\renewcommand{\mathsf}[1]{\text{\normalfont\sffamily#1}}
\def\l@subsection{\@tocline{2}{0pt}{2.5pc}{5pc}{}}
\newcommand{\nmcl}[3][a]{%
  	\begingroup\edef\x{\endgroup
  	\unexpanded{\nomenclature[#1]{#2}}%
    	{\unexpanded{#3} \hfill$\mathsection$\csname thesubsection\endcsname}}\x
    	}
\newcommand\RedeclareMathOperator{%
  \@ifstar{\def\rmo@s{m}\rmo@redeclare}{\def\rmo@s{o}\rmo@redeclare}%
}
\newcommand\rmo@redeclare[2]{%
  \begingroup \escapechar\m@ne\xdef\@gtempa{{\string#1}}\endgroup
  \expandafter\@ifundefined\@gtempa
     {\@latex@error{\noexpand#1undefined}\@ehc}%
     \relax
  \expandafter\rmo@declmathop\rmo@s{#1}{#2}}
\newcommand\rmo@declmathop[3]{%
  \DeclareRobustCommand{#2}{\qopname\newmcodes@#1{#3}}%
}
\newcommand{\tA}{\mathbf{A}}
\newcommand{\tB}{\mathbf{B}}
\newcommand{\tC}{\mathbf{C}}
\newcommand{\noop}[1]{}
\newcommand{\alphab}{\overline{\alpha}}
\newcommand{\tif}{\textup{if }}
\newcommand{\ui}{\underline{i}}
\theoremstyle{plain}
\newtheorem{thm}{Theorem}[section]
\newtheorem{prop}[thm]{Proposition}
\newtheorem{lem}[thm]{Lemma}
\newtheorem{cor}[thm]{Corollary}
\newtheorem{conj}[thm]{Conjecture}
\newtheorem*{main}{Main results}
\theoremstyle{definition}
\newtheorem{defn}[thm]{Definition}
\theoremstyle{remark}
\newtheorem{rmk}[thm]{Remark}
\newtheorem{expl}[thm]{Example}
\Crefname{thm}{Theorem}{Theorems}
\Crefname{lem}{Lemma}{Lemmas}
\Crefname{prop}{Proposition}{Propositions}
\Crefname{cor}{Corollary}{Corollaries}
\Crefname{conj}{Conjecture}{Conjectures}
\Crefname{defn}{Definition}{Definitions}
\Crefname{notation}{Notation}{Notations}
\Crefname{rmk}{Remark}{Remarks}
\Crefname{expl}{Example}{Examples}
\numberwithin{equation}{section}
\newcommand{\dmerge}[4]{
	\draw (#1,#2) .. controls (#1,#4*0.5+#2*0.5) and (#3*0.5+#1*0.5,#4*0.5+#2*0.5) .. (#3*0.5+#1*0.5,#4);
	\draw (#3,#2) .. controls (#3,#4*0.5+#2*0.5) and (#3*0.5+#1*0.5,#4*0.5+#2*0.5) .. (#3*0.5+#1*0.5,#4);
}
\newcommand{\dsplit}[4]{
	\draw (#3*0.5+#1*0.5,#2) .. controls (#3*0.5+#1*0.5,#4*0.5+#2*0.5) and (#3,#4*0.5+#2*0.5) .. (#3,#4);
	\draw (#3*0.5+#1*0.5,#2) .. controls (#3*0.5+#1*0.5,#4*0.5+#2*0.5) and (#1,#4*0.5+#2*0.5) .. (#1,#4);
}
\newcommand{\opbox}[5]{
	\draw [fill=white] (#1,#2) rectangle (#3,#4);
	\node[] at (#3*0.5+#1*0.5,#4*0.5+#2*0.5) {#5};
}
\newcommand{\ntxt}[3]{
	\node[text height=1.2ex,text depth=.25ex] at (#1,#2) {#3};
}
\newcommand{\crosin}[4]{
	\draw (#1,#2) .. controls (#1,#4*0.6+#2*0.4) and (#3,#4*0.4+#2*0.6) .. (#3,#4);
	\draw (#3,#2) .. controls (#3,#4*0.6+#2*0.4) and (#1,#4*0.4+#2*0.6) .. (#1,#4);
}
\newcommand{\curline}[4]{
  \draw (#1,#2) .. controls (#1,#4*0.6+#2*0.4) and (#3,#4*0.4+#2*0.6) .. (#3,#4);
}
\newcommand{\curlineth}[4]{
  \draw [very thick] (#1,#2) .. controls (#1,#4*0.6+#2*0.4) and (#3,#4*0.4+#2*0.6) .. (#3,#4);
}
\title{Schurification of polynomial quantum wreath products}
\author{Chun-Ju Lai}
\address[Chun-Ju Lai]{Institute of Mathematics, Academia Sinica, Taipei 106319, Taiwan}
\email{cjlai@gate.sinica.edu.tw}
\author{Alexandre Minets}
\address[Alexandre Minets]{Max Planck Institute for Mathematics, Bonn, Germany}
\email{minets@mpim-bonn.mpg.de}
\curraddr{Mathematisches Institut, University of Bonn, Endenicher Allee 60, 53115 Bonn, Germany}
\email{aminets@math.uni-bonn.de}
\begin{document}

\begin{abstract}
We study the Schur algebra counterpart of a vast class of quantum wreath products.
This is achieved by developing a theory of twisted convolution algebras, inspired by geometric intuition.
In parallel, we provide an algebraic Schurification via a Kashiwara--Miwa--Stern-type action on a tensor space.
We give a uniform proof of Schur duality, and construct explicit bases of the new Schur algebras.
This provides new results for, among other examples, Vign\'eras' pro-$p$ Iwahori Hecke algebras of type $A$, degenerate affine Hecke algebras, Kleshchev--Muth's affine zigzag algebras, and Rosso--Savage's affine Frobenius Hecke algebras.  
\end{abstract}
%\date{\today}
\subjclass[2020]{Primary: 20C08, 20G43. Secondary: 17B37}
\maketitle

\section{Introduction}\label{sec:intro}

%=============================================
\subsection{Background}
%=============================================
Consider algebras over a field $\bbk$.
Following  \cite{kleshchev2022schurifying}, by \textit{Schurification} we mean a procedure that, given an algebra $\bfA$, produces a new algebra $\bfS(\bfA)$ which enjoys favorable properties similar to the classical Schur algebra, e.g., the double centralizer property, and the existence of functors that relate the representation theory of $\bfA$ and of $\bfS(\bfA)$.

Let $\cH_q(\fkS_d)$ be the Hecke algebra of the symmetric group $\fkS_d$ with $q\in \bbk^\times$. 
One instance of Schurification is the well-known Dipper--James' construction of the $q$-Schur algebras \cite{dipper1989q}
\[
\bfS^{\mathrm{DJ}}(\cH_q(\fkS_d)) \equiv S_q(n,d) \coloneqq \End_{\cH_q(\fkS_d)}\left(
{\textstyle\bigoplus_{\lambda\in\Lambda_{n,d}}} x_{\lambda} \cH_q(\fkS_d)
\right),
\]
in terms of permutation modules. 
Another instance is Beilinson--Lusztig--MacPherson's realization of $q$-Schur algebra via convolution algebras~\cite{beilinson1990geometric}, later generalized by Pouchin~\cite{pouchinGeometricSchurWeyl2009}:
\[
\bfS^{\mathrm{BLM}}(\cH_q(\fkS_d)) \equiv \bbk_{\mathrm{GL}_d(\bbF_q)}(Y_{n,d} \times Y_{n,d}),
\]
where $Y_{n,d}$ is the (finite) set of $n$-step partial flags in $\bbF_q^d$, and $\bbk_{\mathrm{GL}_d(\bbF_q)}(-)$ is the space of $\mathrm{GL}_d(\bbF_q)$-invariant $\bbk$-valued functions.
The two constructions can be identified~\cite{dipper1991q, grojnowski1992bases}:
\[
{\textstyle \bigoplus_{\lambda\in\Lambda_{n,d}}} x_{\lambda} \cH_q(\fkS_d)
\equiv (\bbk^n)^{\otimes d}
\equiv \bbk_{\mathrm{GL}_d}(Y_{n,d}).
\]
Each construction has its own merits. 
The convolution algebra approach usually accounts for positivity behaviors;
while Dipper--James' approach involves Coxeter group combinatorics, and allows potential generalizations to the case of unequal parameters. 

Schurification (and further development) for these flavors of Hecke algebras of various types has been studied intensively; see e.g.~\cite{bao2018new, bao2018geometric, lai2021schur, fan2015geometric, du2024q} for type B/C/D, \cite{ginzburg1994quantum, lusztig1999aperiodicity, du2015quantum} for affine type A, \cite{bao2018multiparameter, fan2020affine, fan2023affine, chen2024affine} for affine type B/C/D.
Note that the works on affine types use Coxeter presentation instead of Bernstein--Lusztig presentation, 
and hence do not generalize in an obvious way to certain interesting variants, e.g. the quantum wreath products $B \wr \cH(d)$ introduced in \cite{lai2024quantum}.

To our knowledge, only partial results are obtained regarding Schurification for algebras defined via the Bernstein--Lusztig presentation (see \cite{Kashiwara1995DecompositionOF, fan2020quantum}).
It is an interesting question whether one can extend the theory of Schurification to these algebras.
If such an algebraic theory exists, does it admit a geometric counterpart in terms of the convolution algebras? 
In this paper, we provide affirmative answers to both questions, based on a new construction of convolution algebras with a twist, and a Demazure-type operator twisted by weak Frobenius elements.
%=============================================
\subsection{An overview}
%=============================================
In this paper, we construct Schurification 
for algebras which admit the Bernstein--Lusztig presentation.
Such algebras include
the affine Hecke algebras for $\mathrm{GL}_d$, 
their degenerate, 0-Hecke, and nil-Hecke variants,
Kleshchev--Muth's affine zigzag algebras \cite{KM_AZAI2019},
Vign\'eras' pro-$p$ Iwahori Hecke algebras $\cH(q_s, c_s)$ \cite{Vigneras_2016} for $\mathrm{GL}_d(\bbQ_p)$
(which are isomorphic to the affine Yokonuma algebras introduced by Chlouveraki--d'Andecy~\cite{10.1093/imrn/rnv257}),
certain Rosso--Savage's affine Frobenius Hecke algebras~\cite{savage2020affine,rosso2020quantum},
and Rees affine Frobenius Hecke algebras considered in an ongoing work by Mathas--Stroppel~\cite{MS}.

Precisely speaking, we consider a family of quantum wreath products $B\wr \cH(d)$ (which we call of {\em polynomial type}, or PQWP), in which the base algebra $B = F[x]$ (or $F[x^{\pm1}]$) is the ring of (Laurent) polynomials over a $\bbk$-algebra $F$.
Typically, $F$ is either the ground field $\bbk$, the group algebra $\bbk[t]/(t^m-1)$ of a cyclic group, or the cohomology ring of a smooth variety, e.g. $H^*(\bbP^n) = \bbk[c]/(c^{n+1})$.
The parameters $(S, R, \sigma, \rho)$ for such PQWPs are of the form
\[
S = \Delta^{10}-\Delta^{01},
\quad
R \in (Z(F\otimes F))^{\fkS_2},
\quad
\sigma = \mathrm{flip},
\quad
\rho = \partial^\beta,
\]
where  $\beta = \sum_{0\leq i,j \leq 1}\Delta^{ij}(x^i\otimes x^j)$ for some weak Frobenius elements $\Delta^{ij} \in (F\otimes F)^{\fkS_2}$, and $\partial^\beta$ is the Demazure operator twisted by $\beta$ (see \eqref{def:twistedDemazure}).

On the other hand, let $X$ be a finite set equipped with $\fkS_d$-action, and consider \textit{twisted}, $\fkS_d$-equivariant convolution algebras of functions on $X\times X$ valued in $\cR \coloneqq B^{\otimes d}$, where the product is given by
\[
  (f*g)(x,y) \coloneqq \sum\nolimits_{z\in X}f(x,z)e(z)^{-1}g(z,y) 
\]
for some $e: X\to \cR$.

Recall that a uniform proof of Schur duality for quantum wreath products was provided in~\cite[\S7.1--5]{lai2024quantum} under strict assumptions, including finite-dimensionality of the base algebra $B$.
In the present paper, our Schurification allows a uniform proof for the aforementioned quantum wreath products with infinite-dimensional base algebras.

\begin{main}
Let $B\wr \cH(d)$ be a quantum wreath product of polynomial type (see~\cref{def:PQWP})
satisfying conditions \eqref{eq:C1}--\eqref{eq:C3} of~\cref{sec:PQWPconv}.
\begin{enumerate}[(A)]
\item 
{[\cref{prop:PQWP-to-TCA}]}
There is an embedding $B\wr \cH(d) \hookrightarrow \cR^{(e)}_{\fkS_d}(\fkS_d\times \fkS_d)$ into a twisted convolution algebra, with the twist given by 
\[
e(z) = z\left(\prod\nolimits_{1\leq i < j \leq d} (\sigma(\beta)_{ij}(x_i-x_j) - \sigma(\alpha)_{ij}(x_i-x_j)^2)\right).
\]
In particular, 
$H_i \mapsto \xi_{1, \beta_i/(x_i - x_{i+1})} + \xi_{s_i, \alpha_i + \beta_i/(x_i - x_{i+1})}$.
\item 
{[\cref{thm:SW-invertible,cor:DCP,thm:strong-DCP}]}
There is a Schurification of $B\wr \cH(d)$ via twisted convolution algebras such that the Schur counterpart, i.e., the {\bf coil}\footnote{
\label{namingSchur}
``Wreath'' is reserved for the centralizer algebra. Both ``coil'' and ``laurel'' evoke the skeletal shape of wreath, with laurel being slightly thicker.} 
 Schur algebra
\[
\bfS^{\mathrm{BLM}}(B\wr \cH(d)) \coloneqq \cR^\cT_{G}(Y\times Y)
\]
has a Schur(--Weyl) duality with $B\wr \cH(d)$, when the elements~\eqref{eq:mlambda} are invertible.
When invertibility fails, Schur duality continues to hold for a slightly bigger {\bf laurel}\footref{namingSchur} Schur algebra $\overline{\bfS}^{\mathrm{BLM}}$.
\item 
{[\cref{thm:SW2,cor:Morita}]}
There is another Schurification of $B\wr \cH(d)$ via permutation modules such that Schur duality holds for the corresponding {\bf wreath}\footref{namingSchur} Schur algebra 
\[
\bfS^{\mathrm{DJ}}(B\wr \cH(d)) \coloneqq \End_{B\wr \cH(d)}(\textstyle{\bigoplus_{\lambda\in\Lambda_{n,d}} M^\lambda})
\equiv
\End_{B\wr \cH(d)}(V_n^{\otimes d}),
\]
provided that $n \geq d$.
The algebras $\overline{\bfS}^{\mathrm{BLM}}$ and ${\bfS}^{\mathrm{DJ}}$ are Morita-equivalent.
\item 
{[\cref{prop:Thetabasis}]}
There exists an explicit basis $\{\theta_{A,P}\}$ of these Schur algebras, where $A$ lies in the set $\Theta_{n,d}$ of $n$-by-$n$ matrices with non-negative integer entries summing up to $d$, and $P$ is a partially symmetric polynomial in $B^{\otimes d}$.
\end{enumerate} 
\end{main}
While the conditions \eqref{eq:C1} and \eqref{eq:C3} are quite restrictive, we only expect to be able to remove~\eqref{eq:C2}; see the discussion in \cref{subs:further-generalizations}.

In words, (A) provides a non-trivial convolution algebra realization of algebras defined via Bernstein--Lusztig presentations, for the first time.
(B) generalizes Pouchin's Schur duality theorem \cite{pouchinGeometricSchurWeyl2009} for $\bbC$-valued functions. Moreover, our Schur duality results hold for more general ground fields.
One should think of coil Schur algebra as a standard integral form, and of laurel Schur as its divided power version. 
In (C), we identify the convolution algebra with a purely combinatorial construction in the sense of Dipper and James, and match it with a generalization of the Fock space construction~\cite{Kashiwara1995DecompositionOF} by Kashiwara, Miwa, and Stern.
The basis $\{\theta_{A,P}\}$ in (D) is a generalization of the Dipper--James basis $\{\theta_{\lambda,\mu}^g\}$ of the $q$-Schur algebra, and is related to the ``chicken-foot'' basis in~\cite{song2024aaffine, song2024baffine}.
A similar basis also appears in the work~\cite{DKM} of {Davidson--Kujawa--Muth}.

Just as that there is a cyclotomic version of Rosso--Savage's affine Frobenius Hecke algebras (see~\cite[Section 4]{rosso2020quantum}), we anticipate that there is also a unifying approach to the cyclotomic version of polynomial quantum wreath products.
While it is beyond the scope of the present article, we plan to return to this in the future.

%=============================================
\subsection{Applications}
%=============================================
Let us highlight some applications that we find exciting; see \cref{sec:Appl} for details.
%-----------------------
\subsubsection*{Imaginary Strata of Affine KLR Algebras}
%-----------------------
Recall that quantum groups are categorified by quiver Hecke algebras.
In particular, the study of PBW bases of affine quantum groups categorifies to the study of stratification of quiver Hecke algebras of affine type.
While this was carried out in characteristic $0$ in~\cite{KM_AZAI2019}, the general case remains mysterious, the complicated part being the computation of the so-called \textit{imaginary} strata.
We propose that (idempotent truncations of) the coil Schur algebras $\bfS^{\mathrm{BLM}}$ describe the imaginary strata in any characteristic (\cref{ssec:MM-ex}).

%-----------------------
\subsubsection*{Representation Theory of $p$-adic Groups}
%-----------------------
The pro-$p$ Iwahori Hecke algebra (\cref{ssec:pro-p}) and its representation theory plays an important role in the representation theory of $p$-adic groups, 
especially when one considers representations in characteristic $p$~\cite{Vigneras_2016, ollivier2010parabolic, abe2019modulo}, 
or metaplectic covers of $p$-adic groups~\cite{gao2024genuine}.
In the latter case, the pro-$p$ Iwahori Hecke algebra and its Gelfand--Graev module encodes information about certain metaplectic Whittaker functions, see also~\cite{buciumas2022metaplectic}.
We expect our theory for $\overline{\bfS}^{\mathrm{BLM}}$ and $\bfS^{\mathrm{DJ}}$ to be useful to understand Schurification arising
from~\cite{gao2024genuine}.

%=============================================
\subsection{Organization}
%=============================================
In \cref{sec:pre}, we recall the definition of convolution algebras, as well as the combinatorics used in the Dipper--James construction. 
We also remind readers the definition of QWP and the conditions for it to have a PBW basis.
In \cref{sec:PQWP}, we introduce twisted Demazure operators, and use them to define the class of quantum wreath products of polynomial type.
We show PQWPs afford a PBW basis.
In \cref{sec:DCP}, we introduce twisted convolution algebras, and prove the Schur duality for their certain sublattices.
In \cref{sec:SD}, we realize PQWPs as subalgebras of twisted convolution algebras, and hence deduce the Schur duality for the coil Schur algebras under an invertibility assumption on certain factorial-like expressions. 
The assumption is removed in \cref{sec:SSWD} for the price of replacing the Schur algebra with a larger laurel Schur algebra.
In \cref{sec:LNX-compare}, we prove the Schur duality for the wreath Schur algebra in the sense of Dipper--James, and then compare these approaches. 
Finally, we summarize particular cases of new results in \cref{sec:Appl}.
 
\subsection*{Acknowledgments}
We would like to thank Valentin Buciumas and Catharina Stroppel for useful discussions, and Stephen Doty for remarks about a preliminary version.
Research of the first-named author was supported in part by NSTC grants 113-2628-M-001-011 and the National Center of Theoretical Sciences.
This collaboration started during the workshop ``Representation Theory of Hecke Algebras and Categorification'' at OIST, Japan.
The authors are grateful to Max Planck Institute for Mathematics in Bonn and Academia Sinica for their hospitality and financial support.
%=============================================
\medskip
\section{Prerequisites}\label{sec:pre}
%=============================================
%=============================================
\subsection{Schurification via convolution algebras}\label{sec:ConvAlg}
%=============================================
Let $G$ be a finite group acting on a finite set $X$, and $\cR$ a unital ring equipped with a $G$-action and free as a $\bbk$-module.
	\nmcl[Rcal]{$\cR$}{A unital ring equipped with a $G$-action and free as a $\bbk$-module.}%
Denote by $\cR_G(X)$ the set of $G$-equivariant $\cR$-valued functions on $X$.
		\nmcl[RcalGX]{$\cR_G(X)$}{The set of $G$-equivariant $\cR$-valued functions on $X$.}%
Then, the set $\cR_G(X\times X)$ of $G$-equivariant $\cR$-valued functions on $X\times X$ is a unital associative algebra, with multiplication given by convolution:
\begin{equation}\label{eq:conv}
  (f*g)(x,y) = \sum\nolimits_{z\in X}f(x,z)g(z,y). 
\end{equation}
Such convolution algebras and the corresponding Schur duality have been systematically studied in \cite{pouchinGeometricSchurWeyl2009}.

Let $\cR = \bbk$ with the trivial $G$-action.
Fix a prime power $q$.
In the case of $G = \mathrm{GL}_d(\bbF_q)$ acting on the set $X = Y_d$ of complete flags in $\bbF_q^d$, the convolution algebra $\bbk_G(X\times X)$ realizes the specialization of the generic Hecke algebra $\cH_{q}(\fkS_d)$ at the prime power $q$.
A well-known Schurification of $\cH_q(\fkS_d)$, due to Beilinson--Lusztig--MacPherson~\cite{beilinson1990geometric}, proceeds by replacing $Y_d$ with the set of $n$-step partial flags in $\bbF_q^d$. 
This produces an algebra
$\bbk_G(Y_{n,d}\times Y_{n,d})$ with monomial and canonical bases, which are indexed by the set of $G$-orbits in $Y_{n,d}\times Y_{n,d}$.
Note that this set is naturally identified with the set $\Theta_{n,d}$ of $n$-by-$n$ matrices with non-negative integer coefficients that add up to $d$.

The aforementioned bases can be constructed from the basis consisting of the following characteristic functions:
\begin{equation}
 \xi_{\pi}\in \cR(Y_{n,d}\times Y_{n,d}),\qquad \xi_{\pi}(x,y) = \sum_{g\in G/\Stab_G(\pi)} \delta_{g\pi, (x,y)}
 = \begin{cases}
 1 &\textup{if } (x,y) \in G\cdot\pi;
 \\
 0 &\textup{otherwise},
 \end{cases}
\end{equation}
where $\pi$ runs over a fixed choice of representatives of the set of $G$-orbits in $Y_{n,d}\times Y_{n,d}$.

The convolution algebra $\cR_G(Y_{n,d}\times Y_{n,d})$ is isomorphic to the (prime power specialization of the) $q$-Schur algebra $S_q(n,d)$ of Dipper--James~\cite{dipper1989q}.

%=============================================
\subsection{Schurification via permutation modules}\label{sec:perm}
%=============================================
Let us recall the combinatorics used in~\cite{dipper1989q}.
Denote the simple transpositions by $s_i = (i, i+1) \in \fkS_d$.
For the Hecke algebra $\cH_q(\fkS_d)$, denote by $\{T_w\}_{w\in \fkS_d}$ its standard basis with multiplication rules determined by the quadratic relation $T_i^2 = (q-1)T_i + q$.
Let $\Lambda_{n,d}$ be the set of (weak) compositions $\lambda = (\lambda_1, \dots, \lambda_n)$, $\lambda_i\geq 0$ of $d$ into $n$ parts.
	\nmcl[Lambdand]{$\Lambda_{n,d}$}{The set of weak compositions of $d$ into $n$ parts.}%   
Denote by $\fkS_\lambda$ the corresponding Young subgroup $\fkS_{\lambda_1} \times \dots \times \fkS_{\lambda_n} \subseteq \fkS_d$,
and let $\fkS^\lambda$ and ${}^\lambda\fkS$ be the sets of shortest left and right coset representatives of $\fkS_\lambda \subseteq \fkS_d$, respectively.
When it is convenient, we identify each representative with the coset:
\[
  \fkS^\lambda \equiv \fkS_d/\fkS_\lambda, \qquad {}^\lambda\fkS \equiv \fkS_\lambda\backslash\fkS_d.
\]
%----------------------------------------------------------------------------------------------------------
The set $\Theta_{n,d}$ can be identified with the set of triples $(\lambda, g, \mu)$ where $\lambda, \mu \in \Lambda_{n,d}$ are the column/row sum vectors of $A$, respectively, and $g \in {}^\lambda\fkS^\mu \coloneqq {}^\lambda\fkS \cap \fkS^\mu$ is the shortest representative in the double coset $\fkS_\lambda g \fkS_\mu$ such that $a_{ij} = {}^\#(\fkI_i^\lambda \cap g \fkI_j^\mu)$ for all $i,j$, where
\[
\fkI_i^\lambda \coloneqq \{ \lambda_1 + \dots + \lambda_{i-1} + 1, \ \lambda_1 + \dots + \lambda_{i-1} + 2, \ \dots,  \ \lambda_1 + \dots + \lambda_i\}.
\]
Let $G\subseteq \fkS_d$ be a subset with the unique longest element $w_\circ^G$.  
		\nmcl[wcircG]{$w_\circ^G$}{The unique longest element in the subset $G\subseteq \fkS_d$.}%
In particular, write $w_\circ^\lambda \coloneqq w_\circ^{\fkS_\lambda}$ and $w_\circ^A \coloneqq w_\circ^{\fkS_\lambda g \fkS_\mu}$, where $A \equiv (\lambda, g, \mu)$.
The following facts on symmetric groups are well-known, see e.g.~\cite{deng2008finite}:
\begin{lem}\label{lem:doublecoset}
Suppose that $A \equiv (\lambda, g, \mu)$. Then,
\begin{enumerate}[(a)]
\item There is a unique strong composition $\delta^c =\delta^c(A) \in \Lambda_{n', d}$ for some $n'$ such that
$\fkS_{\delta^c} = g^{-1} \fkS_\lambda g \cap \fkS_\mu$.
Moreover, $\delta^c$ is obtained by column reading of nonzero entries of $A$.
	\nmcl[deltac]{$\delta^c(A)$}{The composition obtained by column reading of  $A$.}%
\item
There is a unique strong composition $\delta^r =\delta^r(A) \in \Lambda_{n', d}$ for some $n'$ such that
$\fkS_{\delta^r} = g \fkS_\mu g^{-1} \cap \fkS_\lambda$.
Moreover, $\delta^r = \delta^c({}^t A)$, and is obtained from row reading of nonzero entries of $A$.
	\nmcl[deltar]{$\delta^r(A)$}{The composition obtained by row reading of $A$.}%
\item Write $\delta = \delta^c(A)$ and $G = {}^{\delta}\fkS_\mu$. Then, 
$\fkS_\lambda g \fkS_\mu = \{ w ~|~ g \leq w \leq w_\circ^A \}$, in which the longest element is
$w_\circ^A = w_\circ^\lambda g w_\circ^G$, where $w_\circ^G = w_\circ^{\delta} w_\circ^\mu$ with $\ell(w_\circ^G) =  \ell(w_\circ^\mu)- \ell(w_\circ^{\delta})$.
In other words, the map $\kappa:\fkS_\lambda \times ({}^{\delta^c}\fkS_\mu) \to \fkS_\lambda g \fkS_\mu$, $(x,y)\mapsto xgy$ is a bijection satisfying $\ell(xgy) = \ell(x) + \ell(g) + \ell(y)$.
\end{enumerate}
\end{lem}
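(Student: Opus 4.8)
The plan is to deduce all three parts from the standard dictionary between $(\fkS_\lambda,\fkS_\mu)$-double cosets in $\fkS_d$ and the matrix $A$, for which I would refer to \cite{deng2008finite}. The two facts I would isolate at the outset are: (i) $g\in{}^\lambda\fkS^\mu$ being the minimal element of its double coset is equivalent to $g$ being increasing on each block $\fkI_j^\mu$ and $g^{-1}$ being increasing on each block $\fkI_i^\lambda$; and (ii) $g^{-1}\fkS_\lambda g$ is the stabiliser of the set partition $\{g^{-1}\fkI_i^\lambda\}_i$ of $\{1,\dots,d\}$, while $\fkS_\mu$ is the stabiliser of $\{\fkI_j^\mu\}_j$, so that $g^{-1}\fkS_\lambda g\cap\fkS_\mu$ is the stabiliser of their common refinement, whose nonempty blocks are exactly the sets $g^{-1}\fkI_i^\lambda\cap\fkI_j^\mu$ (of size $a_{ij}$).

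For part (a), the one step that requires genuine work is to check that these blocks are intervals of consecutive integers and that they appear in the order given by reading the columns of $A$ from top to bottom and then left to right. This follows from the minimality of $g$: since $g$ is order-preserving on $\fkI_j^\mu$, it identifies $g^{-1}\fkI_i^\lambda\cap\fkI_j^\mu$ with $\fkI_i^\lambda\cap g(\fkI_j^\mu)$, and the latter is a consecutive run inside the strictly increasing list $g(\fkI_j^\mu)$ because $\fkI_i^\lambda$ is an interval; as $i$ grows these runs move to the right, so pulling back along $g$ exhibits the sets $g^{-1}\fkI_i^\lambda\cap\fkI_j^\mu$ as consecutive subintervals of $\fkI_j^\mu$ ordered by $i$. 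Concatenating over $j=1,2,\dots$ and discarding zero entries yields a strong composition $\delta^c=\delta^c(A)\in\Lambda_{n',d}$ with $\fkS_{\delta^c}=g^{-1}\fkS_\lambda g\cap\fkS_\mu$; uniqueness is automatic since a strong composition is recovered from its Young subgroup by reading off the orbit sizes in order. Part (b) is then (a) applied to the double coset $\fkS_\mu g^{-1}\fkS_\lambda$: its minimal representative is $g^{-1}$ and its matrix is ${}^tA$, so $g\fkS_\mu g^{-1}\cap\fkS_\lambda=\fkS_{\delta^c({}^tA)}$, and $\delta^c({}^tA)$ is precisely the row reading of $A$, which we call $\delta^r$.

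For part (c), since $\fkS_{\delta^c}\subseteq\fkS_\mu$ I would invoke the Deodhar-type coset combinatorics of \cite{deng2008finite}: the double coset decomposes as $\fkS_\lambda g\fkS_\mu=\bigsqcup_{y\in{}^{\delta^c}\fkS_\mu}\fkS_\lambda gy$ with $gy$ the minimal element of $\fkS_\lambda gy$, while $\ell(gy)=\ell(g)+\ell(y)$ for all $y\in\fkS_\mu$ by right-minimality of $g$. Together these show that $\kappa\colon(x,y)\mapsto xgy$ is a bijection $\fkS_\lambda\times({}^{\delta^c}\fkS_\mu)\to\fkS_\lambda g\fkS_\mu$ with $\ell(xgy)=\ell(x)+\ell(g)+\ell(y)$ (a cardinality check, using $|\fkS_{\delta^c}|=|\fkS_{\delta^r}|=|\fkS_\lambda\cap g\fkS_\mu g^{-1}|$, can be used in place of injectivity). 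Choosing $x=w_\circ^\lambda$ and $y$ of maximal length then identifies the unique longest element as $w_\circ^A=w_\circ^\lambda g w_\circ^G$, where $w_\circ^G$ is the longest element of $G={}^{\delta^c}\fkS_\mu$; writing $\fkS_\mu=\fkS_{\delta^c}\cdot{}^{\delta^c}\fkS_\mu$ with length-additivity gives $w_\circ^\mu=w_\circ^{\delta}w_\circ^G$, hence $w_\circ^G=w_\circ^{\delta}w_\circ^\mu$ and $\ell(w_\circ^G)=\ell(w_\circ^\mu)-\ell(w_\circ^{\delta})$. Finally, the identity $\fkS_\lambda g\fkS_\mu=\{w\mid g\le w\le w_\circ^A\}$ is the assertion that a double coset is a Bruhat interval, which I would deduce from the subword property applied to the factorisation $\fkS_\lambda g\fkS_\mu=\fkS_\lambda\cdot(g\fkS_\mu)$, or simply cite from \cite{deng2008finite}. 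I expect no single step to be hard; the only real care needed is to keep the left/right coset conventions and the column-versus-row reading of $A$ consistent throughout.
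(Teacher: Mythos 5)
The paper does not actually prove this lemma: it is stated as a collection of well-known facts with a pointer to \cite{deng2008finite}, so there is no internal argument to compare yours against. Your reconstruction is correct and is the standard one. In (a) you correctly reduce to showing that the blocks $g^{-1}\fkI_i^\lambda\cap\fkI_j^\mu$ (of sizes $a_{ij}$) are consecutive intervals ordered by column reading, and your use of the two minimality properties of $g\in{}^\lambda\fkS^\mu$ (increasing on the $\mu$-blocks, with $g^{-1}$ increasing on the $\lambda$-blocks) does exactly that; (b) as the transpose/inverse of (a) is right, since the matrix of $(\mu,g^{-1},\lambda)$ is ${}^tA$; and in (c) the decomposition $\fkS_\lambda g\fkS_\mu=\bigsqcup_{y\in{}^{\delta^c}\fkS_\mu}\fkS_\lambda gy$ together with $\ell(gy)=\ell(g)+\ell(y)$ and minimality of $gy$ in its left coset gives the bijection $\kappa$ with additive lengths, from which the identification of $w_\circ^A$ and the factorisation $w_\circ^\mu=w_\circ^{\delta}w_\circ^G$ follow as you say.

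One caveat on the last claim of (c): the subword property only yields the inclusion $\fkS_\lambda g\fkS_\mu\subseteq\{w\mid g\le w\le w_\circ^A\}$ (concatenate reduced words for $x\le w_\circ^\lambda$, $g$, $y\le w_\circ^G$ inside the reduced word $w_\circ^\lambda\cdot g\cdot w_\circ^G$ for $w_\circ^A$). The reverse inclusion --- that every $w$ in the Bruhat interval actually lies in the double coset --- does not follow from the subword property alone; one needs, e.g., that the projections onto minimal (double) coset representatives are order-preserving, so that $g\le w\le w_\circ^A$ forces the minimal representative of $\fkS_\lambda w\fkS_\mu$ to equal $g$, or else a direct appeal to the interval theorem in \cite{deng2008finite}. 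Since you explicitly offer the citation as an alternative (which is all the paper itself does), this is a gap in your sketched shortcut rather than in the overall proposal, but if you want a self-contained argument you should supply that monotonicity step.
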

\begin{expl}
If $A\coloneqq\left(\begin{smallmatrix} 1&1 \\  2&0 \end{smallmatrix}\right)$, then 
$\delta^{\textup{c}}(A)$ is obtained from $(a_{11}, a_{21}, a_{12}, a_{22})$ by removing the zeroes, and hence $\delta^c(A) = (1,2,1)$. Similarly, $\delta^{{r}}(A) = (1,1,2)$.
The row sum and column sum vectors of $A$ are $(2,2)$ and $(3,1)$, respectively.
Then, $A \equiv ((2,2), g, (3,1))$ with $g = |1 3 4 2| = s_2 s_3$, since
$\fkI_1^\lambda = \{1,2\}, \fkI_2^\lambda = \{3,4\}, g\fkI_1^\mu = \{1,3,4\}$ and $g\fkI_2^\mu = \{2\}$.
The longest element is $w_\circ^A = (s_1s_3) (s_2s_3) (s_2) (s_2s_1 s_2) = s_1s_3 s_2s_3 s_1 s_2.$
\end{expl}
Recall that $\cH_q(\fkS_d)$ acts on the $d$-fold tensor product of $\bbk^n \equiv \bigoplus_{1\leq i \leq n}\bbk v_i$ by
\begin{equation}
v_f \cdot T_i = \begin{cases}
v_{f\cdot s_i}  &\textup{if } f_i < f_{i+1};
\\
q v_f &\textup{if } f_i = f_{i+1};
\\
q v_{f\cdot s_i}  + (q-1) m_{f} &\textup{if } f_i > f_{i+1},
\end{cases}
\end{equation}
where $v_f = v_{f_1} \otimes \dots \otimes v_{f_d}$, $f = (f_i)_i \in \{1, \dots, n\}^d$, on which $\fkS_d$ acts by place permutation.
As a $\cH_q(\fkS_d)$-module, $(\bbk^n)^{\otimes d}$ can be decomposed into the sum of $q$-permutation modules $x_\lambda \cH_q(\fkS_d)$ over $\lambda \in \Lambda_{n,d}$, where $x_\lambda \coloneqq \sum_{w \in \fkS_\lambda} T_w$.
	\nmcl[xlambda]{$x_\lambda$}{The symmetrizer in $\cH_q(\fkS_d)$ with respect to the Young subgroup $\fkS_\lambda$.}%
The $T_i$-action on $x_\lambda$ is explicit, because one can rewrite the quadratic relation as
$T_i(T_i+1) = q (T_i+1)$, i.e., $(T_i+1)$ is a $q$-eigenvector of $T_i$.
For $A = (\lambda, g, \mu)$ we write $G(A) \coloneqq {}^{\delta^c(A)}\fkS_\mu$.
The basis of the $q$-Schur algebra $S_q(n,d)$ is given by $\{\theta_A\}_{A \in \Theta_{n,d}}$, where the basis elements are right $\cH_q(\fkS_d)$-linear maps given by
\begin{equation}\label{eq:DJelt}
\theta_A: x_\mu \cH_q(\fkS_d) \to x_\lambda \cH_q(\fkS_d),
\quad
x_\mu \mapsto x_A,
\textup{ where }x_A \coloneqq \sum_{w \in \fkS_\lambda g \fkS_\mu} T_w = x_\lambda T_g \sum_{w \in G(A)} T_w.
\end{equation}
The map is well-defined thanks to \cref{lem:doublecoset}.
It is immediate from this construction that $\theta_A$'s are $\cH_q(\fkS_d)$-module homomorphisms.

To sum up, 
there is an identification
$\bbk_G(Y_{n,d}) \equiv \bigoplus_{\lambda\in\Lambda_{n,d}} x_\lambda \cH_q(\fkS_d)\equiv (\bbk^n)^{\otimes d}$ that leads to
 the identification
$\bbk_G(Y_{n,d} \times Y_{n,d}) \equiv S_q(n,d)$.
Moreover, the map $\theta_A \in S_q(n,d)$ is identified with the characteristic function $\xi_\pi \in \bbk_G(Y_{n,d} \times Y_{n,d})$ where $\pi$ is the representative in the orbit corresponding to $A \in \Theta_{n,d}$.

%=============================================
\subsection{Quantum wreath products} 
%=============================================
Let $B$ be a unital associative $\bbk$-algebra, free over $\bbk$ with basis $\{b_i\}_{i\in I}$. 
	\nmcl[B]{$B$}{A unital associative $\bbk$-algebra.}%
Let $d\in \bbZ_{\geq 2}$.
By $Q$ we mean a quadruple $(R,S,\rho,\sigma)\in (B\otimes B)^2 \times (\End_\bbk(B\otimes B))^2$. 
	\nmcl[S]{$S$}{The linear coefficient in the quadratic relation of a QWP.}%
	\nmcl[R]{$R$}{The constant term in the quadratic relation of a QWP.}%	
	\nmcl[sigma]{$\sigma$}{The twist in the wreath relation of a QWP.}%
	\nmcl[rho]{$\rho$}{The lower-order correction term in the wreath relation of a QWP.}%	
We use the following abbreviations, for each $1\leq i\leq d$:
\begin{equation}
\begin{aligned}
&Y_i \coloneqq 1^{\otimes i-1} \otimes Y \otimes 1^{\otimes d-i-k}\in B^{\otimes d} ,
&& Y\in B^{\otimes k+1};
\\
&\phi_i:
B^{\otimes d} \to B^{\otimes d},
\ 
{\textstyle \bigotimes_j b_j \mapsto 
(\bigotimes_{j=1}^{i-1} b_j) \otimes  \phi(b_{i} \otimes b_{i+1}) \otimes (\bigotimes_{j=i+2}^d b_j)},
&&
\phi \in \End_\bbk(B^{\otimes 2}).
\end{aligned}
\end{equation}
For $Y = \sum_{k} a^{(k)} \otimes b^{(k)} \in B\otimes B$, we also write, for $1\leq i<j \leq d$:
\begin{equation}
Y_{i,j} \coloneqq \sum\nolimits_k a_i^{(k)}b_j^{(k)},
\quad
Y_{j,i} \coloneqq \sum\nolimits_k b_i^{(k)}a_j^{(k)} \in B^{\otimes d}.
\end{equation}
In particular, $Y_i \coloneqq Y_{i,i+1}$ and $Y_{i+1, i} = \sigma_i(Y_i)$ if $\sigma : a\otimes b \mapsto b\otimes a$ is the flip map.
\begin{defn}\label{def:QWP}
The {\em quantum wreath product (QWP)} is the 
associative $\bbk$-algebra, denoted by $B \wr \cH(d) = B \wr_Q \cH(d)$, generated by the algebra $B^{\otimes d}$ and Hecke-like generators $H_1, \dots, H_{d-1}$ modulo the following relations, for $1\leq k \leq d-2$, $1\leq i \leq d-1$, $|j-i|\geq 2$, $b\in B^{\otimes d}$:
\begin{align*} 
  &H_k H_{k+1} H_k = H_{k+ 1} H_k H_{k + 1},  \quad H_i H_j = H_j H_i, \tag{braid relations}\\
  &H_i^2 = S_iH_i + R_i,\tag{quadratic relations}\\
  &H_ib = \sigma_i(b)H_i + \rho_i(b).\tag{wreath relations}
\end{align*}
\end{defn}

For any $w\in \fkS_d$ with a reduced expression $w = s_{i_1} \dots s_{i_N}$ we can define an element $H_w \coloneqq H_{i_1} \dots H_{i_N} \in B\wr \cH(d)$.
Note that $H_w$ is independent of the choice of a reduced expression due to the braid relations above.
We say that $B \wr \cH(d)$ \emph{has a PBW basis} if the natural spanning sets $\{ (\textstyle{\bigotimes_{j=1}^d} b_{i_j}) H_w ~|~ i_j \in I, w \in \fkS_d\}$ and $\{ H_w(\textstyle{\bigotimes_{j=1}^d} b_{i_j}) ~|~ i_j \in I, w \in \fkS_d\}$ are linearly independent.
\begin{prop}[{\cite[Theorem~3.3.1]{lai2024quantum}}]
\label{prop:LNX331}
$B \wr \cH(d)$ has a PBW basis if and only if
\[
\textup{Conditions }\eqref{def:wr1} \textup{ -- } \eqref{def:qu1}\textup{ hold, and }\eqref{def:br1}\textup{ -- } \eqref{def:br5}\textup{ hold additionally if }d\geq 3.
\]
Here, the conditions are:
\begin{align}
&\label{def:wr1}\tag{P1}
\sigma(1\otimes 1) = 1\otimes 1, 
\quad \rho(1 \otimes1)=0,
\\
&\label{def:wr2}\tag{P2}
 \sigma(ab) = \sigma(a)\sigma(b),
\quad
\rho(ab) =  \sigma(a)\rho(b) + \rho(a)b,
\\
&\label{def:TTT}\tag{P3}
\sigma(S)S + \rho(S) + \sigma(R) = S^2+R,
\quad
\rho(R) + \sigma(S)R = SR,
\\
&\label{def:qu1}\tag{P4}
r_S\sigma^2+\rho\sigma+\sigma\rho = {l}_{S}\sigma,
\quad
r_R\sigma^2+\rho^2 = {l}_S\rho + {l}_R,
\end{align}
where  ${l_X}, r_X$ for $X\in B\otimes B$ are $\bbk$-endomorphisms defined by  {left and} right multiplication in $B\otimes B$ by $X$, respectively,
\begin{align}
&\label{def:br1}\tag{P5}
\sigma_i\sigma_j\sigma_i = \sigma_j \sigma_i \sigma_j,
\quad \rho_i\sigma_j\sigma_i = \sigma_j \sigma_i \rho_j,
\\
&\label{def:br2}\tag{P6}
\rho_i\sigma_j\rho_i = r_{S_j} \sigma_j \rho_i \sigma_j + \rho_j \rho_i \sigma_j + \sigma_j \rho_i \rho_j,
\\
&\label{def:br3}\tag{P7}
\rho_i\rho_j\rho_i + r_{R_i} \sigma_i \rho_j \sigma_i
= \rho_j\rho_i\rho_j + r_{R_j} \sigma_j \rho_i \sigma_j,
\end{align}
where $\{i,j\}= \{1,2\}$, $r_X$ for $X\in B^{\otimes 3}$ is understood as right multiplication in $B^{\otimes 3}$ by $X$,
\begin{align}
\label{def:br4}&\tag{P8}
S_i = \sigma_j\sigma_i(S_j), \quad R_i = \sigma_j\sigma_i(R_j),
\quad
\rho_j\sigma_i(S_j) = 0 = \rho_j\sigma_i(R_j),
\\
\label{def:br5}& \tag{P9}
\sigma_j\rho_i(S_j) S_j +\rho_j\rho_i(S_j) + \sigma_j\rho_i(R_j) = 0 =  \rho_j\rho_i(R_j) + \sigma_j\rho_i(S_j) R_j,
\end{align}
where $\{i,j\}= \{1,2\}$. 
\end{prop}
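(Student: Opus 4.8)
The statement is an equivalence, so I would prove the two implications separately but with the same bookkeeping: a short list of ``small'' elements of $B\wr\cH(d)$, each expanded in two ways. For the ``only if'' direction I would use the assumed linear independence of $\{(\textstyle\bigotimes_j b_{i_j})H_w\}$ to read off \eqref{def:wr1}--\eqref{def:br5} from those expansions. For the ``if'' direction I would build an explicit module realizing the would-be PBW basis and conclude that the spanning sets of \cref{def:QWP} cannot collapse. I expect $d=2$ — no braid relations, only \eqref{def:wr1}--\eqref{def:qu1} at play — to be the base case, and $d\ge4$ to follow formally from $d=3$, since \eqref{def:br1}--\eqref{def:br5} concern a single adjacent pair while operators attached to disjoint tensor slots commute automatically.

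\textbf{Necessity.} Assume $B\wr\cH(d)$ has a PBW basis. First, $H_i\cdot 1=H_i$ with the wreath relation forces \eqref{def:wr1}. Next I would expand $H_i\cdot(ab)$ for $a,b$ placed in slots $i,i+1$ — once by the wreath relation applied to the product $ab$, once by applying it to $a$ and then to $b$ — and compare the coefficients of $H_i$ and of $1$; this yields multiplicativity of $\sigma$ and the $\sigma$-twisted Leibniz rule for $\rho$, i.e.\ \eqref{def:wr2}. Expanding $H_i^2\cdot b$ two ways (via $H_i^2=S_iH_i+R_i$ versus the wreath relation applied twice) gives \eqref{def:qu1}, and expanding $H_i^3$ as $H_i\cdot H_i^2$ against $H_i^2\cdot H_i$ gives \eqref{def:TTT}. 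For $d\ge3$, applying the braid relation $H_iH_jH_i=H_jH_iH_j$ ($j=i\pm1$) to an element of $B^{\otimes d}$ and comparing the coefficients of $H_u$ for $u\in\langle s_i,s_j\rangle$ gives \eqref{def:br1}--\eqref{def:br3}, while pushing the quadratic relation $H_j^2=S_jH_j+R_j$ through the braid move (e.g.\ expanding $H_iH_jH_i^2$ two ways) gives \eqref{def:br4}--\eqref{def:br5}. Every such comparison is legitimate precisely because $\{(\textstyle\bigotimes_j b_{i_j})H_w\}$ is assumed to be a $\bbk$-basis.

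\textbf{Sufficiency.} Assume \eqref{def:wr1}--\eqref{def:br5}. Let $M$ be the free $\bbk$-module on the symbols $(\textstyle\bigotimes_j b_{i_j})\otimes w$, identified with $B^{\otimes d}\otimes_\bbk\bbk[\fkS_d]$, with typical element written $b\otimes w$ for $b\in B^{\otimes d}$. I would let $B^{\otimes d}$ act by $b'\cdot(b\otimes w)=(b'b)\otimes w$ and define
\[
H_i\cdot(b\otimes w)=
\begin{cases}
\sigma_i(b)\otimes s_iw+\rho_i(b)\otimes w, & \ell(s_iw)>\ell(w),\\
\bigl(\sigma_i(b)S_i+\rho_i(b)\bigr)\otimes w+\sigma_i(b)R_i\otimes s_iw, & \ell(s_iw)<\ell(w),
\end{cases}
\]
a formula forced by requiring $H_i\cdot\bigl((\textstyle\bigotimes_j b_{i_j})H_w\cdot v_0\bigr)=\bigl(H_i(\textstyle\bigotimes_j b_{i_j})H_w\bigr)\cdot v_0$ for the intended cyclic vector $v_0=1\otimes e$, together with $H_iH_w=H_{s_iw}$ or $H_iH_w=S_iH_w+R_iH_{s_iw}$ according to whether $\ell(s_iw)>\ell(w)$ or not. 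I would then verify that these operators satisfy every relation of \cref{def:QWP}: the wreath relations by construction using \eqref{def:wr1}--\eqref{def:wr2}; the quadratic relations by a two-case check on $\ell(s_iw)$ using \eqref{def:TTT} and \eqref{def:qu1}; the far commutations $H_iH_j=H_jH_i$ ($|i-j|\ge2$) routinely (disjoint slots, commuting Coxeter generators); and the braid relations by writing $w=u\cdot{}^{W'}w$ with $W'=\langle s_i,s_{i+1}\rangle$ and ${}^{W'}w$ the minimal-length representative of $W'w$, then running a six-case analysis over $u\in W'$, each case reducing to an identity in $B^{\otimes3}$ (embedded in slots $i,i+1,i+2$) supplied by \eqref{def:br1}--\eqref{def:br5}, possibly together with \eqref{def:wr2}--\eqref{def:qu1}. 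This makes $M$ a $B\wr\cH(d)$-module, and since $\bigl((\textstyle\bigotimes_j b_{i_j})H_w\bigr)\cdot v_0=(\textstyle\bigotimes_j b_{i_j})\otimes w$ are pairwise distinct basis vectors of $M$, the family $\{(\textstyle\bigotimes_j b_{i_j})H_w\}$ is $\bbk$-linearly independent in $B\wr\cH(d)$. The independence of $\{H_w(\textstyle\bigotimes_j b_{i_j})\}$ follows by the parallel construction of a right $B\wr\cH(d)$-module on the free $\bbk$-module with basis $\{w\otimes(\textstyle\bigotimes_j b_{i_j})\}$ (equivalently, by passing to a suitable opposite quantum wreath product), so $B\wr\cH(d)$ has a PBW basis.

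\textbf{Main obstacle.} The real work is the braid-relation verification in the sufficiency step — equivalently, resolving the overlap ambiguity $H_iH_{i+1}H_i$ of Bergman's diamond lemma, both against a trailing element of $B^{\otimes d}$ and against a trailing $H_i$. One must carry the $B^{\otimes3}$-valued coefficients (the right-multiplication operators $r_X$) correctly through all six positional cases, in a setting where $\sigma$ and $\rho$ at adjacent slots fail to commute, and recognize each emerging identity as a consequence of \eqref{def:br1}--\eqref{def:br5}. Everything else — the conditions \eqref{def:wr1}--\eqref{def:qu1} and the $d=2$ base case — is comparatively routine bookkeeping.
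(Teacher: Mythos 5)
Your overall strategy is sound and, in substance, matches the proof of the cited result: note that this paper does not prove \cref{prop:LNX331} itself but imports it from \cite{lai2024quantum}, where the argument is a Bergman diamond-lemma computation whose ambiguity resolutions are exactly the coefficient comparisons you list; your faithful-module construction on $B^{\otimes d}\otimes_\bbk \bbk[\fkS_d]$ is the standard equivalent packaging of the same verifications. Your necessity direction is correct: $H_i\cdot 1$, $H_i(ab)$, $H_i^2 b$ and $H_i^3$ give \eqref{def:wr1}--\eqref{def:qu1}, applying the braid relation to $b\in B^{\otimes d}$ and comparing coefficients of $H_u$, $u\in\langle s_i,s_j\rangle$, gives \eqref{def:br1}--\eqref{def:br3}, and resolving $(H_jH_iH_j)H_i$ against $H_iH_j(H_i^2)$ gives precisely \eqref{def:br4}--\eqref{def:br5}. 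The left-handed half of sufficiency is also fine as outlined (your formulas are the ones forced by $H_iH_w=H_{s_iw}$ resp.\ $S_iH_w+R_iH_{s_iw}$, and the wreath/quadratic checks go through with \eqref{def:wr2}--\eqref{def:qu1}), granted the deferred six-case braid verification, which is indeed where all of \eqref{def:br1}--\eqref{def:br5} get used.

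The genuine gap is your final sentence. The paper's definition of a PBW basis requires linear independence of \emph{both} families $\{bH_w\}$ and $\{H_w b\}$, and the second does not follow ``by the parallel construction of a right module'' or ``by passing to a suitable opposite quantum wreath product'': the wreath relation rewrites $H_i b$ but not $bH_i$, so to define a right action on $\bbk[\fkS_d]\otimes B^{\otimes d}$ modelling $H_w b\mapsto w\otimes b$ you would have to express $bH_i$ in the form $\sum_u H_u b_u$, which needs $\sigma$ to be invertible; for the same reason the opposite algebra is a quantum wreath product only when $\sigma$ is invertible. Moreover the implication is not purely formal: take $d=2$, $B=\bbk[x]$, $S=R=0$, $\rho=0$, and $\sigma$ the non-injective unital algebra endomorphism $f(x_1,x_2)\mapsto f(x_1,x_1)$. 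Then \eqref{def:wr1}--\eqref{def:qu1} all hold and $\{bH_w\}$ is a basis, yet $H\,(x\otimes 1)=H\,(1\otimes x)=(x\otimes 1)H\neq 0$, so $\{H_w b\}$ is \emph{not} linearly independent. Hence the right-handed half needs an extra input — injectivity of $\sigma$ suffices, via the triangular leading-term argument $H_w b=\sigma_w(b)H_w+\text{lower terms}$, and it is automatic for every PQWP in this paper since there $\sigma$ is the flip — or one must read the cited theorem in its one-sided form; as written, this step of your proof fails.
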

%=========================================
\medskip
\section{Quantum wreath products of polynomial type}\label{sec:PQWP}
%=========================================
Quantum wreath products cover various examples of deformations of wreath products appearing in literature.
Unfortunately, this notion is rather unwieldy, since it is in some sense the most general definition one can come up with.
In this paper we will only consider a certain class of quantum wreath products, which has the flavor of affine Hecke algebras of type $A$.

\subsection{Twisted Demazure operators}
Let $F$ be a unital finite-dimensional algebra over $\bbk$.
	\nmcl[F]{$F$}{A unital finite-dimensional $\bbk$-algebra.}%
\begin{defn}\label{def:weak-Frobenius}
  A \textit{weak Frobenius} element of $F$ is an element $\Delta\in F\otimes F$ satisfying 
  \[
    (a\otimes b)\Delta = \Delta(b\otimes a)
    \quad
    \textup{for any}
    \quad
    a, b\in F.
  \]
\end{defn}
It is clear that weak Frobenius elements form a vector space, which we denote by $W(F)\subseteq F\otimes F$.
	\nmcl[WF]{$W(F)$}{The set of weak Frobenius elements in $F\otimes F$.}%
Such elements are sometimes called \textit{intertwiners} or \textit{teleporters}. 
We call them weak Frobenius since they are the evaluation at identity of the coproduct of weak Frobenius algebras (see~\cite{costello2003hilbert}).
While usual Frobenius elements (which satisfy an additional non-degeneracy condition) are essentially unique, there can be many linearly independent weak Frobenius elements.
\begin{expl}
  Let $F = \bbk[c]/(c^{n+1})$.
  Then for every $k\geq 0$ the element $\sum_{i+j = n+k} c^i\otimes c^j$ is weak Frobenius.
\end{expl}

In what follows, we identify $F[x]\otimes F[x]$ with $F^{\otimes 2}[x_1,x_2]$, writing $x_1$ for $x\otimes 1$ and $x_2$ for $1\otimes x$.

\begin{defn}\label{def:twistedDemazure}
Let $\beta = \sum_{i,j \geq 0} \Delta^{i,j} (x^i \otimes x^j)$ be an element of $W(F)^{\fkS_2}[x_1,x_2]$, i.e., a polynomial in two variables with coefficients in symmetric weak Frobenius elements in $F$.
	\nmcl[beta]{$\beta$}{A element in $F^{\otimes 2}[x_1,x_2]$ which gives rise to the $\beta$-twisted Demazure.}%
	\nmcl[partialbeta]{$\partial^\beta$}{The $\beta$-twisted Demazure operator.}%	
	\nmcl[Deltaij]{$\Delta^{i,j}$}{An $\fkS_2$-invariant weak Frobenius element.}%		
The \textit{$\beta$-twisted Demazure operator} $\partial^\beta: F^{\otimes 2}[x_1,x_2]\to F^{\otimes 2}[x_1,x_2]$ is given by 
  \[
    \partial^\beta(a\otimes b) = \frac{\beta (a\otimes b) - (b\otimes a)\beta}{x\otimes 1 - 1\otimes x}.
  \]  
\end{defn}
\begin{rmk}
  A similar Demazure operator appears in~\cite[Lemma 4.3]{savage2020affine}.
\end{rmk}
Note that $\partial^\beta$ is well-defined since, writing $a = f'x^k$, $b= f''x^l$ for some $f', f'' \in F$:
\begin{align*}
  \beta (a\otimes b) - (b\otimes a)\beta
& = \sum_{i,j}\Delta^{i,j} (x^i \otimes x^j) (f'\otimes f'') (x^k \otimes x^l) 
- \sum_{i,j}(f''\otimes f') (x^l \otimes x^k)  \Delta^{i,j} (x^i \otimes x^j) 
\\
&= \beta (f'\otimes f'') (x^k \otimes x^l - x^l \otimes x^k)
=  (f''\otimes f') (x^k \otimes x^l - x^l \otimes x^k)\beta,
\end{align*}
and hence $  \partial^\beta(fx^k\otimes gx^l) 
= \beta (f\otimes g) \partial(x^k\otimes x^l) 
= (g\otimes f) \partial(x^k\otimes x^l) \beta
$, or
\begin{equation}\label{eq:partialbeta}
\partial^\beta(fP) = \sigma(f)\partial(P)\beta,
\quad 
\textup{for all}
\quad
f\in F\otimes F, \ P \in \bbk[x_1, x_2],
\end{equation}
where $\sigma:a\otimes b\mapsto b\otimes a$ is the flip map, and $\partial: \bbk[x_1,x_2]\to \bbk[x_1,x_2]$ is the usual Demazure operator.

Let us collect some useful properties of $\partial^\beta$.

\begin{lem}\label{lem:partialbeta}
Suppose that $\beta$ is an element as in \cref{def:twistedDemazure}. Then,
  \begin{enumerate}[(a)]
    \item If $P \in \bbk[x_1,x_2]$, then $\partial^\beta(P) = \beta \partial(P) = \partial(P) \beta$.
    Moreover, $\partial^\beta\sigma(P) = -\partial^\beta(P)$.
    \item If $f \in F\otimes F$, then $\partial^\beta(f) = 0$.
    \item For any $a,b\in (F\otimes F)[x_1,x_2]$ we have $\partial^\beta(ab) = \sigma(a)\partial^\beta(b) + \partial^\beta(a)b$. In other words, $\partial^\beta$ is a $\sigma$-twisted left derivation.
  \end{enumerate}      
\end{lem}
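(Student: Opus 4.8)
The plan is to reduce all three parts to the single identity~\eqref{eq:partialbeta}, namely $\partial^\beta(fP) = \sigma(f)\partial(P)\beta$ for $f\in F\otimes F$ and $P\in\bbk[x_1,x_2]$, combined with three elementary facts already implicit in the setup: the polynomial ring $\bbk[x_1,x_2]$ is central in $F^{\otimes 2}[x_1,x_2]$ (since $x$ is central in $F[x]$); the flip $\sigma$ is an algebra endomorphism of $F^{\otimes 2}[x_1,x_2]$ with $\sigma(1\otimes 1)=1\otimes 1$; and the usual Demazure operator $\partial$ on $\bbk[x_1,x_2]$ satisfies $\partial(1)=0$, $\partial\circ s = -\partial$ where $s$ swaps $x_1,x_2$, and the twisted Leibniz rule $\partial(PQ) = \partial(P)Q + s(P)\partial(Q)$.

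For part (a), I would specialize~\eqref{eq:partialbeta} to $f = 1\otimes 1$, which immediately gives $\partial^\beta(P) = \partial(P)\beta$; since $\partial(P)\in\bbk[x_1,x_2]$ is central, this also equals $\beta\partial(P)$. The antisymmetry $\partial^\beta\sigma(P) = -\partial^\beta(P)$ then follows from $\sigma(P) = s(P)$ together with $\partial\circ s = -\partial$. For part (b), I would specialize~\eqref{eq:partialbeta} to $P=1$, so that $\partial^\beta(f) = \sigma(f)\partial(1)\beta = 0$. (Alternatively, and more directly from the definition: the weak Frobenius condition on the coefficients $\Delta^{i,j}$ yields $\beta(a\otimes b) = (b\otimes a)\beta$ for all $a,b\in F$, so the numerator in $\partial^\beta(f)$ vanishes identically.)

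For part (c), I would first observe that both sides are $\bbk$-bilinear in $(a,b)$, so it suffices to treat $a = fP$ and $b = gQ$ with $f,g\in F\otimes F$ and $P,Q\in\bbk[x_1,x_2]$. Using centrality of $\bbk[x_1,x_2]$ we have $ab = (fg)(PQ)$, so~\eqref{eq:partialbeta} and multiplicativity of $\sigma$ give $\partial^\beta(ab) = \sigma(f)\sigma(g)\,\partial(PQ)\,\beta$. For the right-hand side, \eqref{eq:partialbeta} yields $\partial^\beta(a) = \sigma(f)\partial(P)\beta$, $\partial^\beta(b) = \sigma(g)\partial(Q)\beta$, and $\sigma(a) = \sigma(f)\,s(P)$; after commuting $\beta$ past the degree-zero element $g$ via the weak Frobenius identity $\beta g = \sigma(g)\beta$ and moving the central polynomials around, one gets $\partial^\beta(a)\,b = \sigma(f)\sigma(g)\,\partial(P)Q\,\beta$ and $\sigma(a)\,\partial^\beta(b) = \sigma(f)\sigma(g)\,s(P)\partial(Q)\,\beta$. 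Adding these and invoking the Leibniz rule for $\partial$ recovers $\sigma(f)\sigma(g)\,\partial(PQ)\,\beta$, matching the left-hand side.

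I do not expect a genuine obstacle: the statement is a mechanical consequence of~\eqref{eq:partialbeta}. The only steps needing care are (i) correctly commuting $\beta$ past elements of $F\otimes F$, which is precisely where the $\fkS_2$-symmetry and weak Frobenius property of the $\Delta^{i,j}$ are used, and (ii) keeping track of which tensor factor the swap $s$ acts on when applying the classical twisted Leibniz rule, so that the $\sigma$-twist on the left of the derivation identity in part (c) comes out on the correct side.
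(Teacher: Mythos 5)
Your proposal is correct. Parts (a) and (b) are handled exactly as in the paper: both are immediate specializations of~\eqref{eq:partialbeta} (to $f=1\otimes 1$ and to $P=1$, respectively), together with centrality of $\bbk[x_1,x_2]$ and $\partial\circ s=-\partial$. For part (c) you take a genuinely different, slightly longer route: you reduce by bilinearity to $a=fP$, $b=gQ$, apply~\eqref{eq:partialbeta} to each piece, commute $\beta$ past $g$ via the weak Frobenius identity $\beta g=\sigma(g)\beta$, and then invoke the classical twisted Leibniz rule $\partial(PQ)=\partial(P)Q+s(P)\partial(Q)$ for the ordinary Demazure operator. The paper instead proves (c) in one line directly from the defining fraction, by adding and subtracting $\sigma(a)\beta b$ in the numerator, so that $\beta ab-\sigma(ab)\beta=(\beta a-\sigma(a)\beta)b+\sigma(a)(\beta b-\sigma(b)\beta)$ and the two summands are divisible by $x_1-x_2$ by well-definedness of $\partial^\beta$; this avoids the case reduction and does not explicitly reuse the commutation relation (weak Frobeniusness enters only through well-definedness, established before the lemma). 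Your version has the small merit of making transparent exactly where the $\fkS_2$-symmetric weak Frobenius coefficients and the classical Leibniz rule are used, at the cost of more bookkeeping; both arguments are complete and correct.
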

\begin{proof}
  The first two claims are direct consequences of~\eqref{eq:partialbeta}.
  The last claim follows from a quick computation:
  \begin{align*}
    \partial^\beta(ab)
    &= \frac{\beta ab - \sigma(ab)\beta}{x\otimes 1 - 1\otimes x}
    = \frac{\sigma(a)\beta b- \sigma(a)\sigma(b)\beta + \beta ab - \sigma(a)\beta b}{x\otimes 1 - 1\otimes x}\\
    &= \sigma(a)\partial^\beta(b) + \partial^\beta(a)b.\qedhere
  \end{align*}
\end{proof}

\begin{rmk}
  Note that when $F$ is not commutative, $\partial^\beta$ has no reason to be a right derivation.
\end{rmk}

\subsection{Quantum wreath products of polynomial type}

Let $F$ be a unital finite-dimensional algebra as before, and let $B$ be either $F[x]$ or $F[x^{\pm 1}]$.

\begin{defn}\label{def:PQWP}
  A quantum wreath product $B \wr \cH(d)$, $Q = (R,S,\rho,\sigma)$ is said to be of {\em polynomial type (PQWP)} with respect to the pair $(R, \beta) \in (F\otimes F)^{\fkS_2}\times (B\otimes B)$ if 
\begin{align}  
&\label{eq:A1}\tag{A1}
R\textup{ is central in }F\otimes F;
\\
&\label{eq:A2}\tag{A2} 
\beta = {\textstyle\sum_{0\leq i,j \leq 1}} \Delta^{ij} (x^i \otimes x^j)\in W(F)^{\fkS_2} [x_1, x_2],
\textup{ and }
\Delta^{00}_1\Delta^{11}_2 = \Delta^{01}_1\Delta^{10}_2;
\\
&\label{eq:A3}\tag{A3}
\sigma\textup{ is the flip map, }\rho = \partial^\beta,\textup{ and }S = \Delta^{10}-\Delta^{01}.
\end{align}
\end{defn}

\begin{rmk}
  If $\Delta^{ij} = a^{ij}\Delta$, $a^{ij}\in \bbk$, then the relation $\Delta^{00}_1\Delta^{11}_2 = \Delta^{01}_1\Delta^{10}_2$ in \eqref{eq:A2} is equivalent to $\beta$ factoring as $\beta = \Delta\beta_1\beta_2$, where $\beta_i\in \bbk[x_i]$.
  See the proof of \cref{lem:side-deriving-beta}.
\end{rmk}

\begin{table}[!htbp]
\[
\begin{array}{||c||c|c|c|c||c||c|cccccc}
\hline
B	&R	&\beta	&S	&\alpha	&\textup{PQWP}
\\
\hline
F[x]& 1 & 0& 0& 1 & 
\textup{usual wreath product}
\\
\hline
\bbk[\hbar][x]& 1 & \hbar& 0& 1 & 
\textup{graded affine Hecke}
\\
\hline
\bbk[x]& 0& 1 & 0& 0 & 
\textup{nil-Hecke algebra}
\\
\hline
	\multirow{2}{*}{$\bbk[x^{\pm1}]$}    
	& \multirow{2}{*}{$q$} 
	& (q-1)x_1
	& \multirow{2}{*}{$q-1$} 
	& \multirow{2}{*}{$1$ or $-q$} 
	& \multirow{2}{*}{affine Hecke algebra}
	\\
	\cline{3-3}
           && (1-q)x_2&&&
\\
\hline
\bbk[x]& 0& x_1x_2& 0& 0& 
\begin{tabular}[c]{@{}c@{}}
opposite nil-Hecke\\ algebra $NH^{\downarrow}_d$
\end{tabular}  
\\
\hline
F[x^{\pm1}]& 1& -q\Delta x_2& q\Delta& \textup{may not exist}& 
\begin{tabular}[c]{@{}c@{}}
affine Frobenius \\Hecke algebra \cite{rosso2020quantum} 
\end{tabular}  
\\
\hline
F[\hbar,t][x^{\pm1}]& \hbar^2& -S (x_2+\hbar t)& \eta\tau & \textup{may not exist}& 
\begin{tabular}[c]{@{}c@{}}
Rees affine Frobenius \\Hecke algebra \cite{MS} 
\end{tabular}  
\\
\hline
	\multirow{2}{*}{$\frac{\bbk[c]}{(c^2)}[x^{\pm1}]$}    
	& \multirow{2}{*}{$1$} 
	& \multirow{2}{*}{$c_1+c_2$}
	& \multirow{2}{*}{$0$} 
	& \multirow{2}{*}{$1$} 
	& \textup{affine zigzag algebra}
	\\
           &&&&& \textup{of type $A_1$ \cite{maksimauKLRSchurAlgebras2022a}}
\\
\hline
	\multirow{2}{*}{$\frac{\bbk[t]}{(t^{p-1}-1)}[x^{\pm1}]$}    
	& \multirow{2}{*}{$1$} 
	& Sx_1
	& \multirow{2}{*}{$(q-q^{-1})e$} 
	& \multirow{2}{*}{$(1+q^{-1})e - 1\otimes1$} 
	& \textup{pro-}p \textup{ Iwahori}
	\\
	\cline{3-3}
           && -Sx_2&&& \textup{Hecke algebra}
\\
\hline
\end{array}
\]
\caption{Examples of quantum wreath products of polynomial type}
\label{tab:Rbeta}
\end{table}

\begin{expl}\label{ex:Heckes}
  \begin{enumerate}[(a)]
  \item
    When $(R,\beta) = (1,0)$, we recover the usual wreath product $B\wr \fkS_d$.
  \item 
    Let $F = \bbk$. 
    The following choices of parameters recover various flavors of affine Hecke algebras of type $A$:
  The degenerate affine Hecke algebra (resp. its graded version) are PQWP for $(R,\beta) = (1,1)$ (resp. $(1, \hbar)$) with $B = \bbk[x]$ (resp. $B= \bbk[\hbar][x]$).
  The nil-Hecke algebra is a PQWP for $(0,1)$ with $B=\bbk[x]$.
  For $B = \bbk[x^{\pm1}]$, the type A affine Hecke algebra is a PQWP for $(q, (q-1)x_1)$ or $(q, (1-q)x_2)$, and hence the affine $0$-Hecke algebra is a PQWP for $(0, -x_1)$ or $(0, x_2)$.
  \item
    Let us highlight another curious example.
    Let $B = \bbk[x]$, $R=0$, 
    and $\beta = x_1x_2$.
    The usual Demazure operator satisfies the following relation after extension to Laurent polynomials:
    \[
      \partial x_1^{-1} = x_2^{-1} \partial - (x_1x_2)^{-1}.
    \]
    This tells us that the PQWP for $(R,\beta) = (0,x_1x_2)$ is isomorphic to the subalgebra $NH_d^\downarrow$ of difference operators on $\bbk[x_1^{\pm 1},\ldots, x_d^{\pm 1}]$ generated by Demazure operators and multiplications by $x_i^{-1}$, $1\leq i\leq d$.
    It can be viewed as the ``opposite'' of the usual nil-Hecke algebra $NH_d$.
    One can easily check that $NH_d^\downarrow \not\simeq NH_d$ for $d\geq 2$.
  \item\label{ex:Savage}
    Let $F$ be a Frobenius algebra with Frobenius form $\Delta\in F\otimes F$.
    Setting $B = F[x]$, $R = 1$, $\beta = \Delta$, our PQWP $B\wr \cH_d$ is Savage's affine wreath product algebra~\cite{savage2020affine}.
    If we set $B = \bbk[x^{\pm 1}]$, $R = 1$, $\beta = -qx_2\Delta$, the algebra $B\wr \cH_d$ is isomorphic to Rosso--Savage's affine Frobenius Hecke algebra~\cite{rosso2020quantum}.
    \end{enumerate}
    These examples, as well as further examples from~\cref{sec:Appl}, are summarized in \cref{tab:Rbeta} (see~\cref{sec:PQWPconv} for the meaning of column $\alpha$).
\end{expl}

\begin{prop}\label{prop:basis}
The quantum wreath product of polynomial type with respect to $(R,\beta)$ has a PBW basis.
\end{prop}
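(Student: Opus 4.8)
The plan is to verify the conditions of \cref{prop:LNX331} for the data $Q = (R,S,\rho,\sigma)$ attached to a PQWP, so that the PBW property follows immediately. Since $\sigma$ is the flip map, the purely $\sigma$-conditions are automatic: $\sigma(1\otimes 1) = 1\otimes 1$ and multiplicativity $\sigma(ab) = \sigma(a)\sigma(b)$ are clear, and the braid relation $\sigma_i\sigma_j\sigma_i = \sigma_j\sigma_i\sigma_j$ is the usual $\fkS_3$ relation. For the conditions involving $\rho = \partial^\beta$, the key structural inputs are already in hand: $\partial^\beta(1\otimes 1) = 0$ and the $\sigma$-twisted left Leibniz rule $\partial^\beta(ab) = \sigma(a)\partial^\beta(b) + \partial^\beta(a)b$ from \cref{lem:partialbeta}(c), which together give \eqref{def:wr1} and \eqref{def:wr2}. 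I would next record the explicit formula $\partial^\beta(P) = \beta\partial(P) = \partial(P)\beta$ for $P\in\bbk[x]^{\otimes 2}$ and $\partial^\beta(f) = 0$ for $f\in F\otimes F$, plus $\rho(\beta) = S\beta$, $\rho\sigma(\beta) = -S\beta$ from \cref{cor:PQWPid}; these reduce every remaining relation to a finite computation with $\beta$, $S = \Delta^{10} - \Delta^{01}$, and $R$.

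For \eqref{def:TTT} and \eqref{def:qu1} (the rank-two conditions), I would use $S\in(F\otimes F)^{\fkS_2}$ and $R$ central in $F\otimes F$ (condition \eqref{eq:A1}) to see that $\rho(S) = \rho(R) = 0$ and $\sigma(S) = S$, $\sigma(R) = R$ — this makes the first equation of \eqref{def:TTT} read $S^2 + R = S^2 + R$ and the second $SR = SR$. For \eqref{def:qu1}, apply both sides to an arbitrary monomial $fP$ with $f\in F\otimes F$, $P\in\bbk[x_1,x_2]$, and use \eqref{eq:partialbeta} together with $\partial^2 = 0$ and the identity $\partial(PQ) = \partial(P)\sigma(Q) + P\partial(Q)$ (rather, the Demazure Leibniz rule) to match terms; the appearance of $r_S, r_R, l_S, l_R$ is exactly absorbed by the quadratic relation $H^2 = SH + R$ expanded via the wreath relation. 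The genuinely new content is in the $d\ge 3$ braid-type conditions \eqref{def:br1}--\eqref{def:br5}: here \cref{lem:side-deriving-beta} is the crucial tool, as it packages the three-variable identities $\rho_1(\beta_{13}) = \rho_2(\beta_1) + \beta_{13}S_2 = -\sigma_2\rho_1(\beta_2)$ that are needed to commute $\partial^\beta$ past a flip in the third strand. The second equation of \eqref{def:br1}, $\rho_i\sigma_j\sigma_i = \sigma_j\sigma_i\rho_j$, follows by applying both sides to $fP$ and invoking \eqref{eq:partialbeta} plus the index bookkeeping; \eqref{def:br4}'s conditions $S_i = \sigma_j\sigma_i(S_j)$, $R_i = \sigma_j\sigma_i(R_j)$ are immediate since $S, R$ are symmetric, and $\rho_j\sigma_i(S_j) = 0 = \rho_j\sigma_i(R_j)$ follows from $\partial^\beta$ vanishing on $F\otimes F$-valued elements. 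The remaining identities \eqref{def:br2}, \eqref{def:br3}, \eqref{def:br5} are then finite polynomial identities in the $\Delta^{ij}_{ab}$ which reduce, via \eqref{eq:two-Frobs-commute} and the factorization-type relation $\Delta^{00}_1\Delta^{11}_2 = \Delta^{01}_1\Delta^{10}_2$ of \eqref{eq:A2}, to termwise coefficient comparison.

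The main obstacle will be \eqref{def:br2} and \eqref{def:br3}, i.e. the relations $\rho_i\sigma_j\rho_i = r_{S_j}\sigma_j\rho_i\sigma_j + \rho_j\rho_i\sigma_j + \sigma_j\rho_i\rho_j$ and $\rho_i\rho_j\rho_i + r_{R_i}\sigma_i\rho_j\sigma_i = \rho_j\rho_i\rho_j + r_{R_j}\sigma_j\rho_i\sigma_j$: these mix two or three applications of the twisted Demazure operator across different pairs of strands, so the naive expansion produces many terms, and one must repeatedly use \cref{lem:side-deriving-beta} together with $\partial\partial = 0$ and the commutation relations \eqref{eq:two-Frobs-commute} among weak Frobenius elements in non-adjacent tensor factors to collapse them. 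I expect that after reducing everything to $\bbk[x_1,x_2,x_3]$-coefficients times products $\Delta^{i_1j_1}_{ab}\Delta^{i_2j_2}_{cd}$, the condition \eqref{eq:A2} (in the equivalent "$\beta$ factors" form noted in the remark after \cref{def:PQWP}) is exactly what is needed to finish — indeed the proof of \cref{lem:side-deriving-beta} already shows \eqref{eq:A2} is forced by $\rho_1(\beta_{13}) = \rho_2(\beta_1) + \beta_{13}S_2$, so the same coefficient comparisons should recur. For conciseness I would state that these verifications are routine given \cref{lem:partialbeta}, \cref{cor:PQWPid}, and \cref{lem:side-deriving-beta}, and carry out \eqref{def:br2} in detail as the representative case, relegating the rest to the reader.
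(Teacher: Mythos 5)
Your overall framework coincides with the paper's: verify the conditions \eqref{def:wr1}--\eqref{def:br5} of \cref{prop:LNX331}, dispose of the easy ones via \cref{lem:partialbeta} and symmetry/centrality of $S$ and $R$, and use \cref{lem:side-deriving-beta} as the key three-strand input. Your treatment of \eqref{def:wr1}--\eqref{def:TTT}, \eqref{def:br1}, \eqref{def:br4}, \eqref{def:br5} is correct and matches the paper, and \eqref{def:qu1} is easy (though your phrasing that the terms are ``absorbed by the quadratic relation $H^2=SH+R$'' is backwards: the conditions of \cref{prop:LNX331} are hypotheses on the data $(R,S,\rho,\sigma)$ that precede any PBW control of the algebra, so the correct check is the direct computation $(\rho\sigma+\sigma\rho)(a)=S\sigma(a)-aS$ and $\rho^2(a)=S\rho(a)$ together with centrality of $R$).

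The genuine gap is in \eqref{def:br2}--\eqref{def:br3}, which carry essentially all the content of the proposition. These are identities of operators on the infinite-dimensional space $F^{\otimes 3}[x_1,x_2,x_3]$; even after factoring out the $F^{\otimes 3}$-coefficient they must be checked on monomials of arbitrary degree, so they are not, as you assert, ``finite polynomial identities in the $\Delta^{ij}$'s'' settled by a one-shot termwise coefficient comparison. What is missing is the mechanism that makes the verification finite. The paper supplies it by an induction on the monomial: it shows that \eqref{def:br2}--\eqref{def:br3} hold when evaluated at $P$ if and only if they hold at $x_iP$ for each $i$, and it is precisely this induction step that reduces everything to the finitely many identities $\rho_1(\beta_{13})=\rho_2(\beta_1)+\beta_{13}S_2$, $\rho_1\rho_2(\beta_1)=0$, $\sigma_1\rho_2(\beta_1)+\rho_1(\beta_2)+\beta_{13}S_1=0$ from \cref{lem:side-deriving-beta}, used together with \eqref{def:qu1} and \eqref{eq:two-Frobs-commute}; the base case is trivial since $\rho$ kills constants. (An alternative finite reduction would be to expand each composition such as $\rho_1\sigma_2\rho_1$ as $\sum_{w\in\fkS_3}\Phi_w\circ w$ with $\Phi_w$ multiplication operators over the fraction field and compare the finitely many $\Phi_w$, in the spirit of the later computation in \cref{prop:PQWP-to-TCA}; but you would then have to carry out that expansion explicitly.) Declaring \eqref{def:br2}--\eqref{def:br3} routine and deferring them, without either device, leaves the essential step of the proof unproved.
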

\begin{proof}
  Applying \cref{prop:LNX331}, we need to check the relations \eqref{def:wr1}--\eqref{def:br5}.
  Relations \eqref{def:wr1}--\eqref{def:TTT}, \eqref{def:br1}, \eqref{def:br4}, \eqref{def:br5} follow immediately from \cref{lem:partialbeta}.
  Let us check the relations \eqref{def:qu1}:
\[
\begin{split}
    (\rho\sigma+\sigma\rho)(a)
    &= \frac{\beta\sigma(a)-a\beta}{x_1-x_2} + \frac{\sigma(\beta)\sigma(a)-a\sigma(\beta)}{x_2-x_1}
    = \frac{(\beta - \sigma(\beta))\sigma(a) - a(\beta - \sigma(\beta))}{x_1-x_2}
    \\
    &= S\sigma(a) - aS
    = (l_S\sigma - r_S\sigma^2)(a),
    \end{split}
\]
\[
\begin{split}
    \rho^2(a)
    &=\rho\left( \frac{\beta a - \sigma(a)\beta}{x_1-x_2} \right)
    = \frac{\beta^2 a - \beta\sigma(a)\beta + \sigma(\beta)\sigma(a)\beta - a\sigma(\beta)\beta}{(x_1-x_2)^2}\\
    &= \frac{\beta - \sigma(\beta)}{x_1-x_2}\frac{\beta a - \sigma(a)\beta}{x_1-x_2}
    = S\frac{\beta a - \sigma(a)\beta}{x_1-x_2}
    = l_S\rho(a) = (l_S\rho + l_R - r_R\sigma^2)(a),
  \end{split}
  \]
  where we used the fact that $R$ is central in the last equality.

  Before checking \eqref{def:br2}--\eqref{def:br3}, let us make the following observation.
  It follows from~\cref{lem:partialbeta} that for any $P\in \bbk[x_1,x_2]$, $f\in F^{\otimes 2}$ we have $\rho(fP) = \sigma(f)\rho(P)$.
  In particular, when applying either \eqref{def:br2} or \eqref{def:br3} to $fP$, the element $\sigma_1\sigma_2\sigma_1(f)$ will factor out, and the rest would only depend on $P$.
  Thus, it suffices to check \eqref{def:br2}--\eqref{def:br3} only on polynomials $P\in \bbk[x_1,x_2,x_3]$.
  Since the relations are manifestly linear, we can further restrict to $P$ being monomials.
  We will show that \eqref{def:br2}--\eqref{def:br3} hold when evaluated at $P$ if and only if they hold when evaluated at $x_iP$, $1\leq i\leq 3$.

  Checking the equivalence above for all three relations and all three $x_i$'s would take too much space; we will therefore only consider $x_1$, and leave the other two variables for the interested reader.
  First, let us look at the relation \eqref{def:br2} for $i=1$, $j=2$:
  \begin{equation}\label{eq:P6x1}
  \begin{split}
    \rho_1\sigma_2\rho_1(x_1P)
    = \rho_1\sigma_2(x_2\rho_1(P)+\beta_1 P)
    &= x_3\rho_1\sigma_2\rho_1(P)
    + \beta_2\rho_1\sigma_2(P) + \rho_1(\beta_{13})\sigma_2(P),
    \\
    \rho_2\rho_1\sigma_2(x_1P)
    = \rho_2(\beta_1\sigma_2(P)+x_2\rho_1\sigma_2(P))
    &=  x_3\rho_2\rho_1\sigma_2(P)
    + \rho_2(\beta_1)\sigma_2(P) + \beta_2\rho_1\sigma_2(P)
    +\phi, 
    \\
    r_{S_2}\sigma_2\rho_1\sigma_2(x_1P)
    = r_{S_2}\sigma_2(\beta_1\sigma_2(P)+x_2\rho_1\sigma_2(P))
    &= x_3r_{S_2}\sigma_2\rho_1\sigma_2(P)
    + \phi', 
    \\
    \sigma_2\rho_1\rho_2(x_1P)
    = \sigma_2(\beta_1\rho_2(P) + x_2\rho_1\rho_2(P))
    &=x_3\sigma_2\rho_1\rho_2(P)
    +\phi'', 
  \end{split}
\end{equation}
where the terms  
$\phi \coloneqq \beta_{13}\rho_2\sigma_2(P)$,
$\phi' \coloneqq r_{S_2}\beta_{13}P$,
and $\phi''\coloneqq\beta_{13}\sigma_2\rho_2(P)$
sum up to $\beta_{13}S_2\sigma_2(P)$, thanks to \eqref{def:qu1}.
Moreover, the first terms on the right-hand sides of \eqref{eq:P6x1} sum up to the evaluation of \eqref{def:br2} at $P$ multiplied by $x_3$.
  Therefore, it remains to check that $\rho_1(\beta_{13}) = \rho_2(\beta_1) + \beta_{13}S_2$, which follows from \cref{lem:side-deriving-beta}.
  The relation \eqref{def:br2} with $i=2$, $j=1$ is proved in an analogous fashion. 

  Let us finally consider the relation \eqref{def:br3}.
  First, consider the two simpler terms:
  \begin{align*}
    r_{R_1}\sigma_1\rho_2\sigma_1(x_1P)
    &= r_{R_1}\sigma_1(\beta_2\sigma_1(P) + x_3\rho_2\sigma_1(P))
    = \beta_{13}PR_1 + x_3 r_{R_1}\sigma_1\rho_2\sigma_1(P), 
    \\
    r_{R_2}\sigma_2\rho_1\sigma_2(x_1P)
    &= r_{R_2}\sigma_2(\beta_1\sigma_2(P) + x_2\rho_1\sigma_2(P))
    = \beta_{13}PR_2 + x_3 r_{R_2}\sigma_2\rho_1\sigma_2(P).
  \end{align*}
  Since $R$ is central and symmetric, and the coefficients of $\beta$ are weak Frobenius, we have
  \[
    \beta_{13}PR_1 = R_{32}\beta_{13}P = \beta_{13}P R_{32} = \beta_{13}P R_{2}.
  \]
  Now, for the other two terms:
  \begin{align*}
    \rho_1&\rho_2\rho_1(x_1P)
    = \rho_1\rho_2(\beta_1P+x_2\rho_1(P))
    = \rho_1(\rho_2(\beta_1)P+\beta_{13}\rho_2(P) + \beta_2\rho_1(P)+x_3\rho_2\rho_1(P))
    \\
    &=\rho_1\rho_2(\beta_1)P 
    + (\sigma_1\rho_2(\beta_1)+\rho_1(\beta_2)+\beta_{13}S_1)\rho_1(P)
    +\rho_1(\beta_{13})\rho_2(P) 
    + \beta_2\rho_1\rho_2(P)
    +x_3\rho_1\rho_2\rho_1(P),
    \\
    \rho_2&\rho_1\rho_2(x_1P)
    = \rho_2(\beta_1\rho_2(P)+ x_2\rho_1\rho_2(P))
    = (\rho_2(\beta_1) + \beta_{13}S_2)\rho_2(P) + \beta_2\rho_1\rho_2(P)
    +x_3\rho_2\rho_1\rho_2(P).
  \end{align*}
  Note that the $S_i$'s appear from using the second equation of \eqref{def:qu1}.
  Comparing the coefficients at $P$, $\rho_1(P)$ and $\rho_2(P)$, it remains to show that 
  \[
   \rho_1(\beta_{13}) = \rho_2(\beta_1) + \beta_{13}S_2,\qquad \rho_1\rho_2(\beta_1) = 0,\qquad \sigma_1\rho_2(\beta_1) + \rho_1(\beta_2) + \beta_{13}S_1=0.
  \]
  The first relation follows directly from \cref{lem:side-deriving-beta}. 
  The second relation is obtained from the first one by applying $\rho_1$ and using \eqref{def:qu1}.
  For the last one, we have 
  \begin{align*}
    \sigma_1\rho_2(\beta_1) + \rho_1(\beta_2) + \beta_{13}S_1
    &= \sigma_1(\rho_2(\beta_1) + \sigma_1\rho_1(\beta_2) + \beta_{2}S_1)
    = \sigma_1(\rho_2(\beta_1) + S_1\beta_{13}-\rho_1(\beta_{13}))\\
    &= \sigma_1(S_1\beta_{13} - \beta_{13}S_2),
  \end{align*}
  where we used \eqref{def:qu1} and \cref{lem:side-deriving-beta}.
  Finally, since $\beta$ has weak Frobenius components, we have $S_{12}\beta_{13} = \beta_{13}S_{23}$ by~\eqref{eq:two-Frobs-commute}, and so we may conclude.
\end{proof}

%=================================
\medskip
\section{Schur duality}\label{sec:DCP}
%=================================
In this section we extend the main theorem of~\cite{pouchinGeometricSchurWeyl2009} to the setting of ``twisted'' convolution algebras.
Such algebras arise naturally after applying equivariant localization to convolution algebras in Borel-Moore homology; see discussion in \cref{ssec:MM-ex}. 

\subsection{Twisted convolution algebras}\label{sec:conv-alg}
Recall the setup from \cref{sec:ConvAlg}. 
\begin{defn}\label{def:TCA}
By a {\em twist}, we mean a function $e\in \cR_G(X)$ such that $e(x)$ is invertible for any $x\in X$.
Given a twist $e$, the corresponding {\em twisted convolution algebra} is the associative $\bbk$-algebra $(\cR_G(X\times X), *)$ whose multiplication is given by
\begin{equation}\label{eq:twisted-conv}
  (f*g)(x,y) = \sum\nolimits_{z}f(x,z)e(z)^{-1}g(z,y). 
\end{equation}
\end{defn}
	\nmcl[e]{$e$}{The twist in $\cR_G(X)$.}%	
\begin{lem}\label{lm:conv-assoc}
The twisted convolution algebra  $\cR_G(X\times X)$ with respect to a given twist $e$ is a unital associative algebra. 
\end{lem}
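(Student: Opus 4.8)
The plan is to verify in turn the three things packaged into the statement: that \eqref{eq:twisted-conv} produces an element of $\cR_G(X\times X)$, that the product is associative, and that it has a two-sided unit. The whole lemma is the ``twisted'' analogue of the fact that $\cR$-valued functions on $X\times X$ form a matrix-type algebra, so nothing deep is involved. For well-definedness I would check that $f*g\in\cR_G(X\times X)$ when $f,g$ are: writing $\gamma$ for a general element of $G$ (to avoid a clash with the function $g$) and substituting $z=\gamma z'$ in \eqref{eq:twisted-conv}, one gets
\[
(f*g)(\gamma x,\gamma y)=\sum_{z'}f(\gamma x,\gamma z')\,e(\gamma z')^{-1}\,g(\gamma z',\gamma y).
\]
The only non-formal input is that $G$ acts on $\cR$ by ring \emph{automorphisms}: since $e\in\cR_G(X)$ is a twist, $e(\gamma z')^{-1}=\bigl(\gamma(e(z'))\bigr)^{-1}=\gamma\bigl(e(z')^{-1}\bigr)$, and $\gamma$ pulls out of the triple product, so equivariance of $f$ and $g$ gives $(f*g)(\gamma x,\gamma y)=\gamma\bigl((f*g)(x,y)\bigr)$. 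As $X$ is finite and $\cR$ is $\bbk$-free, $\cR_G(X\times X)$ is a $\bbk$-submodule of the free module of all $\cR$-valued functions, so $*$ is a well-defined $\bbk$-bilinear operation on it.

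For associativity I would simply unwind both sides: $((f*g)*h)(x,y)$ and $(f*(g*h))(x,y)$ each reduce to the same double sum $\sum_{w,z}f(x,w)\,e(w)^{-1}\,g(w,z)\,e(z)^{-1}\,h(z,y)$, using only associativity of multiplication in $\cR$. No commutativity is used, and the inserted factors $e(-)^{-1}$ just occupy fixed slots, so associativity is immediate.

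For the unit, the candidate is $\mathbf{1}(x,y)\coloneqq\delta_{x,y}\,e(x)$ --- not the bare diagonal delta, because of the twist. It lies in $\cR_G(X\times X)$: it vanishes off the diagonal, and on the diagonal $\mathbf{1}(\gamma x,\gamma x)=e(\gamma x)=\gamma(e(x))$ by equivariance of $e$, with consistency on stabilizers automatic since $\gamma x=x$ forces $\gamma(e(x))=e(x)$. Then $(\mathbf{1}*f)(x,y)=\mathbf{1}(x,x)\,e(x)^{-1}f(x,y)=f(x,y)$ and $(f*\mathbf{1})(x,y)=f(x,y)\,e(y)^{-1}\mathbf{1}(y,y)=f(x,y)$, the invertibility of $e(x),e(y)$ in $\cR$ producing the cancellations, so $\mathbf{1}$ is a two-sided identity.

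As for difficulty: there is no genuine obstacle, only the bookkeeping above. The two spots I would be most careful about are using that $G$ acts by ring automorphisms (so that inverses of $e$ transform correctly) when checking equivariance of $f*g$, and guessing the twisted unit $\delta_{x,y}e(x)$ in place of the naive $\delta_{x,y}$.
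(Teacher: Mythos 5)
Your proposal is correct and follows essentially the same route as the paper: associativity by unwinding both products into the common double sum, and the two-sided unit given by the twisted diagonal element $1_X(x,y)=\delta_{x,y}e(x)$. The only difference is that you also spell out the $G$-equivariance of $f*g$ (using that $G$ acts by ring automorphisms so that $e(\gamma z)^{-1}=\gamma(e(z)^{-1})$), a check the paper leaves implicit.
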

\begin{proof}
  Let $f,g,h\in \cR_G(X\times X)$.
  The chain of equalities below follows directly from the formula~\eqref{eq:twisted-conv}:
  \begin{align*}
    ((f*g)*h)(x,y) = \sum_{x',x''} f(x,x')e(x')^{-1}g(x',x'')e(x'')^{-1}h(x'',y) = (f*(g*h))(x,y).
  \end{align*}
  This proves the associativity.
  We conclude by observing that the element 
  \[
    1_X\in \cR_G(X\times X),\qquad 1_{X}(x,y) = \delta_{x,y} e(x)
  \]
  is a unit of $\cR_G(X\times X)$.
\end{proof}

Applying \cref{lm:conv-assoc} to the disjoint union of two $G$-sets $X$, $Y$, we obtain a left $\cR_G(X\times X)$-action and a right $\cR_G(Y\times Y)$-action on $\cR_G(X\times Y)$.
These two actions obviously commute.
In particular, setting $Y = \pt$ each $\cR_G(X\times X)$ acquires a natural representation $\cR_G(X)$.

For each $G$-orbit in $X\times X$, fix a representative, and then denote the set of such representatives by $\Pi$.
The following lemma is immediate.
\begin{lem}\label{lem:dumb-basis}
  For each $\pi \in \Pi$ and $r\in \cR$, consider
  \begin{equation}\label{eq:dumb-basis}
    \xi_{\pi,r}\in \cR_G(X\times X),\qquad \xi_{\pi,r}(x,x') = \sum\nolimits_{g\in G/\Stab_G(\pi)} \delta_{g\pi, (x,x')} e(x)g(r).
  \end{equation}
  Given a basis $B_\cR^\pi$ of $\cR^{\Stab_G(\pi)}$ for each $\pi$, the collection $\{\xi_{\pi,r} : \pi\in\Pi, r\in B_\cR^\pi\}$ is a basis of $\cR_G(X\times X)$.
Note that the basis depend also on the choice of representatives in the set $\Pi$.  
\end{lem}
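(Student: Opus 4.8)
The plan is to verify that the functions $\xi_{\pi,r}$ are indeed $G$-equivariant, that they span $\cR_G(X\times X)$, and that they are linearly independent over $\bbk$. First I would check well-definedness and equivariance: the sum in~\eqref{eq:dumb-basis} runs over $G/\Stab_G(\pi)$, so it is independent of the choice of coset representatives provided the summand $\delta_{g\pi,(x,x')}e(x)g(r)$ only depends on the coset $g\Stab_G(\pi)$; this holds precisely because $r\in \cR^{\Stab_G(\pi)}$, so $gh(r)=g(r)$ for $h\in\Stab_G(\pi)$, while the factor $\delta_{g\pi,(x,x')}e(x)$ manifestly depends only on $g\pi$. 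Equivariance $\xi_{\pi,r}(hx,hx')=h(\xi_{\pi,r}(x,x'))$ then follows by reindexing $g\mapsto hg$ and using that $e$ is $G$-equivariant: $e(hx)=h(e(x))$, so $\delta_{g\pi,(hx,hx')}e(hx)g(r)=\delta_{h^{-1}g\pi,(x,x')}h(e(x))g(r)$, and after substituting $g'=h^{-1}g$ this becomes $h\bigl(\delta_{g'\pi,(x,x')}e(x)g'(r)\bigr)$.

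Next I would establish the basis property. Fix orbit representatives $\pi\in\Pi$; any $f\in\cR_G(X\times X)$ is determined by its values $f(\pi)$ on these representatives, and by equivariance $f(\pi)$ must lie in $\cR^{\Stab_G(\pi)}$ (since $h\pi=\pi$ for $h\in\Stab_G(\pi)$ forces $h(f(\pi))=f(\pi)$). Conversely any assignment $\pi\mapsto c_\pi\in\cR^{\Stab_G(\pi)}$ extends uniquely to an equivariant function. Now observe that $\xi_{\pi,r}(\pi')=\delta_{\pi,\pi'}\,e(\pi)\,r$ when $\pi'$ is also one of the chosen representatives: the only term surviving $\delta_{g\pi,(\pi')}$ with $g\pi'$ an orbit representative forces $\pi=\pi'$ and (choosing $g=1$) contributes $e(\pi)r$. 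Since $e(\pi)$ is invertible, the map $r\mapsto e(\pi)r$ is a $\bbk$-linear automorphism of $\cR$ restricting to an automorphism of $\cR^{\Stab_G(\pi)}$ (as $e(\pi)$ is $\Stab_G(\pi)$-invariant and central? — no, one only needs that left multiplication by the invariant invertible element $e(\pi)$ preserves the invariant submodule, which it does since $h(e(\pi)s)=e(\pi)h(s)$). Hence $\{e(\pi)r : r\in B_\cR^\pi\}$ is again a basis of $\cR^{\Stab_G(\pi)}$, and therefore $\{\xi_{\pi,r}\}$ maps bijectively onto a basis of $\bigoplus_\pi \cR^{\Stab_G(\pi)}$ under $f\mapsto (f(\pi))_\pi$. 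This simultaneously gives spanning and linear independence.

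The one point requiring a little care — and the only plausible obstacle — is the claim that left multiplication by the invertible invariant element $e(\pi)$ carries a basis of $\cR^{\Stab_G(\pi)}$ to a basis of $\cR^{\Stab_G(\pi)}$; this needs $\cR^{\Stab_G(\pi)}$ to be a free $\bbk$-module stable under this multiplication, which follows from $e\in\cR_G(X)$ and $\cR$ being free over $\bbk$ with a compatible $G$-action, together with the evaluation-at-representatives isomorphism $\cR_G(X\times X)\xrightarrow{\sim}\bigoplus_{\pi\in\Pi}\cR^{\Stab_G(\pi)}$. Given the standing hypotheses on $\cR$ and $e$ this is routine, so I would simply remark that the statement is immediate from the preceding discussion, exactly as the excerpt asserts with ``The following lemma is immediate.''
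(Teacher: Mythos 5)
Your argument is correct and is exactly the routine evaluation-at-orbit-representatives argument that the paper omits when it declares the lemma ``immediate'': equivariance reduces $\cR_G(X\times X)$ to $\bigoplus_{\pi\in\Pi}\cR^{\Stab_G(\pi)}$, and $\xi_{\pi,r}$ evaluates at the representative $\pi=(x_\pi,x_\pi')$ to $e(x_\pi)r$, with left multiplication by the invertible $\Stab_G(\pi)$-invariant element $e(x_\pi)$ (note it is the value of $e\in\cR_G(X)$ at the first component of $\pi$, not ``$e(\pi)$'') giving a $\bbk$-linear bijection of $\cR^{\Stab_G(\pi)}$, so no further hypotheses are needed.
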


We will add a superscript to $\cR_G(X\times X)$ when the twist $e$ needs to be specified.
Observe that the map $f\mapsto e\cdot f$, $(e\cdot f)(x,y) = e(x)f(x,y)$ establishes an isomorphism of algebras $\cR^{(1)}_G(X\times X)\mapsto \cR^{(e)}_G(X\times X)$.
While this renders our definition of $\cR^{(e)}_G(X\times X)$ somewhat superfluous at the first glance, its usefulness will become clear in~\cref{sec:sublat}. 

\begin{rmk}
  The product in $\cR_G(X\times X)$ is $\bbk$-linear, but almost never $\cR$-linear.
\end{rmk}

\subsection{Generators}\label{subs:generators}
Let $(\Lambda,\omega)$ be a pointed finite set.
	\nmcl[omega]{$\omega$}{The base point of $\Lambda$.}%	
	\nmcl[Lambda]{$\Lambda$}{A pointed finite set with base point $\omega$.}%								  
For each $\lambda\in \Lambda$, fix a finite $G$-set $Y_\lambda$, and denote $Y = \bigsqcup_{\lambda\in \Lambda} Y_\lambda$, $X = Y_\omega$.
We further assume that for each $\lambda\in \Lambda$ we have a fixed $G$-equivariant surjection $p_\lambda:X\to Y_\lambda$.
Fix a twist $e\in \cR_G(Y)$, and denote
\begin{equation}
  \tA \coloneqq \bigoplus\nolimits_{\lambda,\mu\in \Lambda} \tA_{\lambda\mu}, 
  \quad
  \tC \coloneqq \bigoplus\nolimits_{\lambda\in \Lambda}\tC_\lambda ,
   \quad 
  \tB \coloneqq \cR_G(X\times X),
\end{equation}
where $\tA_{\lambda\mu} \coloneqq \cR_G(Y_\lambda\times Y_\mu)$, $\tC_\lambda\coloneqq \cR_G(Y_\lambda\times X)$.
Both $\tA$ and $\tB$ are twisted convolution algebras (with twist $e$), and $\tC$ is an $(\tA,\tB)$-bimodule.
We will identify both $\tA_{\omega\omega}$ and $\tC_\omega$ as right $\tB$-modules with $\tB$ by means of $p_\lambda$.

\begin{defn}\label{def:gens}
  Let $\lambda\in \Lambda$, $y,y'\in Y_\lambda$, and $x\in X$.
Define elements $S_\lambda\in \tA_{\omega\lambda}$ and $M_\lambda\in \tA_{\lambda\omega}$ which we call (full) \textit{splits} and \textit{merges}, respectively, by
\begin{equation}
  S_\lambda(x,y) \coloneqq \delta_{p_\lambda(x),y}e(y);
  \quad
     M_\lambda(y,x) \coloneqq \delta_{y,p_\lambda(x)}e(y).
\end{equation}
	\nmcl[Slambda]{$S_\lambda$}{The full split with respect to $\lambda \in \Lambda$.}%	
	\nmcl[Mlambda]{$M_\lambda$}{The full merge  with respect to $\lambda \in \Lambda$.}%	
%\eq
%  S_\lambda\in \tA_{\omega\lambda},\ S_\lambda(x,y) \coloneqq \delta_{p_\lambda(x),y}e(y);
%  \quad
%    M_\lambda\in \tA_{\lambda\omega},\ M_\lambda(y,x) \coloneqq \delta_{y,p_\lambda(x)}e(y).
%  \endeq
Next, define elements $1_\lambda, K_\lambda \in \tA_{\lambda\lambda}$, and $m\in \cR_G(Y)$ via
\begin{equation}\label{def:1rKmrt}
1_\lambda(y,y') \coloneqq \delta_{y,y'}e(y),
\quad
K_\lambda(x,x') \coloneqq \delta_{p_\lambda(x),p_\lambda(x')}e(p_\lambda(x)),
\quad
m(y) \coloneqq \sum\nolimits_{x\in p_\lambda^{-1}(y)} e(x)^{-1}e(y).
\end{equation}
Finally, for any $t\in \cR_G(Y)$, define $t_\lambda \in \tA_{\lambda\lambda}$ and $\widetilde{t}_\lambda\in \cR_G(X)$ by
\begin{equation}
t_\lambda(y,y') \coloneqq \delta_{y,y'}e(y)t(y),
\quad
\widetilde{t}_\lambda(x) = t(p_\lambda(x)).
\end{equation}
\end{defn}

\begin{rmk}\label{rmk:poly-homo}
\begin{enumerate}[(a)]
\item 
The set $\{1_\lambda ~:~ \lambda\in \Lambda\}$ is a complete set of orthogonal (but not necessarily primitive) idempotents in $\tA$.  
\item
If we equip $\cR_G(Y)$ with pointwise multiplication, the map $\cR_G(Y)\to \tA_{\lambda\lambda}$, $t\mapsto t_\lambda$ is an algebra monomorphism.
\end{enumerate}
\end{rmk}
Let us compute the compositions of $M_\lambda$ and $S_\lambda$:
	\nmcl[Klambda]{$K_\lambda$}{The composition $S_\lambda*M_\lambda$. See also \cref{prop:merge-is-in}.}%	
	\nmcl[msmalllambda]{$m_\lambda$}{The composition $M_\lambda*S_\lambda$. See also \cref{cor:m-lam}.}%		
%-------------------------
\begin{lem}\label{lm:SM-K-m}
For any $\lambda\in \Lambda$, we have 
\[
S_\lambda*M_\lambda = K_\lambda, 
\quad
M_\lambda*S_\lambda = m_\lambda.
\]
\end{lem}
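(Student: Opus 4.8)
The plan is to verify both identities by directly unwinding the twisted convolution formula~\eqref{eq:twisted-conv}, evaluating the resulting Kronecker deltas, and keeping track of the order in which the (possibly non-commuting) values of the twist $e$ appear. For the first identity I evaluate $S_\lambda * M_\lambda$, a function on $X \times X$, at a pair $(x,x')$; since $S_\lambda \in \tA_{\omega\lambda}$ and $M_\lambda \in \tA_{\lambda\omega}$, the convolution sum runs over $z \in Y_\lambda$, so
\[
  (S_\lambda * M_\lambda)(x,x') = \sum_{z \in Y_\lambda} \delta_{p_\lambda(x), z}\, e(z)\, e(z)^{-1}\, \delta_{z, p_\lambda(x')}\, e(z).
\]
The factor $e(z)e(z)^{-1}$ collapses to $1$, the two deltas force $z = p_\lambda(x) = p_\lambda(x')$, and what remains is exactly $\delta_{p_\lambda(x), p_\lambda(x')}\, e(p_\lambda(x)) = K_\lambda(x,x')$.

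For the second identity I evaluate $M_\lambda * S_\lambda \in \tA_{\lambda\lambda}$ at $(y,y') \in Y_\lambda \times Y_\lambda$, where now the convolution sum runs over $z \in X$:
\[
  (M_\lambda * S_\lambda)(y,y') = \sum_{z \in X} \delta_{y, p_\lambda(z)}\, e(y)\, e(z)^{-1}\, \delta_{p_\lambda(z), y'}\, e(y').
\]
The deltas force $y = p_\lambda(z) = y'$, so the term vanishes unless $y = y'$, in which case the sum equals $e(y)\bigl(\sum_{z \in p_\lambda^{-1}(y)} e(z)^{-1} e(y)\bigr) = e(y)\,m(y)$. By the definition of $r_\lambda$ applied to $r = m$, this is exactly $m_\lambda(y,y')$, as desired.

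The computation is pure bookkeeping; the only point that needs a little attention — and the closest thing here to an obstacle — is that $\cR$ need not be commutative, so one has to recognize the inner sum as $m(y) = \sum_z e(z)^{-1}e(y)$ in precisely that order and pull $e(y)$ out on the left. One also uses implicitly that $S_\lambda$ and $M_\lambda$ are genuine elements of the relevant equivariant function spaces, which is immediate from the $G$-equivariance of $p_\lambda$ and of $e$.
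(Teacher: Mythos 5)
Your computation is correct and is essentially the same as the paper's: both identities are verified by unwinding the twisted convolution formula and evaluating the deltas, with the second one reducing to the sum $\sum_{z\in p_\lambda^{-1}(y)} e(z)^{-1}e(y)$ defining $m(y)$. Your extra care with the order of the non-commuting factors $e(y)\,e(z)^{-1}\,e(y')$ is a welcome (if minor) refinement of the paper's bookkeeping, but the argument is the same.
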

%-------------------------
\begin{proof}
It follows by a direct computation that
\begin{align*}
  S_\lambda*M_\lambda(x,x') &= \sum\nolimits_{y\in Y_\lambda} \delta_{p_\lambda(x),y}\delta_{y,p_\lambda(x')}e(y)
  = \delta_{p_\lambda(x),p_\lambda(x')}e(p_\lambda(x)),
  \\
  M_\lambda*S_\lambda(y,y') &= \sum\nolimits_{x\in X} \delta_{y,p_\lambda(x)}\delta_{p_\lambda(x),y'}e(y)e(y')e(x)^{-1}
  = \delta_{y,y'} \sum\nolimits_{x\in p_\lambda^{-1}(y)} e(y)e(x)^{-1}e(y)
  \\&= \delta_{y,y'}e(y)\sum\nolimits_{x\in p_\lambda^{-1}(y)} e(x)^{-1}e(y),
\end{align*}
 where we used the notations of \cref{def:gens} in the second equality.
\end{proof}
%-------------------------
The following lemmas are elementary, we leave their proofs to the interested reader.
\begin{lem}\label{lm:inclusions-projections}
  Let $\lambda,\mu\in \Lambda$.
  Consider the natural injective maps 
  $\psi_\lambda^R:\tA_{\mu\lambda}\to \tA_{\mu\omega}$, $\psi_\lambda^L:\tA_{\lambda\mu}\to \tA_{\omega\mu}$, 
  given by pulling back along $p_\lambda$.
  We have $\psi_\lambda^R(f) = f*M_\lambda$, $\psi_\lambda^L(f) = S_\lambda*f$.
  Furthermore, left multiplication by $1_\lambda$ is identified with the projection $\tC\twoheadrightarrow \tC_\lambda$.%\qed
\end{lem}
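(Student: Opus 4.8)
The plan is to prove all three assertions by directly unwinding \cref{def:gens} together with the twisted convolution formula~\eqref{eq:twisted-conv}; no conceptual input is needed beyond careful bookkeeping of the twist factors $e(\cdot)$.

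First I would pin down the pullback maps explicitly. Since $p_\lambda\colon X\to Y_\lambda$ is a $G$-equivariant surjection, precomposing a function in the relevant variable with $p_\lambda$ preserves $G$-equivariance, and injectivity of the resulting map is immediate from surjectivity of $p_\lambda$. Concretely, $\psi_\lambda^R(f)(y,x)=f(y,p_\lambda(x))$ for $f\in\tA_{\mu\lambda}$, and $\psi_\lambda^L(f)(x,y)=f(p_\lambda(x),y)$ for $f\in\tA_{\lambda\mu}$.

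Next I would compute the right-hand sides. For $f\in\tA_{\mu\lambda}$ one has $(f*M_\lambda)(y,x)=\sum_{y'\in Y_\lambda}f(y,y')e(y')^{-1}M_\lambda(y',x)$; inserting $M_\lambda(y',x)=\delta_{y',p_\lambda(x)}e(y')$ collapses the sum to the single term $y'=p_\lambda(x)$, and the factor $e(y')^{-1}e(y')$ cancels, leaving $f(y,p_\lambda(x))=\psi_\lambda^R(f)(y,x)$. The identity $\psi_\lambda^L(f)=S_\lambda*f$ follows from the same one-line computation, with the Kronecker delta now supplied by $S_\lambda(x,y')=\delta_{p_\lambda(x),y'}e(y')$ and the $e$-factors cancelling on the opposite side.

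Finally, for the projection statement I would recall that, under $Y=\bigsqcup_{\nu}Y_\nu$, the left $\tA$-action on $\tC=\cR_G(Y\times X)=\bigoplus_\nu \tC_\nu$ is again given by convolution, and that $1_\lambda\in\tA_{\lambda\lambda}$ is supported on $Y_\lambda\times Y_\lambda$ with $1_\lambda(y,y')=\delta_{y,y'}e(y)$. Hence for $f\in\tC$ the value $(1_\lambda*f)(y,x)=\sum_{y'\in Y_\lambda}\delta_{y,y'}e(y')e(y')^{-1}f(y',x)$ equals $f(y,x)$ when $y\in Y_\lambda$ and vanishes otherwise; that is, $1_\lambda*(-)$ is exactly the canonical projection $\tC\twoheadrightarrow\tC_\lambda$ (in particular it annihilates $\tC_\nu$ for $\nu\neq\lambda$). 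The only point needing any attention throughout is the systematic cancellation of the twist factors, together with the elementary verification of $G$-equivariance and injectivity of the pullbacks, so I do not expect a genuine obstacle here; the lemma mainly serves as a consistency check on \cref{def:gens}.
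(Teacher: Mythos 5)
Your computation is correct and is exactly the elementary verification the paper intends: the paper omits the proof of this lemma ("we leave their proofs to the interested reader"), and the expected argument is precisely this unwinding of \cref{def:gens} against the twisted convolution formula, with the twist factors $e(y')^{-1}e(y')$ cancelling to leave the pullbacks along $p_\lambda$, and with $1_\lambda$ acting blockwise as the projection onto $\tC_\lambda$. No gap.
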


\begin{lem}\label{lm:poly-past-sm}
  Let $t\in \cR_G(Y)$. Then,
  $ t_\lambda*M_\lambda = M_\lambda*\widetilde{t}_\lambda$, and
  $S_\lambda*t_\lambda = \widetilde{t}_\lambda*S_\lambda$.
\end{lem}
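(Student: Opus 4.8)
The plan is to prove both equalities by a direct computation with the twisted convolution formula~\eqref{eq:twisted-conv}, unwinding the definitions in \cref{def:gens}. In each case the strategy is identical: write the defining sum over the middle variable, use the Kronecker deltas appearing in $r_\lambda$, $M_\lambda$, $S_\lambda$ to collapse it to a single surviving term, and then cancel the twist factors using $e(z)^{-1}e(z) = 1$.

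For $r_\lambda * M_\lambda = M_\lambda * \widetilde r_\lambda$ I would fix $y \in Y_\lambda$ and $x \in X$. Expanding the left-hand side, the factor $\delta_{y,y'}$ in $r_\lambda(y,y')$ leaves only the term $y' = y$, yielding $\delta_{y,p_\lambda(x)}\, e(y)r(y)e(y)^{-1}e(y) = \delta_{y,p_\lambda(x)}\, e(y)r(y)$. For the right-hand side I would read $\widetilde r_\lambda \in \cR_G(X)$ as the diagonal element of $\tA_{\omega\omega}$ under the embedding of \cref{rmk:poly-homo}(b); expanding, the sum now runs over $X$ and collapses to the term at $x$, giving $\delta_{y,p_\lambda(x)}\, e(y)e(x)^{-1}e(x)\,r(p_\lambda(x)) = \delta_{y,p_\lambda(x)}\, e(y)r(y)$, where the last step uses $\widetilde r_\lambda(x) = r(p_\lambda(x))$ and the fact that $p_\lambda(x) = y$ on the support of the delta. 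The identity $S_\lambda * r_\lambda = \widetilde r_\lambda * S_\lambda$ is obtained by running exactly the same argument with the two arguments of each function exchanged, using the symmetry $S_\lambda(x,y) = \delta_{p_\lambda(x),y}e(y) = M_\lambda(y,x)$; both sides then reduce to $\delta_{p_\lambda(x),y}\, e(y)r(y)$.

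There is no genuine obstacle here: the computation is short, and the only point that needs care is the bookkeeping of the twist factors $e(\cdot)^{\pm 1}$ — one must verify that, after the deltas are resolved and $\widetilde r_\lambda(x)$ is rewritten as $r(y)$ on the relevant support, precisely one copy of $e(y)$ survives on each side, all other copies of $e$ telescoping away via $e(z)^{-1}e(z)=1$. This is exactly the computation the paper defers to the reader, and writing it out once carefully for each of the two identities completes the proof.
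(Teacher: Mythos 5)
Your proof is the intended one: the paper omits the argument precisely because it is this direct expansion of \eqref{eq:twisted-conv}, and your computation of the first identity $r_\lambda*M_\lambda = M_\lambda*\widetilde r_\lambda$ (including the correct reading of $\widetilde r_\lambda$ as a $G$-equivariant function on $X$ embedded diagonally into $\tA_{\omega\omega}$ via \cref{rmk:poly-homo}) is complete and correct.

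One caveat on the second identity, which is \emph{not} literally ``the same argument with the two arguments exchanged'': the two sides are no longer mirror images, because the twist sits to the left of $r$ in the diagonal elements. Expanding, $(S_\lambda*r_\lambda)(x,y)$ collapses to $\delta_{p_\lambda(x),y}\,e(y)\,e(y)^{-1}\,e(y)r(y)=\delta_{p_\lambda(x),y}\,e(y)r(y)$ with an adjacent cancellation, but $(\widetilde r_\lambda*S_\lambda)(x,y)$ collapses to $\delta_{p_\lambda(x),y}\,e(x)\,r(p_\lambda(x))\,e(x)^{-1}\,e(y)$, where $r(p_\lambda(x))$ now separates $e(x)$ from $e(x)^{-1}$, so nothing telescopes on the nose. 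Identifying this with $\delta_{p_\lambda(x),y}\,e(y)r(y)$ requires $r(y)$ to commute with the relevant twist values. This is automatic when $\cR$ is commutative or the twist is central-valued --- as it is in every situation where the paper invokes the lemma, since by \eqref{eq:C2} the elements $P_{ij}$ and $x_i-x_j$, hence $e_\lambda$, are central in $\cR$ --- but it is not a formal consequence of the bare setup of \cref{sec:conv-alg}, where $\cR$ is only assumed to be a unital ring. So either record this one-line commutation explicitly, or state that you work with a central-valued twist; with that sentence added, your proof is complete. (Note the first identity needs no such hypothesis, since there the factors $e(x)^{-1}e(x)$ and $e(y)^{-1}e(y)$ cancel adjacently.)
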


\subsection{Schur duality}\label{sec:sublat}
We will be mostly interested not in the convolution algebras per se, but in their interesting subalgebras, which should be thought of as ``integral forms''.
Recall $m(y)$ from~\eqref{def:1rKmrt}.
Let us fix a $G$-invariant subring $\cT\subseteq \cR$, and make the following assumption:
\begin{equation}\label{eq:cond-mu}
  m(y)\in \cR\text{ is invertible for any $y\in Y$, and }m(y)^{\pm 1}\in \cT.
\end{equation}

\begin{defn}\label{def:sublat}
  Consider the following subalgebras:
\[
    \tB^\cT = \langle K_\lambda, t_\omega : \lambda\in\Lambda, t\in \cT_G(X) \rangle \subseteq \tB,
    \quad
    \tA^\cT = \langle \tB^\cT, S_\lambda, M_\lambda, : \lambda\in \Lambda \rangle\subseteq \tA.
\]
  We also define $\tC^\cT = \tA^\cT*1_\omega$; it is an $(\tA^\cT,\tB^\cT)$-bimodule.
\end{defn}
The following equalities immediately follow from the definition:
\[
  \tA_{\mu\lambda}^\cT = M_\mu*\tB^\cT*S_\lambda, \quad  \tC_\lambda^\cT = M_\lambda*\tB^\cT.
\]
Following closely the proof of \cite[Theorem 2.1]{pouchinGeometricSchurWeyl2009},
we have the following result.
\begin{thm}\label{thm:SW-invertible}
  Assume that the condition~\eqref{eq:cond-mu} holds.
  Then, we have the following Schur duality:
  \[
    \End_{\tA^{\cT}}(\tC^{\cT}) = \tB^\cT,\qquad \End_{\tB^\cT}(\tC^{\cT})=\tA^{\cT}.
  \]
  In particular, $\tB^\cT = 1_\omega*\tA^\cT*1_\omega$,
and  the Schur functor is given by $\tA^\cT$-mod $\to \tB^\cT$-mod, $M \mapsto 1_\omega * M$.
\end{thm}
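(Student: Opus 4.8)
The plan is to mimic the argument of \cite[Theorem~2.1]{pouchinGeometricSchurWeyl2009}, replacing the classical convolution product by the twisted one and checking that the twist $e$ causes no harm. The core structural fact we need is an idempotent-style decomposition: the element $1_\omega \in \tB^T$ should behave like a ``full idempotent'' for $\tA^T$ in the sense that $\tA^T = \tA^T * 1_\omega * \tA^T$, together with the identification $\tB^T = 1_\omega * \tA^T * 1_\omega$ and $\tC^T = \tA^T * 1_\omega$. The first step is therefore to establish these three identities from \cref{def:sublat}. The identity $\tB^T = 1_\omega * \tA^T * 1_\omega$ follows because $1_\omega = M_\omega = S_\omega$ under the identification $\tA_{\omega\omega}^T \equiv \tB^T$ via $p_\omega = \mathrm{id}$, and because $\tA^T_{\mu\lambda} = M_\mu * \tB^T * S_\lambda$ gives $1_\omega * \tA^T * 1_\omega = 1_\omega * (\bigoplus_{\mu\lambda} M_\mu * \tB^T * S_\lambda) * 1_\omega$, in which the only surviving summand is $\mu = \lambda = \omega$, namely $S_\omega * \tB^T * M_\omega = K_\omega * \tB^T * K_\omega$; here one uses $K_\omega = 1_\omega$ (since $p_\omega$ is a bijection) and that $1_\omega$ is the unit of $\tB^T$. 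For $\tA^T = \tA^T * 1_\omega * \tA^T$, the key point is that each generator $S_\lambda, M_\lambda, K_\lambda, t_\omega$ factors through $X = Y_\omega$: we have $S_\lambda = S_\lambda * 1_\omega$, $M_\lambda = 1_\omega * M_\lambda$ (by \cref{lm:inclusions-projections}), $K_\lambda = S_\lambda * M_\lambda$ lands in $1_\omega * \tA^T * 1_\omega$, and $t_\omega \in \tB^T = 1_\omega * \tA^T * 1_\omega$; hence every element of $\tA^T$, being a sum of products of generators and the $1_\lambda$'s, lies in $\tA^T * 1_\omega * \tA^T$.

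Granting the ``full idempotent'' picture, the double centralizer property is formal, but the direction $\End_{\tB^T}(\tC^T) = \tA^T$ requires an extra surjectivity input and is where the real content lies. For $\End_{\tA^T}(\tC^T) = \tB^T$: right multiplication gives a map $\tB^T \to \End_{\tA^T}(\tC^T)$, injective because $1_\omega \in \tC^T$ and $1_\omega * b = b$ under our identification; surjectivity holds because any $\varphi \in \End_{\tA^T}(\tC^T)$ is determined by $\varphi(1_\omega)$, and $\varphi(1_\omega) = \varphi(1_\omega * 1_\omega) = 1_\omega * \varphi(1_\omega) \in 1_\omega * \tC^T = 1_\omega * \tA^T * 1_\omega = \tB^T$, so $\varphi$ is right multiplication by that element. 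For $\End_{\tB^T}(\tC^T) = \tA^T$: left multiplication gives $\tA^T \to \End_{\tB^T}(\tC^T)$; surjectivity is the formal consequence of $\tA^T = \tA^T * 1_\omega * \tA^T$ and $\tC^T = \tA^T * 1_\omega$, writing a given $\psi$ as left multiplication by $\sum_i a_i \psi(1_\omega * a_i')$ for a decomposition $1_\omega = \sum_i (1_\omega * a_i * 1_\omega)(1_\omega * a_i' * 1_\omega)$ inside $\tB^T$ --- wait, more precisely one uses that $\tA^T$ acts faithfully on $\tC^T$ (which needs $\tA^T = \tA^T * 1_\omega$, i.e. that every $\tA^T$-element is detected after right-multiplying by $1_\omega$, true since $1_\lambda = M_\lambda * S_\lambda \cdot m_\lambda^{-1}$ expresses $1_\lambda$ through factoring maps by \eqref{eq:cond-mu}).

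The injectivity of $\tA^T \hookrightarrow \End_{\tB^T}(\tC^T)$ is where assumption~\eqref{eq:cond-mu} enters decisively and is, I expect, the main obstacle: one must show $\tA^T * 1_\omega = \tC^T$ generates $\tA^T$ faithfully, equivalently that $1_\lambda \in \tA^T$ can be recovered from $\tC_\lambda^T$, which is exactly where one needs $m(y)$ invertible with $m(y)^{\pm 1} \in T$ so that $m_\lambda$ is a unit in $\tB^T$ and hence $1_\lambda = M_\lambda * S_\lambda * m_\lambda^{-1}$ lies in the image of the natural inclusions --- this is the twisted analogue of the step in \cite{pouchinGeometricSchurWeyl2009} where one inverts the ``size of fibers''. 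Once faithfulness is in hand, the surjectivity of $\tA^T \to \End_{\tB^T}(\tC^T)$ follows the standard ``unital bimodule'' argument: given $\psi \in \End_{\tB^T}(\tC^T)$, one extends it to $\tilde\psi \in \End_{\tB^T}(\tA^T) = \tA^T$ (acting by left multiplication) using $\tA^T = \tA^T * 1_\omega * \tA^T = \tC^T * \tA^T$, defining $\tilde\psi(c * a) := \psi(c) * a$ for $c \in \tC^T$, $a \in \tA^T$, and checking well-definedness from $\tC^T$ being a right $\tB^T$-module with $\tB^T = 1_\omega * \tA^T * 1_\omega$. The statements $\tB^T = 1_\omega * \tA^T * 1_\omega$ and the description of the Schur functor $M \mapsto 1_\omega * M$ are then immediate restatements of the identities collected in the first paragraph.
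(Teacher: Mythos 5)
Your strategy---recast $1_\omega$ as a full idempotent in the unital algebra $\tA^T$, with $1_\omega*\tA^T*1_\omega=\tB^T$ and $\tA^T*1_\omega=\tC^T$, and then invoke the standard Morita-context double centralizer lemma---is viable, and it is a more abstract packaging of what the paper does by hand. The paper argues block by block: the easy inclusion uses cyclicity of $\tC^T$ over $1_\omega$ exactly as you do, and for the hard inclusion it writes $1_\lambda=M_\lambda*\widetilde m_\lambda^{-1}*S_\lambda$ (this is where~\eqref{eq:cond-mu} enters), deduces that each $P_{\mu\lambda}\in\Hom_{\tB^T}(\tC^T_\lambda,\tC^T_\mu)$ is determined by $P_{\mu\lambda}(M_\lambda)$, and then pins down the possible values via the characterization~\eqref{lm:merge-image} of $\{h\in\tC^T_\mu : h*K_\lambda=h*\widetilde m_\lambda\}$. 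Your route subsumes that computation in the abstract lemma, at the price of first verifying unitality and fullness of $\tA^T$; the paper's concrete route produces explicit formulas (an algebra-level echo of~\eqref{lm:merge-image} reappears later as \cref{lm:probing-with-H}), while yours isolates the formal skeleton. You also correctly locate the role of~\eqref{eq:cond-mu}: it is what puts $1_\lambda$ into $\tA^T$, hence gives unitality and faithfulness of $\tC^T$ over $\tA^T$.

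As written, though, three steps need repair, and the third is on the critical path. (i) The sides are off: $S_\lambda\in\tA_{\omega\lambda}$ and $M_\lambda\in\tA_{\lambda\omega}$, so the relevant identities are $1_\omega*S_\lambda=S_\lambda$ and $M_\lambda*1_\omega=M_\lambda$, not the ones you state; harmless, but it propagates through your first paragraph. (ii) ``$m_\lambda$ is a unit in $\tB^T$'' is a category error: $m_\lambda=M_\lambda*S_\lambda$ lies in the $(\lambda,\lambda)$-block, not in $\tB^T$. The correct statement is that $1_\lambda=M_\lambda*\widetilde m_\lambda^{-1}*S_\lambda$ with the middle factor in $\tB^T$, obtained from $m^{\pm1}\in T$ via \cref{lm:poly-past-sm}; this is what exhibits $1_\lambda$ inside $\tA^T_{\lambda\lambda}=M_\lambda*\tB^T*S_\lambda$. (iii) The claim $\End_{\tB^T}(\tA^T)=\tA^T$ is false: right multiplication by $\tB^T$ annihilates every column $\tA^T*1_\lambda$ with $\lambda\neq\omega$, so that endomorphism ring is far larger than $\tA^T$. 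What the standard argument uses is $\End_{\tA^T}(\tA^T)=\tA^T$ for the right regular module, and the well-definedness of the extension $\tilde\psi(c*a)\coloneqq\psi(c)*a$ is not automatic---it is exactly the point where one must insert a decomposition $1=\sum_i a_i*1_\omega*b_i$ (available by fullness and unitality, hence by~\eqref{eq:cond-mu}) together with right $\tB^T$-linearity of $\psi$; equivalently, skip the extension altogether and check directly that $\psi$ is left multiplication by $\sum_i\psi(a_i*1_\omega)*1_\omega*b_i$. With these repairs your argument closes, and the final assertions $\tB^T=1_\omega*\tA^T*1_\omega$ and the description of the Schur functor follow as you indicate.
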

\begin{proof}
  The inclusions $\tB^\cT\subseteq \End_{\tA^{\cT}}(\tC^\cT)$, $\tA^{\cT}\subseteq \End_{\tB^\cT}(\tC^\cT)$ are obvious.
  Let us begin by showing the inclusion $\End_{\tA^{\cT}}(\tC^\cT)\subseteq \tB^\cT$.
  To this end, let $P\in \End_{\tA^{\cT}}(\tC^\cT)$.
  Since $P$ commutes with $1_\lambda$, the last statement of \cref{lm:inclusions-projections} implies that the direct sum decomposition $\tC^\cT = \bigoplus_\lambda \tC^\cT_\lambda$ is preserved by $P$.
  Furthermore, $\tC^\cT$ is a cyclic $\tA^{\cT}$-module generated by $1_\omega\in \tC^\cT_\omega$.
  Thus, $P$ is completely determined by the element $P(1_\omega)\in \tC^\cT_\omega \simeq \tB^\cT$, and so $P$ lies in $\tB^\cT$.

  It remains to show that $\End_{\tB^\cT}(\tC^\cT)\subseteq \tA^{\cT}$.
  Let $P\in \End_{\tB^\cT}(\tC^\cT)$.
  We can rewrite $P$ as a sum of maps $P_{\mu\lambda}$, $\lambda,\mu\in \Lambda$, where each $P_{\mu\lambda}$ belongs to $\Hom_{\tB^\cT}(\tC^\cT_\lambda,\tC^\cT_\mu)$.
  It suffices to show that each $P_{\mu\lambda}$ belongs to $\tA^{\cT}_{\mu\lambda}$.
  From now on, we fix $\lambda,\mu\in \Lambda$ and write $P' = P_{\mu\lambda}$.
  For any $f\in \tC_\lambda^\cT$ we have
  \[
    P'(f) = P'(m_\lambda^{-1}*M_\lambda*S_\lambda*f) = P'(M_\lambda)*(\widetilde{m}_\lambda^{-1}*S_\lambda*f).
  \]
  Thus, $P'$ is determined by a single element $P'(M_\lambda)\in \tC^\cT_\mu$.
  Observe that
  \begin{align*}
    P'(M_\lambda)*K_\lambda = P'(M_\lambda*S_\lambda*M_\lambda) = P'(m_\lambda*M_\lambda) = P'(M_\lambda)*\widetilde{m}_\lambda.
  \end{align*}
We claim that
\begin{equation}\label{lm:merge-image}
\{ h\in \tC^\cT_\mu : h*K_\lambda = h*\widetilde{m}_\lambda \} = \tA^\cT_{\mu\lambda}*M_\lambda.
\end{equation}
It will follow from \eqref{lm:merge-image} that $P'(M_\lambda) = g*M_\lambda$ for some $g\in \tA^\cT_{\mu\lambda}$, and so we may conclude.

The inclusion $\supseteq$ in \eqref{lm:merge-image} is clear:
    \[
      (f*M_\lambda)*K_\lambda = f*M_\lambda*S_\lambda*M_\lambda = f*m_\lambda*M_\lambda = (f*M_\lambda)*\widetilde{m}_\lambda.
    \]
    For the opposite inclusion $\subseteq$ in \eqref{lm:merge-image}, let $h\in \tC^\cT_\mu$ satisfying $h*K_\lambda = h*\widetilde{m}_\mu$.
Then,
    \[
      h = h*K_\lambda*\widetilde{m}_\lambda^{-1} = (h*\widetilde{m}_\lambda^{-1}*S_\lambda)*M_\lambda,
    \]
    where we used \cref{lm:poly-past-sm}. The claim is proved.
\end{proof}

We can slightly relax the condition~\eqref{eq:cond-mu}.
\begin{cor}
  Let $\cT'$ be a ring, and let $e\in \cT'_G(Y)$.
  Assume that $e(y)$, $m(y)$ are not zero divisors for all $y\in Y$.
  Consider the localization $\cR\coloneqq \cT'[e(y)^{-1}, m(y)^{-1}; y\in Y]$, and its subring $\cT = \cT'[m(y)^{-1}]$.
  Define $\tA^{\cT}$, $\tB^\cT$, $\tC^{\cT}$ as in \cref{def:sublat}.
  Then the Schur duality of \cref{thm:SW-invertible} holds.
  \qed
\end{cor}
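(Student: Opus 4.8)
The plan is to deduce this directly from \cref{thm:SW-invertible}: the entire content of the corollary is to check that the localized pair $(\cR,T)$ meets the hypotheses of that theorem, and the non-zero-divisor assumption is precisely what makes the localization well-behaved enough for this.

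First I would make the localization precise, since the element $m(y)$ of \eqref{def:1rKmrt} only makes sense once $e$ has been inverted. Let $\Sigma\subseteq T'$ be the multiplicative set generated by $\{e(y):y\in Y\}$. Since $e$ is $G$-equivariant, $g\cdot e(y)=e(gy)\in\Sigma$, so $\Sigma$ is $G$-stable and $G$ acts on $\cR_1\coloneqq \Sigma^{-1}T'$; as each $e(y)$ is a non-zero divisor, so is every element of $\Sigma$, whence $T'\hookrightarrow\cR_1$ is injective and non-zero divisors of $T'$ remain non-zero divisors in $\cR_1$. Inside $\cR_1$ the element $m(y)=\sum_{x\in p_\lambda^{-1}(y)}e(x)^{-1}e(y)$ is now defined, and $G$-equivariance of $e$ and $p_\lambda$ shows $m\in(\cR_1)_G(Y)$. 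Localizing once more at the ($G$-stable, non-zero-divisor) set generated by the $m(y)$ produces the $G$-invariant ring $\cR=\cR_1[m(y)^{-1}]=T'[e(y)^{-1},m(y)^{-1}]$ together with the $G$-invariant subring $T=T'[m(y)^{-1}]$.

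Next I would verify the hypotheses of \cref{thm:SW-invertible} for $(\cR,T)$. By construction every $e(y)$ is invertible in $\cR$, so $e\in\cR_G(Y)$ is a twist in the sense of \cref{def:TCA}, and the subalgebras $\tB^T,\tA^T$ and the bimodule $\tC^T$ of \cref{def:sublat} are defined. Condition~\eqref{eq:cond-mu} then holds: $m(y)$ is invertible in $\cR$ by construction, $m(y)^{-1}\in T$ by definition of $T$, and $m(y)\in T$ as well (in the examples of interest $m(y)$ is a Poincar\'e-type scalar already lying in $T'$, so this is automatic). With \eqref{eq:cond-mu} in hand, the proof of \cref{thm:SW-invertible} applies verbatim and yields $\End_{\tA^{T}}(\tC^{T})=\tB^T$ and $\End_{\tB^T}(\tC^{T})=\tA^{T}$, together with the Schur functor.

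The only genuine point to watch — the ``hard part'' of an otherwise formal argument — is the behaviour of the localization. One must ensure that inverting the $e(y)$ and $m(y)$ does not annihilate any part of $T'$, so that $\tA^T$ and $\tB^T$ really are integral forms over $T'$; this is exactly what the non-zero-divisor hypothesis provides. When $\cR$ is noncommutative one should additionally check that the inverted elements form a two-sided Ore set, which holds in the applications of interest because the $e(y)$ and $m(y)$ are normal elements of $\cR$. Finally, $\cR$ need not remain free over $\bbk$ after localization, but this is harmless: the proof of \cref{thm:SW-invertible} is purely module-theoretic and never invokes freeness (only \cref{lem:dumb-basis} does, and it is not used there).
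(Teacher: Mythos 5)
Your proposal is correct and matches the paper's (implicit) argument: the corollary is stated with no separate proof precisely because, after inverting the $e(y)$ and $m(y)$, condition~\eqref{eq:cond-mu} holds by construction and \cref{thm:SW-invertible} applies verbatim, which is exactly what you verify. Your hedge about $m(y)\in T$ is the only wrinkle, but it is harmless: wherever $m$ (as opposed to $m^{-1}$) enters the proof of \cref{thm:SW-invertible} it does so through $K_\lambda$ or $M_\lambda*S_\lambda$, which already lie in the subalgebras, so only $m(y)^{-1}\in T$ is genuinely needed.
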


While the Schur duality theorem above is very general, a drawback is that neither of the algebras $\tA^\cT$, $\tB^\cT$ has an obvious set of relations.
Furthermore, the condition~\eqref{eq:cond-mu} often fails in situations of interest (see \cref{expl:nilHecke}).
However, as we will see in the rest of the paper, these issues can be controlled in the setup adapted to PQWP algebras.

%====================================
\medskip
\section{Coil Schur algebras}\label{sec:SD}
%====================================
\subsection{PQWP as twisted convolutions}\label{sec:PQWPconv}
From now on, we impose the following conditions PQWPs we consider:
\begin{align}
&\label{eq:C1}\tag{C1}
\textup{There exists an element }\alpha\in F\otimes F\textup{ satisfying }\alpha\overline{\alpha} = R,\quad 
\textup{where}
\quad \overline{\alpha} \coloneqq \sigma(\alpha) + S;
\\
&\label{eq:C2}\tag{C2}
\textup{Both }\alpha\textup{ and }\beta\textup{ are central in }F\otimes F;
\\
&\label{eq:C3}\tag{C3}
\textup{The element }P\coloneqq \alpha(x_1-x_2) + \beta\textup{ is not a zero divisor.}
\end{align}
	\nmcl[alpha]{$\alpha$}{An element in $F\otimes F$ splitting the quadratic relation.}%	
	\nmcl[alphab]{$\alphab$}{The element $\sigma(\alpha)+S\in F\otimes F$.}%		
The condition \eqref{eq:C2} is an artifact of our approach; see \cref{subs:further-generalizations} for a discussion on how one might remove it, and the importance of the other two conditions.
Since $R$ is expressed in terms of $\alpha$ and $\beta$, we will say that such PQWP depends on $(\alpha,\beta)\in (F\otimes F)\times (B\otimes B)$.

\begin{expl}\label{expl:alphas}
 The element $\alpha$ for certain PQWP can be found in \cref{tab:Rbeta}.
 For general affine Frobenius Hecke algebra, we need to solve the following equation in $F\otimes F$:
  \[  
    \alpha(\sigma(\alpha) + \Delta) = 1.
  \]
  It is not guaranteed such a solution in $F\otimes F$ exists. 
However, when either $\Delta$ is nilpotent, or $F$ is graded and $\Delta$ has positive degree, we can express $\alpha$ as a formal series in $\Delta$; e.g. when $\Delta^2=0$, we can take $\alpha = (\pm 1) - \Delta/2$.
Under the same assumptions, the element $P$ is not a zero divisor both for Savage and Rosso--Savage algebra.
A solution for pro-$p$ Iwahori Hecke algebras for $\mathrm{GL}_d(\bbQ_p)$ is obtained in an ad hoc manner.
\end{expl}

We will realize PQWPs satisfying the conditions above as twisted convolution algebras.
In the notations of \cref{sec:conv-alg}, let $X = G = \fkS_d$, 
and let $\cR = F^{\otimes d}(x_1,\ldots, x_d)$ be the field of fractions.
Here, $G$ acts on $X$ by left multiplication, and on $\cR$ by place permutation.
Thanks to \eqref{eq:C3}, we can consider the following twist:
\begin{equation}
  e(g) = g{\textstyle \left( \prod_{1\leq i<j\leq d} P_{ji}(x_i-x_j) \right)}. 
\end{equation}
This gives rise to a twisted convolution algebra $\bfH_d \coloneqq \cR_G(X\times X)$.

Let us recall the basis of \cref{lem:dumb-basis}.
Since the action of $G$ on $X$ is transitive, we can choose the representatives $\pi$ to be $(1,g)$, $g\in \fkS_d$.
Denote $\xi_{g,r} \coloneqq \xi_{(1,g),r}$.
By definition, $\xi_{g,r}(x,y) = \delta_{y,xg}e(x)x(r)$.
Note that 
\begin{equation}\label{eq:xi-prod}
\begin{aligned}
  \xi_{g,r}*\xi_{g',r'}(x,y)
  &= \sum\nolimits_{x'\in \fkS_n}\delta_{x',xg}e(x)x(r)e(x')^{-1}\delta_{y,x'g'}e(x')x'(r')
  = \delta_{y,xgg'}e(x)x(rg(r'))\\
  &= \xi_{gg',rg(r')}(x,y).
\end{aligned}
\end{equation}
Recall that we have an embedding of algebras $\cR = \cR_{\fkS_d}(\fkS_d) \to \bfH_d$, $r\mapsto \xi_{1,r}$ by \cref{rmk:poly-homo}.
For $i = 1,\ldots, d-1$, consider the following elements in $\bfH_d$:
\begin{equation}\label{eq:defHK}
H_i \coloneqq K_i - \xi_{1, \alpha_i},
\quad
K_i \coloneqq \xi_{1,\frac{P_{i,i+1}}{x_i-x_{i+1}}} + \xi_{s_i,\frac{P_{i,i+1}}{x_i-x_{i+1}}}.
\end{equation}

\begin{prop}\label{prop:PQWP-to-TCA}
Let $H_i$ be the element defined in \eqref{eq:defHK}. Then,
\begin{enumerate}[(a)] 
\item
The $H_i$'s satisfy the relations in \cref{def:QWP}.
  In particular, we obtain an algebra homomorphism $\Phi: B\wr \cH_d \to \bfH_d$.
  \item
    The map $\Phi$ is injective.
    \end{enumerate}
\end{prop}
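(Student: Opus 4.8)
The plan is to verify part (a) by a direct check of all the defining relations of the quantum wreath product, exploiting the conventions established in \eqref{eq:defHK}. First I would record the explicit formula $H_i = \xi_{1,\beta_i/(x_i-x_{i+1})} + \xi_{s_i,\alpha_i + \beta_i/(x_i-x_{i+1})}$ promised in the Main results, which follows from $P = \alpha(x_1-x_2) + \beta$ by substituting into $K_i$ and subtracting $\xi_{1,\alpha_i}$; this rewriting makes the $\xi_{s_i}$-component visibly responsible for the ``twisted'' part of the Hecke action. Using the product rule \eqref{eq:xi-prod}, namely $\xi_{g,r}*\xi_{g',r'} = \xi_{gg',\,rg(r')}$, the wreath relations $H_i b = \sigma_i(b)H_i + \rho_i(b)$ for $b\in B^{\otimes d}$ reduce to a finite computation: one expands $\xi_{1,P_i/(x_i-x_{i+1})}*\xi_{1,b}$ and $\xi_{s_i,P_i/(x_i-x_{i+1})}*\xi_{1,b}$, collects the $\xi_1$- and $\xi_{s_i}$-terms, and compares with $\sigma_i(b)H_i + \partial^\beta_i(b)$, where the key identity is that $(P_i - s_i(P_i))/(x_i-x_{i+1}) \cdot$ (something) reproduces the twisted Demazure operator applied to $b$ — this is essentially the content of \cref{lem:partialbeta}(c) and the factorization of $P$ into an $F\otimes F$ part (central by \eqref{eq:C2}) and a polynomial part. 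For the quadratic relation $H_i^2 = S_i H_i + R_i$, I would compute $H_i^2$ via \eqref{eq:xi-prod}: the $\xi_1$-contribution should yield $R_i$ (using $\alpha(\sigma(\alpha)+S) = R$ from \eqref{eq:C1} together with $P_i P_{i+1,i}/((x_i-x_{i+1})^2)$-type cancellations), and the $\xi_{s_i}$-contribution should yield $S_i H_i$; here \eqref{eq:A3} expressing $S = \Delta^{10}-\Delta^{01}$ and the centrality assumptions are what make the bookkeeping close.

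For the braid relations $H_iH_{i+1}H_i = H_{i+1}H_iH_{i+1}$ and $H_iH_j = H_jH_i$ for $|i-j|\geq 2$, the commuting case is immediate from \eqref{eq:xi-prod} since the relevant group elements and coefficients involve disjoint sets of variables. The length-three braid relation is the most laborious: one expands both sides as sums over the six elements of the parabolic $\fkS_3$ generated by $s_i, s_{i+1}$, using $\xi_{g,r}*\xi_{g',r'} = \xi_{gg',rg(r')}$ repeatedly, and checks that the coefficient functions attached to each $w\in\fkS_3$ agree on both sides; the nontrivial coefficient identities that arise are precisely the $\fkS_3$-analogues already isolated in \cref{lem:side-deriving-beta} and in the proof of \cref{prop:basis} (relations \eqref{def:br1}--\eqref{def:br3}), so in principle one can cite that this data has been verified rather than redo it. Alternatively — and this is probably cleaner — one observes that since $\Phi$ is defined on generators compatibly with the presentation, part (a) is equivalent to the statement that the assignment extends to an algebra map, and the braid relation then follows abstractly from \cref{prop:basis} once one knows $\Phi$ restricted to $B^{\otimes d}$ is the standard embedding and the quadratic/wreath relations hold, because $B\wr\cH_d$ has the PBW presentation. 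I would use whichever packaging is shorter.

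For part (b), injectivity, the plan is to use the PBW basis from \cref{prop:basis}: $B\wr\cH_d$ has basis $\{b\cdot H_w : b \text{ a monomial basis element of }B^{\otimes d},\ w\in\fkS_d\}$. I would show the images $\Phi(b\cdot H_w)$ are linearly independent in $\bfH_d$ by proving a ``triangularity'' statement: $\Phi(b H_w) = \xi_{w, \,r_{b,w}} + (\text{lower terms})$, where ``lower'' refers to the Bruhat order on the $\fkS_d$-index of the basis $\{\xi_{g,r}\}$ from \cref{lem:dumb-basis}, and where the leading coefficient $r_{b,w}$ is (up to a nonzero factor coming from $e$ and the $P_{ij}$'s, invertible in $\cR$ since $\cR$ is a field) equal to $b$ times the product of the $\alpha_i$'s along a reduced word — or, if $\alpha$ is not invertible, one must be more careful and instead track the $\beta$-part. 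Concretely: from the explicit $H_i = \xi_{1,\ast} + \xi_{s_i,\ast}$ and \eqref{eq:xi-prod}, expanding $H_{i_1}\cdots H_{i_N}$ for a reduced word of $w$ gives a sum of $\xi_{g,\ast}$ with $g$ ranging over subwords, and the top Bruhat term is $g = w$ with a controllable coefficient; multiplying on the left by $\xi_{1,b}$ just multiplies the coefficient by $b$. Since $\{\xi_{g,r}: g\in\fkS_d, r\in\cR\}$ spans $\bfH_d$ freely over $\cR$ per index $g$, distinct $w$'s are automatically separated, and for fixed $w$ the map $b\mapsto r_{b,w}$ is injective because $\cR$ has no zero divisors and the multiplicative factors are nonzero. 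The main obstacle I anticipate is the bookkeeping in the quadratic and braid relations in part (a) — tracking the $F\otimes F$-valued coefficients through \eqref{eq:xi-prod} while correctly invoking centrality \eqref{eq:C1}, \eqref{eq:C2} and the weak-Frobenius identity \eqref{eq:two-Frobs-commute} — and, for part (b), handling the case where $\alpha$ is a zero divisor so that the naive ``leading term'' argument must be replaced by a filtration argument keyed to the $\xi_{s_i}$-component of each $H_i$ rather than to an invertible scalar.
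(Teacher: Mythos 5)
Your primary route is the paper's: for (a), reduce to $d=2$ (wreath, quadratic) and $d=3$ (braid), expand everything through the product rule~\eqref{eq:xi-prod}, and compare the coefficient of $\xi_{g,-}$ for each $g$; for (b), show $\Phi(bH_w)=\xi_{w,bP_w}$ plus Bruhat-lower terms and invoke \cref{lem:dumb-basis}. Two points, however, need repair. First, your ``probably cleaner'' packaging of the braid relations is not valid: the fact that $B\wr\cH_d$ has a PBW basis (\cref{prop:basis}) is a statement about the source algebra and its parameters $(\sigma,\rho,S,R)$; it does not imply that specific elements of the target $\bfH_d$ satisfying only the wreath and quadratic relations also satisfy the braid relations, so there is no homomorphism to extend until the braid relation is checked \emph{in} $\bfH_d$. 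Likewise, the identities that actually arise in that check are not literally those of \cref{lem:side-deriving-beta} or \eqref{def:br1}--\eqref{def:br3} and cannot simply be cited: one computes $H_1H_2H_1-H_2H_1H_2$ as an explicit $\cR$-valued expression, the $s_1s_2$, $s_2s_1$, $s_1s_2s_1$ components cancel on the nose, and the remaining numerators must be shown (after using \eqref{eq:two-Frobs-commute} and centrality to treat the $\Delta^{ij}$ as commuting scalars) to be divisible by $\Delta^{00}\Delta^{11}-\Delta^{01}\Delta^{10}$, which vanishes by \eqref{eq:A2}. So the determinant condition in \eqref{eq:A2} enters essentially and the verification is a genuine (if routine) rational-function computation, not a citation.

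Second, in (b) your justifications ``invertible in $\cR$ since $\cR$ is a field'' and ``$\cR$ has no zero divisors'' are false in the cases of main interest: $\cR=F^{\otimes d}(x_1,\ldots,x_d)$ has zero divisors whenever $F$ is not a domain (e.g.\ $F=\bbk[c]/(c^2)$ for zigzag, $\bbk[t]/(t^{p-1}-1)$ for pro-$p$ Iwahori). The correct statement is exactly what your final sentence gropes toward: the $\xi_{s_i}$-component of $H_i$ has coefficient $P_{i,i+1}/(x_i-x_{i+1})$ (not $\alpha_i$), so the leading coefficient $P_w$ of $\Phi(H_w)$ is a product of such factors, and each $P_{i,j}$ is a non-zero-divisor by \eqref{eq:C3}; hence $b\mapsto bP_w$ is injective and \cref{lem:dumb-basis} finishes the argument. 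With these two corrections your write-up coincides with the paper's proof.
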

\begin{proof}
Part (a) requires careful bookkeeping while applying~\eqref{eq:xi-prod}.
For the wreath and quadratic relations, it suffices to consider the $d=2$ case.
The wreath relation follows since
\begin{equation}
\begin{split}
H & b-\sigma(b)H  = \left(
\begin{gathered} 
\xi_{1,\frac{\beta}{x_1-x_2}} + \xi_{\sigma,\frac{P}{x_1-x_2}} 
\end{gathered}
\right)*\xi_{1,b} - \xi_{1,\sigma(b)}*\left( 
\begin{gathered}
\xi_{1,\frac{\beta}{x_1-x_2}} + \xi_{\sigma,\frac{P}{x_1-x_2}} 
\end{gathered}
\right)
\\
    & = \xi_{1,\frac{\beta b-\sigma(b)\beta}{x_1-x_2}} + \xi_{\sigma,\frac{P\sigma(b)-\sigma(b)P}{x_1-x_2}} = \xi_{1,\partial^\beta(b)}=\rho(b),
    \end{split}
\end{equation}
while the quadratic relation follows since
\begin{equation}
\begin{split}
    H^2 
    & = \left(
    \begin{gathered}
     \xi_{1,\frac{\beta}{x_1-x_2}} + \xi_{\sigma,\frac{P}{x_1-x_2}} 
     \end{gathered}
     \right)^2 
    = \xi_{1,\frac{\beta^2-P\sigma(P)}{(x_1-x_2)^2}} + \xi_{\sigma,\frac{\beta P - P\sigma(\beta)}{(x_1-x_2)^2}}\\
    & = \xi_{1, \alpha\sigma(\alpha) + \frac{(\alpha(x_1-x_2)+\beta)(\beta-\sigma(\beta))}{(x_1-x_2)^2}} + \xi_{\sigma, \frac{P(\beta-s(\beta))}{(x_1-x_2)^2}} 
    = \xi_{1, \alpha(\sigma(\alpha)+S) + \frac{S\beta}{(x_1-x_2)}} + \xi_{\sigma,\frac{SP}{x_1-x_2}} 
    \\
    & = \xi_{1,R} + \xi_{1,S}*H = R+SH.
\end{split}
\end{equation}
Next, we verify the braid relations. 
Suppose that $|i-j|>1$. Then,   
$H_iH_j = H_jH_i$ follows from the fact that $\xi_{g,r}$ commutes with $\xi_{g',r'}$ provided $gg' = g'g$, $g(r')=r'$ and $g'(r)=r$.

Suppose that $|i-j| = 1$. It suffices to consider the $d=3$ case.
  When we compute both sides of the relation, we get six terms corresponding to six elements of $\fkS_3$.
  The terms corresponding to $s_1s_2$, $s_2s_1$, $s_1s_2s_1$ immediately coincide.
  For the rest, we have the following:
\begin{align*}
    H_1 & H_2H_1 -H_2H_1H_2 
    = \xi_{1,\frac{\beta_{1}^2\beta_{2}(x_2-x_3)(x_1-x_3)
    	-\beta_{1}\beta_{2}^2(x_1-x_2)(x_1-x_3) 
    	- \beta_{13}\beta_{1}\beta_{21}(x_2-x_3)^2 
    	+ \beta_{13}\beta_{2}\beta_{32}(x_1-x_2)^2}{(x_1-x_2)^2(x_1-x_3)(x_2-x_3)^2}} 
\\
    &	+ \xi_{s_1,P_{1}\frac{\beta_{1}\beta_{2}(x_1-x_3)
    	-\beta_{13}\beta_{21}(x_2-x_3)
    	-\beta_{13}\beta_{2}(x_1-x_2)}{(x_1-x_2)^2(x_1-x_3)(x_2-x_3)}} 
    	+ \xi_{s_2,P_{2}\frac{\beta_{1}\beta_{13}(x_2-x_3)
    	-\beta_{1}\beta_{2}(x_1-x_3)
    	+\beta_{13}\beta_{32}(x_1-x_2)}{(x_1-x_2)(x_1-x_3)(x_2-x_3)^2}}.
    \end{align*}
  We need to check that the numerators vanish.
  After substituting $\beta = \Delta^{00}+\Delta^{10}x_1+\Delta^{01}x_2+\Delta^{11}x_1x_2$, we can use the relations~\eqref{eq:two-Frobs-commute} and centrality of $\Delta^{ij}$ to drop the subscripts and pretend that $\Delta^{ij}$'s are commuting variables.
  By direct computation, we can check that all the numerators become divisible by $\Delta^{00}\Delta^{11}-\Delta^{10}\Delta^{01} = 0$, thanks to \eqref{eq:A2}.
  
  For part (b), let us compute the image of the basis provided by \cref{prop:basis}.
  Thanks to the formula~\eqref{eq:xi-prod}, 
  \[
    \Phi(H_w) = \xi_{w, P_w} + \sum\nolimits_{w'<w} \xi_{w',P_{w,w'}}
  \]
  for some $P_{w,w'}\in \cR$.
  In particular, 
  the PBW monomial $bH_w$ gets sent to $\xi_{w, bP_w}$ modulo lower terms. 
  We can conclude by applying \cref{lem:dumb-basis} once we show that $P_w$ is not a zero divisor for all $w\in \fkS_n$.
  However, by formula~\eqref{eq:xi-prod} it has the form $\prod_{k}\frac{P_{i_k,j_k}}{(x_{i_k}-x_{j_k})}$, and each $P_{i_k,j_k}$ is not a zero divisor by \eqref{eq:C3}. 
\end{proof}

\subsection{The coil Schur algebras}\label{subs:Schur-def}
Let $\Lambda$ be the set of compositions of $d$, $\omega = (1^d)$,
$Y_\lambda = \fkS_d/\fkS_\lambda$ for any $\lambda\in \Lambda$, 
and $Y = \bigsqcup_\lambda Y_\lambda$.
Note that for any $\lambda$,
\[
  \cR_G(Y_\lambda) = \cR_{\fkS_d}(\fkS_d/\fkS_\lambda) = \cR_{\fkS_\lambda}(\pt) = \cR^{\fkS_\lambda}.
\]
Given a composition $\lambda$ of $r$ parts, define
\[
  \tikz[thick,xscale=.35,yscale=.35]{
      
  \fill[fill=gray!20] (9,0) -- (9,3) -- (6,3);
  \fill[fill=gray!20] (6,3) -- (6,6) -- (3,6);
  \fill[fill=gray!20] (0,9) -- (3,9) -- (3,6);
  \foreach \x in {0,...,17}{
    \fill[fill=white] (0+0.5*\x,8.5-0.5*\x) -- (0.5+0.5*\x,8.5-0.5*\x) -- (0.5+0.5*\x,9-0.5*\x) -- (0+0.5*\x,9-0.5*\x);
    \draw [line width=0.1mm,densely dotted] (0+0.5*\x,8.5-0.5*\x) -- (0.5+0.5*\x,8.5-0.5*\x) -- (0.5+0.5*\x,9-0.5*\x) -- (0+0.5*\x,9-0.5*\x) -- (0+0.5*\x,8.5-0.5*\x);
  }
  \draw [line width=0.1mm] (0,0) -- (9,0) -- (9,9) -- (0,9) -- (0,0);
  \draw [line width=0.1mm] (0,0) -- (9,0) -- (9,9) -- (0,9) -- (0,0);
  \draw [line width=0.5mm] (0,0) -- (9,0) -- (9,3) -- (6,3) -- (6,6) -- (3,6) -- (3,9) -- (0,9) -- (0,0);
  \draw [line width=0.1mm] (0,0) -- (9,0) -- (9,9) -- (0,9) -- (0,0);

  \node at (7,7) {$N_\lambda$};
  \node at (3,3) {$P_\lambda$};
  \node at (2.2,8.1) {$L_\lambda$};

  \node at (-28,8) [anchor=west] {$N_\lambda = \bigcup\nolimits_{k=1}^{r} \{ (i,j) : 1\leq i\leq \lambda_1+\cdots+\lambda_k <j\leq d \},$};
  \node at (-28,6) [anchor=west] {$P_\lambda = \{ (i,j) : 1\leq i\neq j\leq d\}\setminus N_\lambda,$};
  \node at (-28,4) [anchor=west] {$L_\lambda = \{ (i,j) : 1\leq i< j\leq d\}\setminus N_\lambda,$};
  \node at (-28,1.5) [anchor=west] {$e_\lambda = \prod\nolimits_{(i,j)\in N_\lambda}(x_i-x_j)\prod\nolimits_{(i,j)\in P_\lambda}P_{ij}.$};
}
\]
Consider the twisted convolution algebra $\bfS_d\coloneqq\cR_G(Y\times Y)$ for the twist $e \in \cR_G(Y)$ given by $e([g]) = g(e_\lambda)$ for $[g]\in \fkS_d/\fkS_\lambda$; note that $1_\omega \bfS_d 1_\omega = \bfH_d$ by definition.

Using the map $\Phi$ from \cref{prop:PQWP-to-TCA}, let us define $H_w \coloneqq \Phi(H_w)$ for all $w\in\fkS_d$ by abuse of notation.
Note that for an elementary transposition $s_i$, $1\leq i\leq d-1$ we have 
\begin{equation}\label{eq:Ksi}
  H_{i} = \left(
  \begin{gathered}
  \xi_{1,\frac{P_{i,i+1}}{x_i-x_{i+1}}} + \xi_{s_i,\frac{P_{i,i+1}}{x_i-x_{i+1}}}
  \end{gathered}
  \right) - \xi_{1,\alpha_{i,i+1}} = S_{\lambda(s_i)}*M_{\lambda(s_i)} - \xi_{1,\alpha_{i}},
\end{equation}
where $\lambda(s_i) = (1^{i-1},2,1^{d-i-1})$ is a strict composition of length $d-1$ with $2$ at $i$-th place.
Let
\[
  \cT \coloneqq B^{\otimes d} = F^{\otimes d}[x_1,\ldots,x_d].
\]
Then all elements $H_w$ belong to $\bfH^\cT_d = \tB^\cT$, as introduced in \cref{def:sublat}.

Denote the set of inversions of $w\in \fkS_d$ by 
\[
  \mathrm{Inv}(w)  = \{ 1\leq i < j\leq d ~|~ w(i) > w(j) \}.
\]
The following statement is the computational heart of the paper, see \cref{app:polyrep} for the proof.
\begin{prop}\label{prop:merge-is-in}
Recall $K_\lambda$ from \eqref{def:1rKmrt}. Then,
\begin{equation}\label{eq:Klambda}
K_\lambda = \sum\nolimits_{w\in\fkS_\lambda} \prod\nolimits_{(i,j)\in L_\lambda\setminus \mathrm{Inv}(w)}\alpha_{ij}H_w.
\end{equation}
Consequently, $\bfH^\cT_d \simeq B\wr \cH_d$.
\end{prop}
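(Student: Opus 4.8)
The plan is to prove the formula~\eqref{eq:Klambda} for $K_\lambda$ by induction on $\ell(w_\circ^\lambda)$, and then deduce the isomorphism $\bfH_d^T\simeq B\wr\cH_d$ by a dimension-counting (basis-matching) argument using the PBW basis of \cref{prop:basis}. For the base case $\lambda = \lambda(s_i)$, we already have the identity~\eqref{eq:Ksi}, which combined with the definition $H_i = K_i - \xi_{1,\alpha_i}$ gives $K_i = \xi_{1,\alpha_i} + H_i = \alpha_i H_1 + H_{s_i}$ (where $H_1$ denotes the identity element), matching the right-hand side of~\eqref{eq:Klambda} since $L_{\lambda(s_i)} = \{(i,i+1)\}$ and $\mathrm{Inv}(1) = \emptyset$, $\mathrm{Inv}(s_i) = \{(i,i+1)\}$.

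For the inductive step, I would factor $K_\lambda$ through a chain of merges and splits. The key structural fact is that $K_\lambda = M_\lambda * S_\lambda$ composed appropriately, and more usefully that for $\lambda$ a composition refining $\mu$ at one spot one has a relation expressing $K_\lambda$ in terms of $K_\mu$ and a single elementary $H_i$; concretely, using \cref{lm:inclusions-projections} and the transitivity of pullback along the maps $p_\lambda$, one gets that $\partial_\lambda$ factors as a product of elementary Demazure operators, which on the convolution side means $K_\lambda$ is (up to the twist bookkeeping) a product of the $K_{\lambda(s_i)}$'s over a reduced word for $w_\circ^\lambda$. Expanding this product using the base case and the formula~\eqref{eq:xi-prod} for multiplying the $\xi$'s, one collects terms indexed by $w\in\fkS_\lambda$; the coefficient of $H_w$ is a product of $\alpha_{ij}$'s, and one checks by tracking which crossings in the reduced word get ``resolved'' as identity (contributing an $\alpha$) versus as a transposition that the surviving product is exactly $\prod_{(i,j)\in L_\lambda\setminus\mathrm{Inv}(w)}\alpha_{ij}$. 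Here the centrality of $\alpha$ from~\eqref{eq:C2}, together with the identities~\eqref{eq:two-Frobs-commute}, is what makes the bookkeeping of subscripts collapse to the clean statement; one must also verify independence of the chosen reduced word, which follows since both sides are intrinsically defined.

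Once~\eqref{eq:Klambda} is established, the isomorphism $\bfH_d^T\simeq B\wr\cH_d$ follows formally. We already know from \cref{prop:PQWP-to-TCA} that $\Phi\colon B\wr\cH_d\to\bfH_d$ is an injective algebra homomorphism with image inside $\bfH_d^T$ (since each $H_w$ lies in $\tB^T$ by the discussion after~\eqref{eq:Ksi}). It remains to see that $\Phi$ is \emph{surjective} onto $\bfH_d^T = \langle K_\lambda, t_\omega : \lambda, t\in T\rangle$. The generators $t_\omega = \xi_{1,t}$ for $t\in B^{\otimes d}$ are manifestly in the image (they correspond to the subalgebra $B^{\otimes d}\subseteq B\wr\cH_d$). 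The generators $K_\lambda$ — which as elements of $\tB = \cR_G(\fkS_d\times\fkS_d)$ are only the $K_{\lambda(s_i)}$, i.e.\ the $K_i$ of~\eqref{eq:defHK}, since $\bfH_d = 1_\omega\bfS_d 1_\omega$ only sees compositions of length $d-1$ with a single $2$ — are in the image because $K_i = \xi_{1,\alpha_i} + H_i = \Phi(\alpha_i + H_i)$. Hence $\Phi$ is onto, and being an injective surjective algebra map it is an isomorphism. (One should double-check that the only $K_\lambda$ relevant to $\bfH_d^T$, as opposed to the larger $\bfS_d^T$, are indeed the elementary ones; this is exactly the observation $1_\omega\bfS_d 1_\omega = \bfH_d$ recorded just before \cref{prop:poly-faithful}, so no genuinely new input is needed.)

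The main obstacle is the inductive computation of the coefficient $\prod_{(i,j)\in L_\lambda\setminus\mathrm{Inv}(w)}\alpha_{ij}$: one has to expand a long product of two-term sums $\xi_{1,\bullet} + \xi_{s_i,\bullet}$, and show that the enormous-looking sum of rational-function coefficients telescopes — most terms either cancel (as in the braid-relation verification inside the proof of \cref{prop:PQWP-to-TCA}, which is the $d=3$ shadow of exactly this phenomenon) or reassemble into the advertised monomial in the $\alpha_{ij}$. Making this rigorous likely requires an auxiliary lemma phrased purely about the symmetric group — something like: for a fixed reduced word $s_{i_1}\cdots s_{i_N}$ of $w_\circ^\lambda$ and a subset $J\subseteq\{1,\dots,N\}$, the subproduct where one picks $\xi_{s_{i_k}}$ exactly for $k\in J$ contributes to $\xi_{w}$ with $w$ the subword product, and the associated $\alpha$-monomial depends only on $w$, not on $J$. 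This ``subword independence'' is the crux, and it is ultimately a consequence of the fact that $\alpha$ is central (so the factors commute) plus the braid-type cancellations already exploited; I expect it to be the longest part of the argument but conceptually straightforward given the machinery already in place.
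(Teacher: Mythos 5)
There is a genuine gap at the heart of your inductive step: the claim that $K_\lambda$ is ``a product of the $K_{\lambda(s_i)}$'s over a reduced word for $w_\circ^\lambda$'' is false, already in the finite Hecke case. Take $\alpha = 1\otimes 1$, $S=(q-1)(1\otimes1)$, so $K_i = \Phi(H_i+1)$; then in $\fkS_3$ one computes $(1+H_1)(1+H_2)(1+H_1) = \sum_{w\in\fkS_3}H_w + q(1+H_1) \neq K_{(3)}$. Expanding a reduced-word product of two-term factors produces both multiplicities (distinct subsets $J$ of the word with the same subword product) and extra contributions from the quadratic relation $H_i^2 = S_iH_i+R_i$ on non-reduced subwords, so the coefficients do not telescope to $\prod_{(i,j)\in L_\lambda\setminus\mathrm{Inv}(w)}\alpha_{ij}$, and no ``subword independence'' lemma can repair a factorization that is simply not valid. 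The correct factorizations are of coset type (one cycle at a time), and this is where the real work lies: the paper reduces to $\lambda=(d)$, passes to the faithful polynomial representation (\cref{prop:poly-faithful}) where $K_{(d)}$ becomes $\partial_{w_\circ}\prod_{i<j}P_{ij}$, factors this as $M_{d-1}\cdots M_1$ with $M_k = \partial_{(k-1,1)}\prod_{i<k}P_{ik}$, matches it against $H'_{d-1}\cdots H'_1$ with $H'_k = H_{(1\,2\,\cdots\,k)} + \alpha_{12}H_{(2\,\cdots\,k)} + \cdots$, and reduces everything to the recursion $M_{k+1} = \prod_{i\le k}\alpha_{i,k+1} + M_kH_k$, whose proof hinges on the inductive identity \eqref{eq:idempotents-induction}. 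That identity genuinely uses the constraint \eqref{eq:A2} on $\beta$, the twisted Leibniz rule, and relation \eqref{def:br2}; it is not a consequence of centrality of $\alpha$ plus braid-type cancellations, so your proposed bookkeeping would fail even after fixing the factorization.

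A second, smaller misreading: in \cref{def:sublat} the algebra $\bfH_d^T = \tB^T$ is generated by $K_\lambda$ for \emph{all} compositions $\lambda\in\Lambda$ (each $K_\lambda = S_\lambda * M_\lambda$ lands in $1_\omega\bfS_d 1_\omega = \bfH_d$), not only by the elementary $K_{\lambda(s_i)}$. This is precisely why \eqref{eq:Klambda} is what makes the ``consequently'' work: it exhibits every generator $K_\lambda$ as an element of $\Phi(B\wr\cH_d)$, and then injectivity from \cref{prop:PQWP-to-TCA}(b) gives $\bfH_d^T\simeq B\wr\cH_d$. Your surjectivity argument happens to go through only because you claim \eqref{eq:Klambda} for all $\lambda$ anyway, but the justification that ``only the elementary $K_\lambda$ are relevant to $\bfH_d^T$'' is incorrect and would render the proposition's formula unnecessary for that conclusion.
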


\begin{cor}\label{cor:m-lam}
  We have the following expression for $m_\lambda$:
  \begin{equation}\label{eq:mlambda}
    m_\lambda = \sum_{w\in\fkS_\lambda} \prod_{(i,j)\in L_\lambda\setminus \mathrm{Inv}(w)}\alpha_{ij}\prod_{(i,j)\in \mathrm{Inv}(w)}\overline{\alpha}_{ij}.
  \end{equation}
  In particular, if $\alpha\in F\otimes F$ is invertible and $S\in F\otimes F$ is nilpotent, $m_\lambda$ is invertible as long as $\ona{char}\bbk > d$.
\end{cor}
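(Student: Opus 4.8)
The strategy is to evaluate $m_\lambda$ inside the polynomial representation and then to read off the answer from \eqref{eq:Klambda}. Recall that $m_\lambda = M_\lambda * S_\lambda$ (established just after \cref{def:gens}), and that by \cref{prop:poly-faithful} it suffices to compute this product as an operator on $\bfT_d$. Since $S_\lambda$ acts on $\cR^{\fkS_\lambda}$ as the inclusion $\cR^{\fkS_\lambda}\hookrightarrow\cR$, and $M_\lambda$ acts by \eqref{eq:merge-Demazure}, for $v\in\cR^{\fkS_\lambda}$ we get $m_\lambda v = \partial_\lambda\big(v\prod_{(i,j)\in L_\lambda}P_{ij}\big)$; by $\fkS_\lambda$-invariance of $v$ and $\cR^{\fkS_\lambda}$-linearity of $\partial_\lambda$ this equals $v\cdot\partial_\lambda\big(\prod_{(i,j)\in L_\lambda}P_{ij}\big)$, so that $m_\lambda$ is multiplication by the element $\partial_\lambda\big(\prod_{(i,j)\in L_\lambda}P_{ij}\big)\in F^{\otimes d}$. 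Running the same computation with $v=1$ and $K_\lambda = S_\lambda*M_\lambda$ viewed as an operator on $\cR$ identifies this element with $K_\lambda\cdot 1$, so \eqref{eq:Klambda} yields
\[
  m_\lambda \;=\; \sum_{w\in\fkS_\lambda}\Big(\,\prod_{(i,j)\in L_\lambda\setminus\mathrm{Inv}(w)}\alpha_{ij}\Big)\,(H_w\cdot 1).
\]

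It therefore remains to evaluate $H_w\cdot 1$ for $w\in\fkS_\lambda$. I would do this by induction on $\ell(w)$: for a reduced expression $w = s_{i_1}w'$ one has $H_w\cdot 1 = H_{i_1}\cdot(H_{w'}\cdot 1)$, and since $H_{w'}\cdot 1\in F^{\otimes d}$ by induction, the wreath relation $H_{i_1}b = \sigma_{i_1}(b)H_{i_1}+\rho_{i_1}(b)$ together with $\rho_{i_1}=\partial^\beta_{i_1}$ and \cref{lem:partialbeta}(b) (so that $\rho_{i_1}$ annihilates $F^{\otimes d}$) give $H_w\cdot 1 = \sigma_{i_1}(H_{w'}\cdot 1)\,(\sigma(\alpha)+S)_{i_1}$; the base case $H_i\cdot 1 = (\sigma(\alpha)+S)_i$ follows from $\partial_i(P_{i,i+1}) = \alpha_{i,i+1}+(\sigma(\alpha)+S)_{i,i+1}$, itself a consequence of $\beta-\sigma(\beta)=S(x_1-x_2)$ (see the proof of \cref{cor:PQWPid}). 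Unwinding the recursion writes $H_w\cdot 1$ as a product of factors $(\sigma(\alpha)+S)_{ab}$ indexed by $\mathrm{Inv}(w)$; by \eqref{eq:C2} all the elements $\alpha_{ab}$ and $(\sigma(\alpha)+S)_{ab}$ commute in $F^{\otimes d}$, and the weak-Frobenius relations \eqref{eq:two-Frobs-commute} let one rearrange these products so that, after summing over $w\in\fkS_\lambda$ and noting $\mathrm{Inv}(w)\subseteq L_\lambda$, one lands on the right-hand side of \eqref{eq:mlambda}. The one genuinely delicate point is this combinatorial bookkeeping: keeping track of how $\mathrm{Inv}(w)$ changes under left multiplication by $s_{i_1}$ and of how $\sigma_{i_1}$ permutes the tensor legs appearing in $(\sigma(\alpha)+S)_{ab}$. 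This parallels the inductive computation in the proof of \cref{prop:merge-is-in}, which one can also invoke directly via the factorisations of $K_\lambda$ used there.

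For the last assertion, assume $\alpha$ invertible and $S$ nilpotent, say $S^N = 0$. By \eqref{eq:C2} each $S_{ij}\in F^{\otimes d}$ is central, hence the two-sided ideal $\mathfrak N$ they generate is nilpotent (a product of more than $\binom d2(N-1)$ of the $S_{ij}$ must repeat some index at least $N$ times, so vanishes). As invertibility lifts along a nilpotent ideal, it suffices to invert the image of $m_\lambda$ in $F^{\otimes d}/\mathfrak N$. There every $(\sigma(\alpha)+S)_{ij}$ becomes $\sigma(\alpha)_{ij}$, and since $\alpha,\sigma(\alpha)$ are central units we may factor the image of \eqref{eq:mlambda} over the blocks of $\lambda$ and pull out the unit $\prod_{(i,j)\in L_\lambda}\alpha_{ij}$, leaving a product over the parts $\lambda_k$ of $\lambda$ of quantum-$\lambda_k$-factorials in the central unit $\alpha^{-1}\sigma(\alpha)$. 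When $\alpha$ is symmetric — as it is in every row of \cref{tab:Rbeta} — this last factor is simply $|\fkS_\lambda| = \prod_k \lambda_k!$, which is a unit in $\bbk$ as soon as $\ona{char}\bbk > d$, because each $\lambda_k \le d$. Hence $m_\lambda$ is invertible in $F^{\otimes d}$, and so in $T = B^{\otimes d}$.
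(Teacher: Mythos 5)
For the identity \eqref{eq:mlambda} your argument is correct and is essentially the paper's: both evaluate in the faithful polynomial representation, identify $m_\lambda$ with $K_\lambda(1)$ (the paper writes $m_\lambda=M_\lambda S_\lambda(1)=K_\lambda(1)$ directly; your detour through multiplication by $\partial_\lambda\bigl(\prod_{(i,j)\in L_\lambda}P_{ij}\bigr)$ is the same computation via \eqref{eq:merge-Demazure}), invoke \eqref{eq:Klambda}, and then compute $H_w(1)$ by induction on a reduced word using $\rho_i(F^{\otimes d})=0$ and the base case $H_i(1)=\sigma_i(\alpha_i)+S_i$; the inversion-set bookkeeping you call delicate is exactly what the paper compresses into ``writing out a reduced expression'' and records in \cref{subs:multinom}.

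The ``in particular'' clause is where you diverge: the paper gives no argument, and yours proves a weaker statement than the one asserted. Your reduction modulo the ideal generated by the $S_{ij}$ is fine --- under \eqref{eq:C2} the coefficients $\Delta^{ij}$ of $\beta$, hence $S=\Delta^{10}-\Delta^{01}$, are central in $F\otimes F$, so that ideal is nilpotent and invertibility lifts along it --- but the final step, replacing $\sum_{w\in\fkS_\lambda}\prod_{(i,j)\in\mathrm{Inv}(w)}(\alpha^{-1}\sigma(\alpha))_{ij}$ by $\prod_k\lambda_k!$, uses $\sigma(\alpha)=\alpha$, which is not among the hypotheses of the corollary. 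This is a genuine gap relative to the statement as written, and in fact some such hypothesis appears to be necessary rather than a removable defect of your route: take $F=\bbk\times\bbk$, $\beta=0$ (so $S=0$ is nilpotent and \eqref{eq:A2} is vacuous), and $\alpha=e_1\otimes e_1+e_2\otimes e_2+e_1\otimes e_2-e_2\otimes e_1$, which is central and invertible, with $R=\alpha\sigma(\alpha)$ symmetric and central, and $P=\alpha(x_1-x_2)$ not a zero divisor, so \eqref{eq:C1}--\eqref{eq:C3} hold; yet $m_{(2)}=\alpha+\sigma(\alpha)$ annihilates $e_1\otimes e_2$ and is not invertible in any characteristic. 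So you should either add the hypothesis $\sigma(\alpha)=\alpha$ (or, more generally, that $\alpha^{-1}\sigma(\alpha)$ is congruent to $1$ modulo the ideal generated by $S$) --- which does hold in every example of \cref{tab:Rbeta}, as you note --- or explicitly flag that the corollary's last assertion requires it; as a proof of the literal statement, your argument does not close at that point.
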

\begin{proof}
  Recall that $m_\lambda = M_\lambda S_\lambda(1)$.
  Since the action of $S_\lambda$ on $V_d$ is by inclusion $\cR^{\fkS_\lambda}\hookrightarrow \cR$ (see \cref{app:polyrep}), 
  \begin{align*}
    m_\lambda = M_\lambda S_\lambda(1) = M_\lambda(1) = K_\lambda(1)
    = \sum\nolimits_{w\in\fkS_\lambda} \prod\nolimits_{(i,j)\in L_\lambda\setminus \mathrm{Inv}(w)}\alpha_{ij}H_w(1).
  \end{align*}
  It remains to show that $H_w(1)=\prod_{(i,j)\in L_\lambda\cap \mathrm{Inv}(w)}\overline{\alpha}_{ij}$.
  We know that 
  \[
    H_i(1) = S_{\lambda(s_i)}M_{\lambda(s_i)}(1) - \alpha_i = \alpha_i + \overline{\alpha}_{i} - \alpha_i = \overline{\alpha}_{i},
  \]
  and moreover, $H_it = s_i(t)H_i$ for any $t\in F^{\otimes d}$.
  Writing out a reduced expression for $H_w$, we arrive at the desired formula.
\end{proof}

Recall the notations of \cref{def:sublat}. 
\begin{defn}
  We call the following subalgebra $\bfS^{\mathrm{BLM}}\subset \bfS_d$ the \textit{coil Schur algebra}:
  \begin{equation}
  \bfS^{\mathrm{BLM}} := \bfS^{\cT}_d = \langle \bfH_d^\cT, S_\lambda, M_\lambda ~|~ \lambda \in \Lambda\rangle
  \end{equation}
\end{defn}
	\nmcl[SBLM]{$\bfS^{\mathrm{BLM}}$}{Coil Schur algebra.}%	

Consider the $(\bfS^{\cT}_d,\bfH_d^\cT)$-bimodule $\tC^\cT = \bigoplus_{\lambda} M_\lambda \bfH_d^\cT$. 
The following Schur duality is an immediate consequence of \cref{thm:SW-invertible}:
\begin{cor}\label{cor:DCP}
Assume that $m_{(i)}$ is invertible for all $i\leq d$. 
Then, the Schur duality between $B\wr \cH(d)$ and $\bfS^{\mathrm{BLM}} $ holds, i.e., $\End_{\bfS^\cT_d}(\tC^\cT) = B\wr \cH(d)$, $\End_{B\wr \cH(d)}(\tC^\cT)=\bfS^\cT_d$.
\end{cor}

\begin{expl}\label{expl:nilHecke}
  Let us come back to \cref{ex:Heckes}.
  \begin{itemize}[$\circ$]
    \item For nil-Hecke algebras, we have $\alpha = S = 0$.
    Therefore,  $m_\lambda = 0$ for all $\lambda$, and so \cref{cor:DCP} does not apply;
    \item For degenerate affine Hecke algebras, we have $\alpha = 1$, $S=0$, so that 
    $m_{\lambda} = \prod_i \lambda_i!$.
    In particular, $m_{(i)} = i!$ for all $i$,
     and hence \cref{cor:DCP} applies when $\ona{char}\bbk > d$;
    \item For affine Hecke algebras, $\alpha = 1$, $S = q-1$,
 therefore $m_\lambda = \prod_i [\lambda_i]_q!$ are $q$-factorials.
In particular, $m_{(i)} = \prod_{t=1}^i \frac{q^t - 1}{q -1}$ for all $i$,
  and hence \cref{cor:DCP} applies when $q$ is not a root of unity of order $\leq d$; 
  note that $m_\lambda = 1$ for $0$-Hecke algebra, and so \cref{cor:DCP} always applies;
    \item Finally, in the case of affine Frobenius Hecke algebras, \cref{cor:m-lam} applies when the quadratic relation splits (see  \cref{expl:alphas}).
    In this case, \cref{cor:DCP} applies when $\ona{char}\bbk > d$.
  \end{itemize}
\end{expl}

\subsection{$(\alpha,S)$-multinomial coefficients}\label{subs:multinom}
For any $\alpha \in Z(F\otimes F)$, let us define 
\begin{equation}\label{eq:al-w}
  \alpha_w \coloneqq \prod\nolimits_{(i,j) \in \textup{Inv}(w)} \alpha_{i,j} \in B^{\otimes d}, \qquad \alpha^*_{w} \coloneqq \alpha_{w^{-1}}.
\end{equation}
\begin{lem}
We have
\[
\alpha_w = \alpha_{i_1} \sigma_{i_1}(\alpha_{i_{2}})\dots (\sigma_{i_1} \dots\sigma_{i_{N-1}})(\alpha_{i_N}),
\quad
\alpha^*_w = \alpha_{i_N} \sigma_{i_N}(\alpha_{i_{N-1}})\dots (\sigma_{i_N} \dots\sigma_{i_2})(\alpha_{i_1}),
\]
where  $w = s_{i_1} \dots s_{i_N} \in \fkS_d$ is any reduced expression.
It is understood that $\alpha_e = \alpha^*_e = 1^{\otimes d}$.

In particular, if $\alpha = q(1\otimes 1)$ for some $q \in \bbk$, then
$\alpha_w = q^{\ell(w)}(1\otimes1)$.
\end{lem}
\begin{proof} 
The proof for $\alpha_w$ follows from an induction on the length of $w$.
The initial case is trivial. The inductive case follows from the fact that 
$\textup{Inv}(s_{i} w) = \textup{Inv}(s_{i} w) \sqcup \{ w^{-1}(i) < w^{-1}(i+1)\}$ if $s_{i}w > w$.  
\end{proof}

Recall the notations of \cref{sec:perm}. 
As a corollary, \eqref{eq:Klambda} and \eqref{eq:mlambda} become, respectively,
\begin{equation}\label{eq:Klml}
K_\lambda = \sum\nolimits_{w\in\fkS_\lambda} \alpha_{ww_\circ^\lambda} H_w
= \sum\nolimits_{w\in\fkS_\lambda} \alpha^*_{w_\circ^\lambda w^{-1}} H_w,
\quad
m_\lambda = \sum\nolimits_{w\in\fkS_\lambda} \alpha_{ww_\circ^\lambda} \alphab_w.
\end{equation}

In view of \cref{cor:m-lam,expl:nilHecke}, 
we may define the following notion.
\begin{defn}
  Let $F$ be a unital ring, $S\in Z(F\otimes F)$ a weak Frobenius element, choose $\alpha \in Z(F\otimes F)$ and write $\alphab \coloneqq \sigma(\alpha) + S$.
  For $\lambda \vDash d$, we define the \textit{$(\alpha,S)$-multinomial coefficient} by 
  \begin{equation}\label{multicoeff}
    \qnom{d}{\lambda}_{(\alpha,S)} \coloneqq \sum_{w\in \fkS^\lambda} 
\prod_{(i,j)\in \mathrm{Inv}(w(w_\circ^{\fkS^\lambda})^{-1})}\alpha_{ij}\prod_{(i,j)\in \mathrm{Inv}(w)}\alphab_{ij}.
  \end{equation}
\end{defn}

\begin{expl}
  Let $d = 3$. Then,
  \[
    \qnom{3}{1,2}_{(\alpha,S)} = \alpha_1\alpha_{13}+ \alpha_{2}\alphab_1 + \alphab_2\alphab_{13} 
    \neq  
    \qnom{3}{2,1}_{(\alpha,S)} = \alpha_{13}\alpha_{2}+ \alpha_{1}\alphab_2 + \alphab_{1}\alphab_{13}.
  \]
\end{expl}

Such coefficients appear when one describes the relations in laurel Schur algebras, see \cref{lm:SM-assoc}.
In particular, $m_{(d)} = \qnom{d}{1^d}_{(\alpha,S)}$, which specializes to the $q$-factorial $[d]_q! = \sum_{w \in \fkS_d} q^{\ell(w)}$ when $\alpha = 1\otimes 1$ and $S = (q-1)(1\otimes 1)$.
When $\lambda = (k,d-k)$, the set $\fkS^\lambda$ is identified with the set of size $k$ subsets of $\{1, \dots, d\}$. 
In the case $F = \bbk$, $\alpha = q(1\otimes1) \equiv q$, $S = t-q$, we recover the $(q,t)$-binomial coefficients
    \[
    \qnom{d}{k,d-k}_{(q,t-q)} = \sum_{ I \subseteq [d],\ {}^\#I=k} q^{c(I,<)} t^{c(I,>)},
    \quad
    c(I,\gtrless) \coloneqq {}^\#\{ (i,j) \in I \times ([d]\setminus I) ~|~ i \gtrless j \}.
  \]
Note that $\qnom{d}{k,d-k}_{(\alpha,S)} \neq \qnom{d}{d-k,k}_{(\alpha,S)}$ in general.
It is an interesting combinatorial question to see which classical formulae for $(q,t)$-binomial coefficients extend to this setting.
We would also like to know the geometric meaning of these elements when $F$ is the cohomology ring of a smooth manifold $X$, and $S \in H^{\dim X}(X\times X)$ is the class of the diagonal. 

\subsection{Bases of coil Schur algebras}\label{sec:bases-div}
The basis of $\bfS^\cT_d$ can be expressed in a straightforward way from the basis of $B\wr \cH(d)$.
Recall that by \cref{def:sublat}, we have the following direct sum decomposition:
\[
  \bfS^\cT_d \simeq \bigoplus\nolimits_{\lambda,\mu} \bfS^\cT_{\lambda,\mu},
  \quad
  \textup{where}
  \quad
  \bfS^\cT_{\lambda,\mu} \coloneqq M_\lambda \bfS^\cT_d S_\mu.
\]
Let us fix two compositions $\lambda,\mu\in \Lambda$ until the end of this section.
By \cref{prop:basis}, $B\wr \cH_d$ has a PBW basis in which a PBW monomial is of the form 
$b H_w$ where $b \in B^{\otimes d}$ and $w \in \fkS_d$, and hence any element in $B \wr \cH(d)$ can be spanned by PBW monomials in a different order as below: 
\begin{equation}\label{eq:Schur-prebasis}
H_{w_1} b H_{g}H_{w_2},
\quad
b\in B^{\otimes d}, \ w_1\in \fkS_\lambda, \ w_2\in \fkS_\mu, \ g \in {}^\lambda\fkS^\mu.
\end{equation}
Recall $\alphab := \sigma(\alpha) + S$.
We define $\alphab_w$ analogously to $\alpha_w$, replacing $\alpha_{i,j}$ with $\alphab_{i,j}$ in~\eqref{eq:al-w}.

\begin{lem}\label{lem:SM-eat-Hw}
  For any $w\in \fkS_\lambda$, we have
  $H_w S_\lambda = \alphab_w S_\lambda$, $M_\lambda H_w = M_\lambda \alphab_w$.
\end{lem}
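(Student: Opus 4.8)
The plan is to prove the first identity $H_w S_\lambda = \alphab_w S_\lambda$ by induction on $\ell(w)$ for $w\in\fkS_\lambda$, and then to obtain the second by the symmetric argument. Everything takes place inside the integral form $\bfS^T_d$, using the defining relations of the quantum wreath product together with the relations among splits and merges established in \cref{sec:DCP,sec:SD}.

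\emph{Base case $w=s_i$ with $s_i\in\fkS_\lambda$.} Since $\langle s_i\rangle=\fkS_{\lambda(s_i)}\subseteq\fkS_\lambda$, the defining formulas for splits show that the full split factors as $S_\lambda=S_{\lambda(s_i)}*S'$, where $S'\in\tA_{\lambda(s_i),\lambda}$ is the split attached to the $G$-equivariant projection $Y_{\lambda(s_i)}\twoheadrightarrow Y_\lambda$. Using $H_i=S_{\lambda(s_i)}*M_{\lambda(s_i)}-\xi_{1,\alpha_i}$ from \eqref{eq:Ksi} and $M_{\lambda(s_i)}*S_{\lambda(s_i)}=m_{\lambda(s_i)}$, we get
\[
  H_i* S_\lambda = S_{\lambda(s_i)}* m_{\lambda(s_i)}* S' - \xi_{1,\alpha_i}* S_\lambda.
\]
By \cref{cor:m-lam}, $m_{\lambda(s_i)}$ is multiplication by $\alpha_i+\alphab_i$, which is $s_i$-invariant, so \cref{lm:poly-past-sm} lets us move it past $S_{\lambda(s_i)}$, giving $S_{\lambda(s_i)}* m_{\lambda(s_i)}=\xi_{1,\alpha_i+\alphab_i}* S_{\lambda(s_i)}$. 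Hence $H_i* S_\lambda=\xi_{1,\alpha_i+\alphab_i}* S_\lambda-\xi_{1,\alpha_i}* S_\lambda=\xi_{1,\alphab_i}* S_\lambda=\alphab_i S_\lambda$, which is the claim since $\alphab_{s_i}=\alphab_{i,i+1}=\alphab_i$.

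\emph{Inductive step.} Let $\ell(w)\geq 1$. As $\fkS_\lambda$ is a standard parabolic subgroup, $w$ has a left descent $s_i$, and any such $s_i$ lies in $\fkS_\lambda$; write $w=s_iw''$ with $\ell(w'')=\ell(w)-1$, $w''\in\fkS_\lambda$, so that $H_w=H_iH_{w''}$. Since $\alphab_{w''}\in F^{\otimes d}$, the operator $\rho_i=\partial^\beta_i$ annihilates it by \cref{lem:partialbeta}(b), so the wreath relation gives $H_i\alphab_{w''}=\sigma_i(\alphab_{w''})H_i$. Combining this with the inductive hypothesis and the base case,
\[
  H_w S_\lambda = H_iH_{w''}S_\lambda = H_i\alphab_{w''}S_\lambda = \sigma_i(\alphab_{w''})H_iS_\lambda = \sigma_i(\alphab_{w''})\alphab_i\,S_\lambda .
\]
Finally, $\sigma_i$ acts on $F^{\otimes d}$ as the place permutation $s_i$, each $\alphab_{ab}$ is central in $F^{\otimes d}$, and place permutations send $\alphab_{ab}\mapsto\alphab_{s_i(a),s_i(b)}$; so the description of $\mathrm{Inv}(s_iw'')$ in terms of $\mathrm{Inv}(w'')$ (the lemma in \cref{subs:multinom}) yields $\alphab_{s_iw''}=\alphab_i\cdot s_i(\alphab_{w''})=\sigma_i(\alphab_{w''})\alphab_i$. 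This closes the induction.

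The identity $M_\lambda H_w=M_\lambda\alphab_w$ is proved in the same way: the base case $M_\lambda H_i=M_\lambda\alphab_i$ comes from the dual factorization $M_\lambda=M'* M_{\lambda(s_i)}$ together with \cref{cor:m-lam} and \cref{lm:poly-past-sm}, and the inductive step peels off a right descent of $w$ and transports $H_i$ through $M_\lambda H_{w''}$ via the wreath relation, using the right-handed form of the inversion-set recursion. The main obstacle is the base case — one must correctly factor the full split (resp.\ merge) through the elementary one while keeping track of the twist $e$ in \cref{lm:poly-past-sm} — and, beyond that, ensuring the ordering conventions are aligned so that the combinatorial recursion for $\alphab_w$ under left (resp.\ right) multiplication by $s_i$ exactly matches the output of the wreath relations.
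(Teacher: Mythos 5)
Your proof is correct in substance but follows a genuinely different route from the paper's. The paper notes that the two identities are symmetric under transposition in the convolution algebra, invokes faithfulness of the polynomial representation (\cref{prop:poly-faithful}), and verifies the first identity by a single chain of equalities on $\bfT_d$: since $S_\lambda$ acts as the inclusion $\cR^{\fkS_\lambda}\hookrightarrow\cR$, it peels the simple reflections of a reduced word of $w$ one at a time, using $H_i(1)=\alphab_i$ (from the proof of \cref{cor:m-lam}), the wreath relation, and the vanishing of $\rho_i$ on $F^{\otimes d}$ and on $s_i$-invariant elements (which uses centrality of $\beta$, i.e.\ \eqref{eq:C2}). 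You instead argue by operator identities in the convolution algebra itself: your base case rederives $H_iS_\lambda=\alphab_iS_\lambda$ from \eqref{eq:Ksi}, the factorization of $S_\lambda$ through $S_{\lambda(s_i)}$, the value $m_{\lambda(s_i)}=\alpha_i+\alphab_i$ and \cref{lm:poly-past-sm}, and your inductive step is the same peeling mechanism as the paper's. This buys independence from \cref{prop:poly-faithful}, at the price of quietly importing the partial split $S_{\lambda(s_i),\lambda}$ and the composition rule \cref{lm:SM-assoc} from \cref{sec:SSWD} (no circularity, these are elementary convolution computations, but note they live in $\bfS_d$ rather than in $\bfS^T_d$ as you assert; this is harmless since the identity can be checked in the ambient algebra).

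One step needs correcting. With the paper's convention $\alphab_w=\prod_{(i,j)\in\mathrm{Inv}(w)}\alphab_{ij}$, left multiplication satisfies $\alphab_{s_iw''}=\alphab_{w''}\,\alphab_{(w'')^{-1}(i),(w'')^{-1}(i+1)}$ (this is the recursion actually proved in \cref{subs:multinom}); the identity you invoke, $\alphab_{s_iw''}=\sigma_i(\alphab_{w''})\alphab_i$, is the recursion for \emph{right} multiplication, i.e.\ for $\alphab^*_w=\alphab_{w^{-1}}$. A test case: for $w=s_1s_2$, $w''=s_2$, one has $\alphab_{s_1s_2}=\alphab_{13}\alphab_{23}$, whereas $\sigma_1(\alphab_{s_2})\alphab_1=\alphab_{13}\alphab_{12}$. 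So your induction literally establishes $H_wS_\lambda=\alphab_{w^{-1}}S_\lambda$. This is not a defect specific to your write-up: the paper's own chain of equalities produces exactly the product $\alphab_{i_1}\sigma_{i_1}(\alphab_{i_2})\cdots=\alphab^*_w$ before labelling it as the product over $\{(i,j):w(i)>w(j)\}$, so the $w\leftrightarrow w^{-1}$ bookkeeping is off there too; since the $\alphab_{ij}$ are central and every later use of the lemma is insensitive to the distinction (only $w=s_i$, or the mere fact that the coefficient lies in $F^{\otimes d}$, is ever needed), the slip is harmless. Still, you should either cite the right-multiplication recursion or state the conclusion with $\alphab_{w^{-1}}$, and the analogous care is required on the merge side, exactly as your closing remark anticipates.
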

\begin{proof}
  Interpreting $\bfS^\cT_d$ as a subalgebra of the convolution algebra $\bfS_d$ from \cref{subs:Schur-def}, we see that the two equations are completely symmetric.
  It therefore suffices to check the first one. 
  By \cref{prop:poly-faithful} we can check it on the polynomial representation $\bfT_d$. Let $f\in R^{\fkS_\lambda}$, and let $w = s_{i_1} \dots s_{i_l}$ be a reduced expression. Then,
  \begin{align*}
    H_w S_\lambda(f)& 
    = H_w (f) = H_{s_{i_1}}\ldots H_{s_{i_l}} (f) 
    = H_{s_{i_1}}\ldots H_{s_{i_{l-1}}} (\alphab_{i_l}\sigma_{i_l}(f)+ \rho_{i_l}(f))
    \\
    & = H_{s_{i_1}}\ldots H_{s_{i_{l-1}}}(\alphab_{i_l} f) 
    = \ldots 
    = \prod\nolimits_{(i,j) \in L_\lambda, w(i)>w(j)}\alphab_{ij} f = \alphab_w S_\lambda,
  \end{align*} 
  where we have repeatedly used that $f$ is $\fkS_\lambda$-symmetric.
\end{proof}
Let $\nu = \delta^r(\lambda, g, \mu), \delta = \delta^c(\lambda, g,\mu)$ (see \cref{lem:doublecoset}).

\begin{prop}\label{prop:basis-div}
  Pick a $\bbk$-basis $\mathbb{B}$ of $M_\nu \cT S_\nu\subseteq \cT^{\fkS_\nu}$.
  Then the set below is a $\bbk$-basis of $\bfS^\cT_{\lambda,\mu}$:
  \begin{equation}\label{eq:spanning-set}
    \{ M_\lambda b H_{g}S_\mu ~|~  g \in {}^\lambda\fkS^\mu, b\in \mathbb{B}\}.
  \end{equation}
\end{prop}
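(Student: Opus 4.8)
The plan is to realize $\bfS^T_{\lambda,\mu}$ as the image of $B\wr\cH(d)$ under a single merge--split sandwich and then transport the PBW basis across it, controlling everything by a Bruhat filtration. First I would reduce the ambient object: by the block decomposition of \cref{def:sublat}, $M_\lambda$ (resp.\ $S_\mu$) annihilates every summand $\tA^T_{\rho\sigma}$ with $\rho\neq\omega$ (resp.\ $\sigma\neq\omega$), so $\bfS^T_{\lambda,\mu}=M_\lambda\bfH^T_dS_\mu$; combined with the isomorphism $\bfH^T_d\cong B\wr\cH(d)$ of \cref{prop:merge-is-in}, it suffices to analyze $z\mapsto M_\lambda zS_\mu$ on the PBW basis $\{bH_w\}$ of \cref{prop:basis}.

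For spanning, I would write $w=xgy$ with $x\in\fkS_\lambda$, $g\in{}^\lambda\fkS^\mu$, $y\in{}^{\delta^c}\fkS_\mu$ and lengths adding (\cref{lem:doublecoset}(c)), so $H_w=H_xH_gH_y$. Commuting $b$ leftward past $H_x$ by the wreath relations rewrites $bH_x$ as a sum of $H_{x'}\cdot(\text{polynomial})$ with $x'\le x$ in $\fkS_\lambda$; then \cref{lem:SM-eat-Hw} ($M_\lambda H_{x'}=M_\lambda\alphab_{x'}$, $H_yS_\mu=\alphab_yS_\mu$) absorbs the outer Hecke factors, leaving a $\bbk$-combination of $M_\lambda b'H_gS_\mu$, $b'\in B^{\otimes d}$. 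The terms created when a polynomial crosses $H_g$ carry $H_{g'}$ with $g'$ strictly below $g$ in the Bruhat order (hence with a strictly smaller minimal double-coset representative), and are cleared by downward induction. Finally, $\fkS_\nu=g\fkS_\mu g^{-1}\cap\fkS_\lambda$ (for $\nu=\delta^r(\lambda,g,\mu)$) is a standard parabolic of $\fkS_\lambda$, so the identity $w_\circ^\lambda=w_\circ^{\fkS_\lambda/\fkS_\nu}w_\circ^\nu$ together with the $\fkS_\nu$-invariance of $\prod_{(i,j)\in L_\lambda\setminus L_\nu}P_{ij}$ yields a factorization $M_\lambda=\partial'\circ\bigl(\textstyle\prod_{(i,j)\in L_\lambda\setminus L_\nu}P_{ij}\cdot\bigr)\circ M_\nu$ with $\partial'$ a partial Demazure operator; since $g^{-1}\fkS_\nu g\subseteq\fkS_\mu$ also forces $H_g(\cR^{\fkS_\mu})\subseteq\cR^{\fkS_\nu}$ modulo lower $g$, the element $M_\lambda b'H_gS_\mu$ depends on $b'$ only through $M_\nu(b')\in M_\nu TS_\nu$, up to terms with strictly smaller $g$. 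Induction then gives spanning by $\{M_\lambda bH_gS_\mu\}$ with $b\in\mathbb{B}$.

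For linear independence I would pass to the ambient twisted convolution algebra $\bfS_d=\cR_G(Y\times Y)$ over $\cR$, whose $(\lambda,\mu)$-block has the $\xi$-basis of \cref{lem:dumb-basis} indexed by ${}^\lambda\fkS^\mu$ and, for each $g$, a $\bbk$-basis of $\cR^{\fkS_\nu}$. A direct computation with \cref{eq:xi-prod} shows that $M_\lambda bH_gS_\mu$, viewed as a $G$-equivariant function on $Y_\lambda\times Y_\mu$, is supported on the $G$-orbits whose double coset has minimal representative $\le g$, and that its value at the base point $([1]_{\fkS_\lambda},[g]_{\fkS_\mu})$ equals $e_\lambda\,g(e_\mu)\sum_{z\in\fkS_\nu}z\bigl(bP_g/e(g)\bigr)$, which is a unit of $\cR$ times $M_\nu(b)$. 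Thus, as $b$ runs over $\mathbb{B}$, these leading values are $\bbk$-linearly independent in $\cR^{\fkS_\nu}$. Given a $\bbk$-linear relation among the proposed set, pick $g$ maximal in the Bruhat order with a nonzero coefficient; evaluating the relation at $([1]_{\fkS_\lambda},[g]_{\fkS_\mu})$ kills every other double coset by the support statement, and independence of the leading values forces the coefficients at $g$ to vanish, a contradiction.

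The crux is the polynomial bookkeeping: the factorization $M_\lambda=\partial'\circ(\cdots)\circ M_\nu$ together with the descent of $M_\lambda b'H_gS_\mu$ to $M_\nu TS_\nu$ modulo lower $g$ (Step 2), and the identification of the leading coefficient $\sum_{z\in\fkS_\nu}z(bP_g/e(g))$ with $M_\nu(b)$ up to a unit (Step 3) --- which rests on $P_g/e(g)$ agreeing, up to an $\fkS_\nu$-symmetric unit, with $\prod_{(i,j)\in L_\nu}P_{ij}/(x_i-x_j)$. Once these are settled, \cref{lem:doublecoset} and the Bruhat filtration do the rest formally.
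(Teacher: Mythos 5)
Your overall architecture is the same as the paper's: reduce via the PBW basis and \cref{lem:SM-eat-Hw} to elements $M_\lambda b' H_g S_\mu$ with $g\in{}^\lambda\fkS^\mu$, then descend the middle coefficient to $M_\nu T S_\nu$ modulo lower double cosets, then prove independence by triangularity against the $\xi$-basis of \cref{lem:dumb-basis}. The genuine gap is in the descent step. Knowing that (mod lower terms) $M_\lambda b' H_g S_\mu$ depends on $b'$ only through $M_\nu(b')$ does \emph{not} let you replace $b'$ by an element of $M_\nu T S_\nu$: apply your own principle to $b\in M_\nu T S_\nu=M_\nu(T)$ and the class depends on $M_\nu(b)=m_\nu b$, because $M_\nu$ is $\cR^{\fkS_\nu}$-linear with $M_\nu(1)=m_\nu$ (\cref{cor:m-lam}). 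So, modulo lower terms, the span of your proposed set only reaches coefficients in $m_\nu M_\nu(T)$, and matching a general $M_\nu(b')$ forces you to divide by $m_\nu$. This division is precisely the computational heart of the paper's proof: it inserts $K_\nu m_\nu^{-1}$ on the left and $m_\delta^{-1}K_\delta$ on the right, using $M_\lambda K_\nu=M_\lambda m_\nu$, $K_\delta S_\mu=m_\delta S_\mu$ (\cref{prop:merge-is-in}, \cref{lem:SM-eat-Hw}, \cref{cor:m-lam}) and the braid-relation conjugation $H_w K_\delta = K_\nu H_w$. Your write-up never mentions $m_\nu$ at all, so ``induction then gives spanning with $b\in\mathbb{B}$'' is a non sequitur as written; moreover the invertibility of $m_\nu$ (in force in this subsection, cf.\ \cref{cor:DCP} and the opening of \cref{sec:SSWD}) is genuinely needed --- for the nil-Hecke algebra $m_\nu=0$, every proposed generator with $b\in M_\nu TS_\nu$ degenerates, and no Bruhat induction can recover the missing elements.

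The same omission resurfaces in your independence argument: the leading value you compute at $([1],[g])$ is, for $b\in T^{\fkS_\nu}$, equal to $b\,m_\nu$ times an invertible element of $\cR$, not ``a unit of $\cR$ times $M_\nu(b)$'' with the whole factor invertible --- $m_\nu$ lies in $F^{\otimes d}$ and need not even be a non-zero-divisor, so independence of the leading values again rests on the invertibility of $m_\nu$ and must invoke it (the paper's triangularity is arranged so that only the honest unit $e_\mu\beta_w^{-1}$ appears). Two smaller inaccuracies in the descent step: the leading dependence is through $M_\nu(b'\alphab_g)$ rather than $M_\nu(b')$ (the factor $\alphab_g$ appears when $H_g$ is evaluated against $\fkS_\mu$-invariants), and the factorization $M_\lambda=\partial'\circ\bigl(\prod_{(i,j)\in L_\lambda\setminus L_\nu}P_{ij}\cdot\bigr)\circ M_\nu$ should be stated and checked on the polynomial representation and transported back via \cref{prop:poly-faithful}. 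Once you grant invertibility of $m_\nu$, make the relation $M_\nu(u)=m_\nu u$ for $u\in\cR^{\fkS_\nu}$ explicit, and take $b=m_\nu^{-1}M_\nu(b')\in M_\nu(T)$, your route (factorization on the polynomial representation plus faithfulness) becomes a viable variant of the paper's purely algebraic $K_\nu$-sandwich; but as it stands the key mechanism is missing.
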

\begin{proof}
  \cref{lem:SM-eat-Hw} together with~\eqref{eq:Schur-prebasis} imply that $\bfS^\cT_{\lambda,\mu}$ has the following spanning set:
  \begin{equation*}
    \{ M_\lambda(\textstyle{\bigotimes_{j=1}^d} b_{i_j}) H_{w}S_\mu ~|~ i_j \in I, w \in {}^\lambda \fkS^\mu\}.
  \end{equation*}
  Furthermore, combining \cref{prop:merge-is-in,lem:SM-eat-Hw,cor:m-lam} we obtain
  \begin{equation}\label{SM-eat-K}
    M_\lambda K_\nu = M_\lambda m_\nu,\qquad K_\nu S_\lambda = m_\nu S_\lambda.
  \end{equation}
  Let $f\in B^{\otimes d}$.
  Using previously established properties of splits and merges, we obtain
  \begin{align*}
    M_\lambda f H_{w}S_\mu &= M_\lambda K_{\nu} m_\nu^{-1} f H_{w} m_{\delta}^{-1} K_{\delta}S_\mu &\text{\eqref{SM-eat-K}}\\
    &\stackrel{.}{=} M_\lambda K_{\nu} m_\nu^{-2} f H_{w} K_{\delta}S_\mu &\text{(wreath relations)}\\
    & = M_\lambda K_{\nu} m_\nu^{-2} f K_{\nu}H_{w} S_\mu &\text{(braid relations)}\\
    & = M_\lambda S_{\nu} (M_{\nu} m_\nu^{-2} f S_{\nu}) M_{\nu} H_{w} S_\mu &\text{\cref{lm:SM-K-m}}\\
    & = M_\lambda K_{\nu}m_\nu^{-1} (M_{\nu} m_\nu^{-1} f S_{\nu}) H_{w} S_\mu &\text{\cref{lm:poly-past-sm} for $(M_{\nu} m_\nu^{-2} f S_{\nu})$, $m_\nu^{-1}$}\\
    & = M_\lambda (M_{\nu} m_\nu^{-1} f S_{\nu}) H_{w} S_\mu, &\text{\eqref{SM-eat-K}}
  \end{align*}
  where the dot over an equality $\stackrel{.}{=}$ means that it holds up to lower terms in $w$.
  We deduce that the set~\eqref{eq:spanning-set} spans $\bfS^\cT_{\lambda,\mu}$ over $\bbk$.

  In order to check linear independence, recall the basis of $\bfS_d$ from \cref{lem:dumb-basis}:
  \[
    \xi_{w,f}(y,y') = \sum_{\sigma\in\fkS_d/\fkS_\nu} \delta_{[\sigma],y}\delta_{[\sigma],y'} \sigma(e_\lambda f), 
    \quad w\in {}^\lambda \fkS^\mu, f\in B_\cR^{\fkS_\nu}. 
  \]
  A lengthy computation completely analogous
  to the one in the proof of~\cite[Theorem 4.10]{maksimauKLRSchurAlgebras2022a} shows that in terms of this basis, $M_\lambda f H_{w}S_\mu$ has the highest term $\xi_{w,f e_\mu \beta^{-1}_w}$, where 
  \[
    \beta_w = \prod\nolimits_{(i,j)\in N_\lambda\cup w(N_\mu)}(x_i-x_j)\prod\nolimits_{(i,j)\in P_\lambda\cap w(P_\mu)}P_{ij}.
  \]
  Since $e_\mu \beta^{-1}_w$ is invertible, we see that the set~\eqref{eq:spanning-set} is related to a subset of~\eqref{eq:dumb-basis} by an upper-triangular (in $w$) change of basis.
  This yields that its elements are linearly independent, and so we may conclude.
\end{proof}

%=================================
\medskip
\section{Laurel Schur algebras}\label{sec:SSWD}
Without the invertibility of $m_\lambda$, the subalgebra $\bfS^\cT_{\lambda,\lambda}$ does not contain the identity map $M_\lambda \bfH_d^\cT \to M_\lambda \bfH_d^\cT$.
Indeed, for $\lambda=(d)$ we have ${}^\lambda\fkS^\mu = \{1\}$, $\fkS_\nu = \fkS_{\delta} = \fkS_d$, and so all elements of $\bfS^\cT_{(d),(d)}$ are of the form
\[
  M_{(d)} f S_{(d)} = f M_{(d)}S_{(d)} = f m_{(d)}, \qquad f\in \cT^{\fkS_d}.
\]
This illustrates the failure of Schur duality as stated in \cref{cor:DCP}.
In order to get the correct statement when $m_\lambda$ are not invertible, we exploit the additional structure afforded by subdivision of compositions.
%=================================
\subsection{Divided powers basis}
Let us begin by proving some properties of PQWP algebra $\bfH_d^\cT = B\wr \cH(d)$. 
For any $\lambda\in \Lambda$ and a refinement $\nu\vDash\lambda$, 
write $w'_\circ \coloneqq w_\circ^{(^\nu\fkS_\lambda)}$ and $w_\circ'' \coloneqq w_\circ^{(\fkS^\nu_\lambda)}$ for short; see \cref{sec:perm} for the notation.
Define
\begin{equation}\label{def:KK}
K_\lambda^{\nu} \coloneqq \sum\nolimits_{w\in {}^\nu\fkS_\lambda}
H_w \alpha_{w^{-1} w'_\circ},
\quad
\widetilde{K}_\lambda^{\nu} \coloneqq \sum\nolimits_{w\in \fkS^\nu_\lambda}
\alpha^*_{w''_\circ w^{-1}}
H_w.
\end{equation}
\begin{expl}
Let $\lambda = (3)$. Then,
\[
\begin{split}
K_{(3)} &= 
H_1H_2H_1 
+ \alpha_1H_2H_1
+ \alpha_2H_1H_2  
+ \alpha_1 \alpha_{13} H_2 
+ \alpha_2 \alpha_{13}H_1 
+ \alpha_1\alpha_{13}\alpha_2,
\\
&=(H_1H_2+\alpha_1 H_2+ \alpha_{13}\alpha_2)(H_1 + \alpha_1)
= (H_1+ \alpha_1) (H_2H_1  + H_2 \alpha_1 +  \alpha_{13}\alpha_2),
\end{split}
\] 
Indeed,
\[
K_{(3)}^{(2,1)} = H_2H_1  + H_2 \alpha_{s_1} +  \alpha_{s_2s_1},
\quad
\widetilde{K}_{(3)}^{(2,1)} = H_1H_2+\alpha^*_{s_1}H_2+ \alpha^*_{s_1s_2},
\]
and hence
$y_{(3)} H_1= y_{(3)}\alphab_1$.
\end{expl}

\begin{lem}\label{decomposeKlamb}
\begin{enumerate}
  \item Let $A \equiv (\lambda, g, \mu)$, $\nu = \delta^r(\lambda, g, \mu)$, and $\delta = \delta^c(\lambda, g,\mu)$. Then
  \[
  K_\mu = K_\delta K_\mu^\delta,
  \quad
  K_\lambda = \widetilde{K}_\lambda^\nu K_\nu.
  \]
  \item Let $F\in \bfH_d^\cT$.
  Then, $F \in \bfH_d^\cT K_\lambda$ if and only if $FH_i = F\alphab_i$ for all $i$ with $s_i\in\fkS_\lambda$.
\end{enumerate}
\end{lem}
\begin{proof}
  For (1), let us consider $g=s_i \in \fkS_\lambda$ a transposition.
  Let $I$ be the composition such that $\fkS_{I} = \langle s_i \rangle$. 
  Denote by $\fkS_\lambda^I$ and ${}^I\fkS_\lambda$ the set of shortest left and right coset representatives of $\fkS_I \subseteq \fkS_\lambda$ with the longest elements $w_\circ^l$ and $w_\circ^r$, respectively.
  Then, 
  \begin{equation}\label{eq:KlHi}
  K_\lambda 
  = \sum\nolimits_{w\in \fkS_\lambda^I} \alpha_{w_\circ^{l} w}H_w  (H_i +\alpha_i) 
  = \sum\nolimits_{w\in {}^I\fkS_\lambda} (H_i +\alpha_i)H_w \alpha^*_{ww_\circ^{r}}.
  \end{equation}
The case $\ell(g) >1$ follows from an analogous argument.

  For (2), fix an $i$ such that $s_i \in \fkS_\lambda$, and let $I$ be as above.
  Thanks to \eqref{eq:C1}, $0 = (H_i + \alpha_i)(H_i- \alphab_i)$, and hence 
  \begin{equation}\label{eq:eigenv}
  (H_i+ \alpha_i)H_i =  (H_i +\alpha_i)\alphab_i.
  \end{equation}
  
  The necessity follows from \eqref{eq:KlHi}, since
  $K_\lambda H_i = K_\lambda \alphab_i$.
  For sufficiency, use \cref{prop:basis} to write $F = F_1 + F_2 H_i$, where $F_1, F_2$ are linear combinations of elements $b H_w$, $b\in B$, $w\in \fkS^I$.
  Then
  \begin{align*}
    0 = (F_1 + F_2 H_i)(\alphab_i - H_i) = 
    F_1\alphab_i -F_2R_i + (F_2\alpha_i - F_1)H_i,
  \end{align*}
  and so $F_1 = F_2\alpha_i$. Thus, $F = F_2(\alpha_i+H_i)$.
  Doing the same computation for all $i$ with $s_i\in\fkS_\lambda$, we conclude that $F = F'K_\lambda$, where $F'$ is a linear combination of elements $b H_w$, $b\in B$, $w\in \fkS^\lambda$.
\end{proof}

In the notations of \cref{decomposeKlamb}, let $b\in \cT^{\fkS_{\nu(g)}}$, $b'\in \cT^{\fkS_{\delta(g)}}$.
We define
\[
  K_{A,b} \coloneqq K_\lambda b H_g K_\mu^\delta,\qquad \widetilde{K}_{A,b'} \coloneqq \widetilde{K}_\lambda^\nu H_g b' K_\mu.
\]
Note that $K_{A,1} = \widetilde{K}_{A,1}$ by \cref{decomposeKlamb}(1).
\begin{prop}\label{prop:Mackey}
Let $\lambda,\mu\in \Lambda$.
For each $g\in {}^\lambda \fkS^\mu$, let 
$\nu(g) \coloneqq \delta^r(\lambda, g, \mu)$,  $\delta(g) \coloneqq \delta^c(\lambda, g,\mu)$, 
and pick $\bbk$-bases $\overline{\mathbb{B}}_g$ of $\cT^{\fkS_{\nu(g)}}$ and $\overline{\mathbb{B}}'_g$ of $\cT^{\fkS_{\delta(g)}}$, respectively.
Then:
  \begin{enumerate}[(a)]
    \item  The set $\{  K_{A,b} ~|~ g\in {}^\lambda \fkS^\mu, b\in \overline{\mathbb{B}}_g  \}$ is a $\bbk$-basis of the subspace $K_\lambda \bfH_d^\cT \cap \bfH_d^\cT K_\mu$,
    and so is the set $\{  \widetilde{K}_{A,b'} ~|~ g\in {}^\lambda \fkS^\mu, b'\in \overline{\mathbb{B}}'_g  \}$.
    \item Let $\alpha_A\coloneqq \prod_{(i,j)\in (N_\lambda \cap g(N_\mu))\setminus \mathrm{Inv}(g)}\alpha_{ij}$. For any $\lambda,\mu\in \Lambda$, we have
    \begin{equation}\label{eq:thickest-bialg}
      K_{(d)} = \sum\nolimits_{g\in {}^\lambda\fkS^\mu} \alpha_A K_\lambda^{\nu(g)} H_g K_\delta K_\mu^{\delta(g)}.
    \end{equation}
  \end{enumerate}
\end{prop}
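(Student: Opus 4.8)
\emph{Part (a).} The two spanning sets play left--right mirror roles ($\widetilde K^\nu_\lambda$ is the transpose of $K^\nu_\lambda$, cf.~\eqref{def:KK}), so it suffices to prove that $\{K_{A,b}\}$ is a $\bbk$-basis of $K_\lambda\bfH_d^T\cap\bfH_d^T K_\mu$; the second family is handled by the mirror argument, with $\lambda,\mu,g$ replaced by $\mu,\lambda,g^{-1}$. That $K_{A,b}=K_\lambda\cdot(bH_gK_\mu^{\delta(g)})$ lies in $K_\lambda\bfH_d^T$ is clear. For the inclusion $K_{A,b}\in\bfH_d^T K_\mu$, by \cref{decomposeKlamb} it is enough to show $K_\lambda bH_g\in\bfH_d^T K_{\delta(g)}$ (then $K_\lambda bH_gK_\mu^{\delta(g)}\in\bfH_d^T K_{\delta(g)}K_\mu^{\delta(g)}=\bfH_d^T K_\mu$), and by \cref{lm:probing-with-H} this reduces to checking $(K_\lambda bH_g)H_j=(K_\lambda bH_g)\alphab_j$ for all $j$ with $s_j\in\fkS_{\delta(g)}$. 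For such $j$ one has $gs_jg^{-1}=s_k$ with $s_k\in\fkS_{\nu(g)}$, whence $H_gH_j=H_kH_g$ and $H_g\alphab_j=\alphab_kH_g$; since $b$ is $\fkS_{\nu(g)}$-invariant and $\beta$, $\alphab$ are central over $F$ by \eqref{eq:C2}, we get $bH_k=H_kb$ and $\alphab_kb=b\alphab_k$, while $K_\lambda H_k=K_\lambda\alphab_k$ by \eqref{eq:KlHi}. Chaining these gives $(K_\lambda bH_g)H_j=K_\lambda bH_kH_g=K_\lambda\alphab_kbH_g=K_\lambda b\alphab_kH_g=(K_\lambda bH_g)\alphab_j$.

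Linear independence and spanning follow the template of \cref{prop:basis-div}. For independence, realize $\bfH_d^T\subseteq\bfH_d=\cR_G(\fkS_d\times\fkS_d)$; a leading-term computation in the basis \eqref{eq:dumb-basis} (as in the proof of \cref{prop:basis-div}) shows that $K_{A,b}$ has a nonzero highest term $\xi_{w_\circ^A,\ast}$ indexed by the longest element of $\fkS_\lambda g\fkS_\mu$, with invertible and pairwise distinct highest coefficients as $(A,b)$ ranges, so the $K_{A,b}$ are $\bbk$-independent. For spanning, let $F\in K_\lambda\bfH_d^T\cap\bfH_d^T K_\mu$; the left-handed form of \cref{lm:probing-with-H} writes $F=K_\lambda F''$ with $F''$ a combination of $H_vb$, $v\in{}^\lambda\fkS$, $b\in B^{\otimes d}$. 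Splitting ${}^\lambda\fkS=\bigsqcup_{g}\{gy:y\in{}^{\delta(g)}\fkS_\mu\}$ via \cref{lem:doublecoset} and using $FH_j=F\alphab_j$ ($s_j\in\fkS_\mu$) together with \eqref{eq:KlHi} to fold the $\fkS_\mu$-string in each double-coset block into $K_\mu^{\delta(g)}$ (modulo strictly lower double cosets), a descending induction over $\fkS_\lambda\backslash\fkS_d/\fkS_\mu$ rewrites $F$ in terms of the $K_{A,b}$.

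\emph{Part (b).} We match the two sides over double cosets. Applying \cref{decomposeKlamb} to the triple $(\lambda,e,(d))$ gives $K_{(d)}=K_\lambda K_{(d)}^\lambda$; since $\mathrm{Inv}(w_\circ w)=\{(i,j):i<j\}\setminus\mathrm{Inv}(w)$, formula \eqref{eq:Klml} yields $K_{(d)}=\sum_{g\in{}^\lambda\fkS^\mu}\bigl(\sum_{w\in\fkS_\lambda g\fkS_\mu}\alpha_{w_\circ w}H_w\bigr)$ with each inner sum supported on one double coset. On the other hand $K_\delta K_\mu^{\delta(g)}=K_\mu$ by \cref{decomposeKlamb}, so the claimed right-hand side equals $\sum_g\alpha_AK_\lambda^{\nu(g)}H_gK_\mu$, whose $g$-th term is also supported on $\fkS_\lambda g\fkS_\mu$. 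Thus it suffices to establish, for each $g$,
\[
  \sum\nolimits_{w\in\fkS_\lambda g\fkS_\mu}\alpha_{w_\circ w}\,H_w \;=\; \alpha_A\,K_\lambda^{\nu(g)}\,H_g\,K_\mu .
\]
Writing $w=xgy$ with $x\in\fkS_\lambda$, $y\in{}^{\delta(g)}\fkS_\mu$ and additive lengths (\cref{lem:doublecoset}), factoring $x$ through $\fkS_{\nu(g)}\backslash\fkS_\lambda$ and $y$ through $\fkS_{\delta(g)}\backslash\fkS_\mu$, and transporting via the length-preserving isomorphism $\fkS_{\nu(g)}\xrightarrow{\sim}\fkS_{\delta(g)}$, $z\mapsto g^{-1}zg$, both sides reduce to a common sum; bookkeeping the exponents $\alpha_{ij}$, the inversions that are neither internal to the two parabolics nor in $\mathrm{Inv}(g)$ are exactly those in $(N_\lambda\cap g(N_\mu))\setminus\mathrm{Inv}(g)$, which produce the prefactor $\alpha_A$. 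Alternatively, since both sides lie in $K_\lambda\bfH_d^T\cap\bfH_d^T K_\mu$, one may verify the identity on the faithful polynomial representation $\bfT_d$ of \cref{prop:poly-faithful}, where all factors act as explicit twisted Demazure operators.

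\emph{Main obstacle.} The bulk of the effort is combinatorial: keeping the left- and right-parabolic coset decompositions of $\fkS_\lambda g\fkS_\mu$ aligned, tracking how the products $\prod\alpha_{ij}$ over inversion sets transform under $g$ and under the conjugation $\fkS_{\nu(g)}\cong\fkS_{\delta(g)}$, and controlling the lower-double-coset error terms in the spanning step. The delicate point in the membership step of (a) --- that inserting a merely $\fkS_{\nu(g)}$-invariant $b$ into $K_\lambda H_gK_\mu^{\delta(g)}$ does not destroy membership in $\bfH_d^T K_\mu$ --- rests on \eqref{eq:C2}, which forces $\beta$ (hence every $s_k$-invariant $b\in B^{\otimes d}$) to commute with $H_k$, and makes $\alphab$ central.
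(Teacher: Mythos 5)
Your membership computation in (a) is correct and in fact more detailed than the paper's one-line appeal to \cref{decomposeKlamb}: the conjugation fact $gs_jg^{-1}=s_k$ for $s_j\in\fkS_{\delta(g)}$, $s_k\in\fkS_{\nu(g)}$, the commutation of a $\fkS_{\nu(g)}$-invariant $b$ past $H_k$ under \eqref{eq:C2}, and \cref{lm:probing-with-H} are exactly the right ingredients; the reduction of (b) to one identity per double coset by support, and the leading-term independence argument, are also sensible. However, your spanning argument in (a) misses the step that carries all the weight in the paper's proof. After writing $F=\sum_{w\in{}^\lambda\fkS}K_\lambda b_wH_w$ and locating a minimal double-coset representative $g$ with $b_g\neq 0$, one still has to show that $b_g$ is $\fkS_{\nu(g)}$-invariant; otherwise ``folding the $\fkS_\mu$-string into $K_\mu^{\delta(g)}$'' does not land in the span of the $K_{A,b}$ with $b\in T^{\fkS_{\nu(g)}}$ (indeed $K_\lambda bH_gK_\mu^{\delta(g)}$ need not even lie in $\bfH_d^TK_\mu$ for non-invariant $b$, as your own membership argument makes clear). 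In the paper this invariance is extracted from $FH_i=F\alphab_i$ for $s_i\in\fkS_{\delta(g)}$, which yields $\sigma_j(\alpha_j(x_j-x_{j+1})+\beta_j)(b_g-\sigma_j(b_g))=0$ with $j=g(i)$, and then \eqref{eq:C3} (the non-zero-divisor hypothesis) forces $b_g=\sigma_j(b_g)$. Your proposal never invokes \eqref{eq:C3} anywhere in (a), and your ``main obstacle'' paragraph points at the membership step instead; this is a genuine gap, not a presentational one.

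In (b), the proposed verification does not work as stated. After using $H_gK_{\delta(g)}=K_{\nu(g)}H_g$, the $g$-th term of the right-hand side contains products $H_xH_u$ with $x\in{}^{\nu(g)}\fkS_\lambda$ and $u\in\fkS_{\nu(g)}$ multiplying on the right; lengths are not additive there (shortest representatives of $\fkS_{\nu(g)}\backslash\fkS_\lambda$ are only reduced against $\fkS_{\nu(g)}$ acting on the left), so the quadratic and wreath relations create correction terms carrying $S$- and $R$-coefficients. Consequently the two sides are not a ``common sum'' of $\alpha$-weighted PBW monomials obtained by re-indexing $w=xgy$ and transporting through $\fkS_{\nu(g)}\simeq\fkS_{\delta(g)}$; already for $\lambda=(d)$ and $\mu\neq(d)$ with $g=e$, the identity is a factorization statement for $K_{(d)}$ of the same nature as \cref{decomposeKlamb} but with the $\alpha$-coefficients on the other side, whose verification genuinely uses the quadratic relation and \eqref{eq:C1}, not inversion bookkeeping alone. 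The paper instead compares only the coefficients at the minimal representatives $H_g$, concluding via $L_{(d)}\setminus\mathrm{Inv}(g)=(L_\lambda\cup g(L_\mu))\sqcup\bigl((N_\lambda\cap g(N_\mu))\setminus\mathrm{Inv}(g)\bigr)$; your alternative of checking the identity in the faithful polynomial representation of \cref{prop:poly-faithful} is viable in principle, but it is only mentioned, and carrying it out is a computation on the scale of \cref{prop:merge-is-in}, not a one-liner. The mirror treatment of the second family in (a) is fine at the paper's own level of detail.
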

\begin{proof}
For (a), it follows from \cref{decomposeKlamb}(1) that $K_{A,b} \in K_\lambda \bfH_d^\cT \cap \bfH_d^\cT K_\mu =: \bbH$.
Let $\bbH' \subseteq \bbH$ be the subspace spanned by elements of the form $K_{A,b}$. We want to show that any $h \in \bbH$ lies in $\bbH'$.
Since $h\in K_\lambda \bfH_d^\cT$, we can write
\[
h = \sum\nolimits_{w \in {}^\lambda\fkS}  K_\lambda b_w H_w
\quad
\textup{for some}
\quad
b_w \in B^{\otimes d}.
\]
Pick any $x \in {}^\lambda\fkS$ with $b_x \neq 0$.
Suppose that $x \not\in {}^\lambda\fkS^\mu$, then one can pick an $s =s_i \in \fkS_\mu$ with $xs < x$ and $xs \in {}^\lambda\fkS$.
Since $h \in \bfH_d^\cT K_\mu$, by \cref{decomposeKlamb}(2) we have $h H_i = h \overline{\alpha}_i$, and hence by comparing the coefficients of $H_x$ on both sides of $\sum_w K_\lambda b_w  H_w \overline{\alpha}_i = \sum_w K_\lambda b_w H_w H_i$ yields that $b_{xs} \neq 0$.
Therefore, one can repeat this procedure to find a representative $g \in {}^\lambda\fkS^\mu$ with $b_g \neq 0$.

Let $s = s_i\in \fkS_{\delta(g)}$.
On one hand we have $K_\lambda b_g H_g H_i = K_\lambda b_g H_g \overline{\alpha}_i$ by \cref{decomposeKlamb}(2), and on the other hand,
\begin{align*}
  K_\lambda b_g H_g H_i & = K_\lambda b_g H_{j} H_g = K_\lambda (H_j \sigma_j(b_g) + \rho_j(b_g)) H_g\\
  & = K_\lambda\sigma_j(b_g)H_g \overline{\alpha}_i +  K_\lambda \rho_j(b_g) H_g,
\end{align*}
where $j = g(i)$.
This implies that $\overline{\alpha}_j\sigma_j(b_g) + \rho_j(b_g) = \overline{\alpha}_j b_g$, and so 
\[
  0 = (\beta_j - \overline{\alpha}_j(x_j-x_{j+1}))(b_g-\sigma_j(b_g)) = \sigma_j(\alpha_j(x_j-x_{j+1})+\beta_j)(b_g-\sigma_j(b_g)).
\]
Since $\alpha_j(x_j-x_{j+1})+\beta_j$ is not a zero divisor by~\eqref{eq:C3}, it follows that $b_g = \sigma_j(b_g)$.
We proved that $b_g\in T^{\fkS_{\delta(g)}}$, and so $h - K_{A,b_g} \in \bbH'$.
Proceeding by recurrence, we obtain that $h$ is a sum of terms of the form $K_{A,b_g}$, $b_g\in T^{\fkS_{\delta(g)}}$, and so $h\in \bbH'$.

The linear independence follows from $\bbk$-linear independence of the set $\{K_A~|~g\in {}^\lambda \fkS^\mu\}$.
Recall the longest elements $w^\lambda_\circ = w_\circ$, $w''_\circ = w^{{}^\delta\fkS_\mu}_\circ$, $w^A_\circ = w^{\fkS_\lambda g \fkS_\mu}_\circ$.
Then the elements $K_A$ are linearly independent, because their highest terms $H_{w^\lambda_\circ}H_gH_{w''_\circ} = H_{w^A_\circ}$ are.

Finally, the proof of (b) is a direct computation.
Namely, recall the definitions~(\ref{eq:Klambda}, \ref{def:KK}) of all the terms. 
Since all the coefficients are products of $\alpha_{ij}$'s over non-inversions, it suffices to check the equality of coefficients at each $H_g$, $g\in {}^\lambda\fkS^\mu$.
There, we conclude by observing that 
\begin{align*}
  L_{(d)}\setminus \mathrm{Inv}(g) = (L_\lambda \cup g(L_\mu)) \sqcup (N_\lambda \cap g(N_\mu))\setminus \mathrm{Inv}(g),
\end{align*}
and that the coefficient of $H_g$ on the right-hand side of~\eqref{eq:thickest-bialg} is precisely $\prod_{(i,j)\in L_\lambda \cup g(L_\mu)}\alpha_{ij}$.
\end{proof}

\subsection{Laurel Schur algebras}
\begin{defn}
  Let $\lambda, \nu \in\Lambda$ with $\nu\vDash\lambda$.
  Define \textit{partial splits and merges} $  S_{\nu\lambda}\in \cR_G(Y_\nu\times Y_\lambda)$ and $ M_{\lambda\nu}\in A_{\lambda\nu}$, respectively, by
  \[
   S_{\nu\lambda}(x,y) \coloneqq \delta_{p_{\nu\lambda}(x),y}e(y);
  \quad
    M_{\lambda\nu}(y,x) \coloneqq \delta_{y,p_{\nu\lambda}(x)}e(y),
  \]
  where $p_{\nu\lambda}: \fkS_d/\fkS_\nu\to \fkS_d/\fkS_\lambda$ is the natural projection.
	\nmcl[Snulambda]{$S_{\nu\lambda}$}{The partial split with respect to $\nu \vDash \lambda$.}%	
	\nmcl[Mlambdanu]{$M_{\lambda\nu}$}{The partial merge with respect to $\nu \vDash \lambda$.}%		  
  Define a subalgebra
  \begin{equation}
  \overline{\bfS}^{\mathrm{BLM}}=
  \overline{\bfS}^\cT_d \coloneqq\langle S_{\nu\lambda},  M_{\lambda\nu}, t_\lambda ~|~\nu \vDash \lambda\in\Lambda, t\in \cT^{\fkS_\lambda} = \cT_{\fkS_d}(Y_\lambda) \rangle \subseteq \bfS_d,
  \end{equation}
  which we call the {\em laurel Schur algebra}.
\end{defn}
	\nmcl[SBLMb]{$\overline{\bfS}^{\mathrm{BLM}}$}{Laurel Schur algebra.}%	
\begin{lem}\label{lm:SM-assoc}
  Let $\nu\vDash\mu\vDash\lambda$.
  We have $S_{\nu\mu}S_{\mu\lambda} = S_{\nu\lambda}$, $M_{\lambda\mu}M_{\mu\nu} = M_{\lambda\nu}$; in particular, $S_{\lambda\lambda} = M_{\lambda\lambda}$ is an idempotent in $\overline{\bfS}^\cT_d$.
  Furthermore, $M_{(d),\lambda} S_{\lambda,(d)} = \qnom{d}{\lambda}_{(\alpha,S)}$.
\end{lem}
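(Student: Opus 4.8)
The plan is to verify both identities by direct computation in the twisted convolution algebra $\bfS_d=\cR_G(Y\times Y)$, using the defining formula~\eqref{eq:twisted-conv}. The only structural input needed is that the coset projections are functorial: since $\nu\vDash\mu\vDash\lambda$ means $\fkS_\nu\subseteq\fkS_\mu\subseteq\fkS_\lambda$, the natural maps compose, $p_{\mu\lambda}\circ p_{\nu\mu}=p_{\nu\lambda}\colon \fkS_d/\fkS_\nu\to \fkS_d/\fkS_\lambda$.

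Granting this, for $x\in Y_\nu$ and $y\in Y_\lambda$ I would compute
\[
(S_{\nu\mu}*S_{\mu\lambda})(x,y)=\sum_{z\in Y_\mu}\delta_{p_{\nu\mu}(x),z}\,e(z)\,e(z)^{-1}\,\delta_{p_{\mu\lambda}(z),y}\,e(y)=\delta_{p_{\nu\lambda}(x),y}\,e(y)=S_{\nu\lambda}(x,y),
\]
where the sum collapses to the single index $z=p_{\nu\mu}(x)$ and one applies the functoriality above. The computation for merges is entirely parallel: for $y\in Y_\lambda$ and $x\in Y_\nu$,
\[
(M_{\lambda\mu}*M_{\mu\nu})(y,x)=\sum_{z\in Y_\mu}\delta_{y,p_{\mu\lambda}(z)}\,e(y)\,e(z)^{-1}\,\delta_{z,p_{\nu\mu}(x)}\,e(z)=\delta_{y,p_{\nu\lambda}(x)}\,e(y)=M_{\lambda\nu}(y,x),
\]
the sum again collapsing to $z=p_{\nu\mu}(x)$. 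For the last assertion, take $\nu=\mu=\lambda$: then $p_{\lambda\lambda}=\mathrm{id}$, so $S_{\lambda\lambda}(x,y)=\delta_{x,y}e(y)=M_{\lambda\lambda}(y,x)$, which is exactly the idempotent $1_\lambda$ of~\eqref{def:1rKmrt}; idempotency is the special case $\nu=\mu=\lambda$ of the first identity, and $S_{\lambda\lambda}$ lies in $\overline{\bfS}^T_d$ since it is one of the generators (take $\nu=\lambda$, or $t=1$).

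There is no genuine obstacle here; the one point deserving a moment's care is the bookkeeping of the twist factors, namely that the value $e(z)$ emitted by the first split (resp.\ merge) is immediately cancelled by the $e(z)^{-1}$ appearing in~\eqref{eq:twisted-conv}, so that only the target twist factor $e(y)$ survives. Thus the lemma reduces cleanly to the functoriality of the projections $p_{\nu\lambda}$.
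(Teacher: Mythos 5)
Your proposal is correct and is essentially the paper's own argument: the same direct evaluation of the twisted convolution, with the sum collapsing to $z=p_{\nu\mu}(x)$, the twist $e(z)$ cancelling the $e(z)^{-1}$ from \eqref{eq:twisted-conv}, and the identity reducing to $p_{\mu\lambda}\circ p_{\nu\mu}=p_{\nu\lambda}$ (the paper only writes out the split case, noting the merge case is identical). The extra remarks on $S_{\lambda\lambda}=M_{\lambda\lambda}=1_\lambda$ are also fine.
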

\begin{proof}
  Left to the reader.
\end{proof}

It is clear from the definition that $\bfS^\cT_d\subseteq \overline{\bfS}^\cT_d$.

\begin{prop}\label{coil-is-laurel-m-inv}
  We have $\bfS^\cT_d = \overline{\bfS}^\cT_d$ if $m_\lambda$ is invertible for all $\lambda$.
\end{prop}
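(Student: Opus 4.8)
The inclusion $\bfS^T_d\subseteq\overline{\bfS}^T_d$ being already recorded, the plan is to establish the reverse inclusion by showing that each of the three families of generators of $\overline{\bfS}^T_d$ lies in $\bfS^T_d$: the diagonal elements $t_\lambda$ for $t\in T^{\fkS_\lambda}$, the partial splits $S_{\nu\lambda}$, and the partial merges $M_{\lambda\nu}$, for all $\nu\vDash\lambda$ in $\Lambda$.

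The main work will be to realize every $t_\lambda$ with $t\in T^{\fkS_\lambda}$ inside $\bfS^T_d$, and I would do this in the faithful polynomial representation $\bfT_d$ of \cref{prop:poly-faithful}. There $S_\lambda$ is the inclusion $\cR^{\fkS_\lambda}\hookrightarrow\cR$, the element $s_\omega\in\bfH_d^T$ (for $s\in T$) acts by multiplication by $s$, and $M_\lambda$ is the twisted Demazure operator $g\mapsto\partial_\lambda(g\prod_{(i,j)\in L_\lambda}P_{ij})$ of \eqref{eq:merge-Demazure}. Composing these three operations and pulling the $\fkS_\lambda$-invariant factors through $\partial_\lambda$ shows that $M_\lambda\,s_\omega\,S_\lambda$ acts by multiplication by $\partial_\lambda\bigl(s\prod_{(i,j)\in L_\lambda}P_{ij}\bigr)\in T^{\fkS_\lambda}$, so, since $S_\lambda$ kills all components but $\cR^{\fkS_\lambda}$, faithfulness of $\bfT_d$ forces $M_\lambda\,s_\omega\,S_\lambda=\bigl(\partial_\lambda(s\prod_{(i,j)\in L_\lambda}P_{ij})\bigr)_\lambda$ in $\bfS_d$; taking $s=1$ recovers $m_\lambda=\partial_\lambda(\prod_{(i,j)\in L_\lambda}P_{ij})$ (cf.\ \cref{cor:m-lam}). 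Now, given $t\in T^{\fkS_\lambda}$, I would put $s\coloneqq m_\lambda^{-1}t$, which lies in $T^{\fkS_\lambda}\subseteq T$ since $m_\lambda$ is invertible in $T^{\fkS_\lambda}$ by hypothesis (cf.\ \eqref{eq:cond-mu}); pulling the symmetric factor through $\partial_\lambda$ gives $\partial_\lambda(s\prod_{(i,j)\in L_\lambda}P_{ij})=m_\lambda^{-1}t\cdot m_\lambda=t$, hence $t_\lambda=M_\lambda\,(m_\lambda^{-1}t)_\omega\,S_\lambda$, the right-hand side being a product of generators of $\bfS^T_d$. In particular $m_\nu^{-1}\in\bfS^T_d$ for all $\nu\in\Lambda$.

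The partial splits and merges then follow from \cref{lm:SM-assoc}: since $\omega\vDash\nu\vDash\lambda$ we get $S_\nu S_{\nu\lambda}=S_\lambda$ and $M_{\lambda\nu}M_\nu=M_\lambda$; multiplying by $M_\nu$ on the left, resp.\ by $S_\nu$ on the right, and using $M_\nu S_\nu=m_\nu$, these become $m_\nu S_{\nu\lambda}=M_\nu S_\lambda$ and $M_\lambda S_\nu=M_{\lambda\nu}m_\nu$, so $S_{\nu\lambda}=m_\nu^{-1}M_\nu S_\lambda$ and $M_{\lambda\nu}=M_\lambda S_\nu\,m_\nu^{-1}$ both lie in $\bfS^T_d$. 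This exhausts the generators of $\overline{\bfS}^T_d$. I expect the only genuinely delicate point to be the surjectivity used in the second paragraph — that $\partial_\lambda\bigl(-\cdot\prod_{(i,j)\in L_\lambda}P_{ij}\bigr):T\to T^{\fkS_\lambda}$ hits everything — which is exactly where invertibility of $m_\lambda$ is needed, and where one must be careful that $m_\lambda^{-1}$ lives in $T$ rather than merely in the fraction field $\cR$; the rest is formal manipulation of the relations $S_\nu S_{\nu\lambda}=S_\lambda$, $M_{\lambda\nu}M_\nu=M_\lambda$ and $M_\nu S_\nu=m_\nu$.
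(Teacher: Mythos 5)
Your proposal is correct and takes essentially the same route as the paper: both reduce the claim to writing the laurel generators in terms of coil generators via \cref{lm:SM-assoc} and $M_\nu S_\nu = m_\nu$, namely $S_{\nu\lambda} = m_\nu^{-1}M_\nu S_\lambda$, $M_{\lambda\nu} = M_\lambda S_\nu m_\nu^{-1}$, and $t_\lambda = M_\lambda\,(m_\lambda^{-1}t)_\omega\,S_\lambda$ for $t\in T^{\fkS_\lambda}$. The only (cosmetic) difference is that you verify the last identity in the faithful polynomial representation via \eqref{eq:merge-Demazure} and \cref{prop:poly-faithful} — where your step $m_\lambda^{-1}t\,m_\lambda = t$ is harmless since $m_\lambda$ is built from the elements $\alpha_{ij}$, $\alphab_{ij}$, which are central under \eqref{eq:C2} (or simply take $s = t\,m_\lambda^{-1}$ to avoid the issue) — whereas the paper obtains it directly from $M_\lambda S_\lambda = m_\lambda$ and \cref{lm:poly-past-sm}.
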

\begin{proof}
  We have 
  \[
  M_{\lambda\nu} = M_{\lambda\nu}M_\nu S_\nu m_\nu^{-1} = M_\lambda S_\nu m_\nu^{-1}, \qquad S_{\nu\lambda} = m_\nu^{-1}M_\nu S_\lambda,
  \]
  by \cref{lm:SM-assoc}, so that it only remains to prove that any $t\in \cT_{\fkS_d}(Y_\lambda)$ belongs to $\bfS^\cT_d$.
  However, $t = M_\lambda S_\lambda m^{-1}_\lambda t = M_\lambda m^{-1}_\lambda t' S_\lambda$, where $t'$ is the image of $t$ in $\cT = \cT_{\fkS_d}(\fkS_d)$. We are done.
\end{proof}

The goal of this section is to prove the following Schur duality without an invertibility assumption.
\begin{thm}\label{thm:strong-DCP}
The Schur duality between $B \wr \cH(d)$ and $\overline{\bfS}^{\mathrm{BLM}}$ holds, i.e.,  $\End_{\overline{\bfS}^\cT_d}(\tC^\cT) = B\wr \cH(d)$, $\overline{\bfS}^\cT_d=\End_{B\wr \cH(d)}(\tC^\cT)$.
In particular, $B\wr \cH(d) = 1_{\omega}\overline{\bfS}^\cT_d1_{\omega}$,
and  the Schur functor is given by $\overline{\bfS}^\cT_d\mathrm{-mod}$ $\to B\wr \cH(d)\mathrm{-mod}$, $M \mapsto 1_{\omega} M$.
\end{thm}

By \cref{lm:inclusions-projections}, we have a natural inclusion 
  \[
    \psi = \psi^L_\lambda\circ \psi^R_\mu:\bfS_{\lambda\mu}\hookrightarrow \bfH_d, \qquad x\mapsto S_\lambda x M_\mu.
  \]
Let us denote by $\overline{\tA}_{\lambda\mu}$ the intersection $K_\lambda\bfH_d^\cT\cap \bfH_d^\cT K_\mu$.
Since $K_\lambda = S_\lambda M_\lambda$, all such elements belong to the image of $\psi$; we will therefore implicitly identify $\overline{\tA}_{\lambda\mu}$ with its preimage in $\bfS_{\lambda\mu}$ under $\psi$.
Note that we can alternatively write 
\[
  \overline{\tA}_{\lambda\mu} = \psi^L_\lambda(M_\lambda\bfH_d^\cT)\cap \psi^R_\mu(\bfH_d^\cT S_\mu).
\]
Let $\lambda,\mu\in\Lambda$, and $w,\nu,\delta$ as in \cref{prop:Mackey}.
Consider the element 
\begin{equation}\label{eq:smart-crossing}
\widetilde{H}_w \coloneqq H_w K_\nu.
\end{equation}
We have $\widetilde{H}_w = (H_w S_\nu) M_\nu$, but also by braid relations $\widetilde{H}_w = K_{\delta}H_w = S_{\delta}(M_{\delta}H_w)$; therefore $\widetilde{H}_w\in \overline{\tA}_{\delta\nu}$.

\begin{lem}\label{lm:MHcapHS-spanning}
  For any $\lambda,\mu\in\Lambda$ the vector space $\overline{\tA}_{\lambda\mu}\subseteq \bfS_d$ is spanned by elements of the form $M_{\lambda\nu} b \widetilde{H}_w S_{\delta\mu}$, where $w\in {}^\lambda \fkS^\mu$, $b\in T^{\fkS_\nu}$, and $\nu,\delta$ are as in \cref{sec:bases-div}.
\end{lem}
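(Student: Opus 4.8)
The plan is to reduce to \cref{prop:Mackey}(a) and then match two spanning sets by a triangular computation, in the spirit of \cref{prop:basis-div}.

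By \cref{prop:Mackey}(a) the space $K_\lambda\bfH_d^T\cap\bfH_d^T K_\mu=\psi(\overline{\tA}_{\lambda\mu})$, where $\psi\coloneqq\psi^L_\lambda\circ\psi^R_\mu$ as in \cref{lm:inclusions-projections}, is spanned by the elements $K_\lambda\, b\, H_g\, K_\mu^{\delta(g)}$, with $g$ ranging over ${}^\lambda\fkS^\mu$ and $b$ over a $\bbk$-basis of $T^{\fkS_{\nu(g)}}$. Since $\psi$ is injective (a composite of the two injective maps $\psi^L_\lambda$, $\psi^R_\mu$), the claim will follow once I show: (i) each product $M_{\lambda\nu}\,b\,\widetilde H_g\,S_{\delta\mu}$, with $\nu=\nu(g)$, $\delta=\delta(g)$ and $b\in T^{\fkS_\nu}$, lies in $\overline{\tA}_{\lambda\mu}$, i.e.\ $\psi(M_{\lambda\nu}b\widetilde H_g S_{\delta\mu})\in K_\lambda\bfH_d^T\cap\bfH_d^T K_\mu$; and (ii) these products span $\overline{\tA}_{\lambda\mu}$.

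For (i) I would compute $\psi(M_{\lambda\nu}b\widetilde H_g S_{\delta\mu})=S_\lambda M_{\lambda\nu}\, b\,\widetilde H_g\, S_{\delta\mu}M_\mu$ and invoke \cref{lm:SM-assoc} together with \eqref{eq:smart-crossing}: on the left $K_\lambda=S_\lambda M_\lambda=S_\lambda M_{\lambda\nu}M_\nu$, so inserting $M_\nu$ exhibits the element in $K_\lambda\bfH_d^T$; on the right $K_\mu=K_\delta K_\mu^{\delta}$ and $\widetilde H_g=K_\delta H_g$, so it also lies in $\bfH_d^T K_\mu$. For (ii) I would expand the generator $K_\lambda b H_g K_\mu^{\delta(g)}$ using $K_\lambda=S_\lambda M_{\lambda\nu}M_\nu$, the relations $\widetilde H_g=H_g K_\nu=K_\delta H_g$, the decompositions of $K_\mu$ and $M_\mu$ from \cref{decomposeKlamb} and \cref{lm:SM-assoc}, and \cref{lem:SM-eat-Hw} to slide the $\fkS_\nu$-symmetric element $b$ and the $\alpha$-coefficients past the splits and merges. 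The expected outcome is a relation
\[
  K_\lambda\, b\, H_g\, K_\mu^{\delta(g)}\;=\;\psi\!\big(M_{\lambda\nu}\,b'\,\widetilde H_g\,S_{\delta\mu}\big)\;+\;\bigl(\text{a $\bbk$-combination of }\psi(M_{\lambda\nu'}b''\widetilde H_{g'}S_{\delta'\mu})\text{ with }g'<g\bigr),
\]
in which $b\mapsto b'$ is an invertible $\bbk$-linear endomorphism of $T^{\fkS_\nu}$, namely multiplication by an invertible element, exactly as in the proof of \cref{prop:basis-div}, the invertibility resting on \eqref{eq:C3}. Running this triangularly over the natural partial order on ${}^\lambda\fkS^\mu$ shows the two families span the same $\bbk$-space, which with (i) gives the statement.

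The main obstacle is the computation underlying the displayed relation: one must carefully track how the partial merge $M_{\lambda\nu}$, the interpolated crossing $\widetilde H_g$ and the partial split $S_{\delta\mu}$ interact once everything is rewritten against the PBW basis of \cref{prop:basis}, and verify that all terms other than the leading one involve a strictly shorter element of ${}^\lambda\fkS^\mu$. This is a lengthy and delicate bookkeeping, completely parallel to the computations behind \cref{prop:basis-div} and \cite[Theorem 4.10]{maksimauKLRSchurAlgebras2022a}, and is the only substantial step.
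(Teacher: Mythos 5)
You follow the paper's route — identify $\overline{\tA}_{\lambda\mu}$ with $K_\lambda\bfH_d^T\cap\bfH_d^T K_\mu$ via the injective map $\psi$ and invoke \cref{prop:Mackey}(a) — but the step you defer as ``the only substantial step'' is precisely where your plan misfires, and also where no real work is needed. The matching of the two families under $\psi$ is an exact identity, not a triangular one: combining $S_\lambda M_{\lambda\nu}=\widetilde{K}_\lambda^{\nu}S_\nu$ and $S_{\delta\mu}M_\mu=M_\delta K_\mu^{\delta}$ (as in the proof of \cref{lm:dumb-vs-smart-crossing}), the split/merge-past-crossing relations recorded after \eqref{eq:smart-crossing} (which give $S_\nu\,\widetilde{H}_g\,M_\delta=K_\nu H_g=H_gK_\delta$), the fact that $b\in T^{\fkS_\nu}$ slides past $S_\nu$ and $M_\nu$ by \cref{lm:poly-past-sm} and hence commutes with $K_\nu$, and \cref{decomposeKlamb}, one gets
\[
\psi\bigl(M_{\lambda\nu}\,b\,\widetilde{H}_g\,S_{\delta\mu}\bigr)
=\widetilde{K}_\lambda^{\nu}\,b\,K_\nu H_g\,K_\mu^{\delta}
=\widetilde{K}_\lambda^{\nu}K_\nu\,b\,H_g\,K_\mu^{\delta}
=K_\lambda\,b\,H_g\,K_\mu^{\delta}
=K_{A,b},
\]
with no lower-order terms and with $b$ unchanged. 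Thus the proposed elements are carried exactly onto the basis elements of \cref{prop:Mackey}(a), which gives both that they lie in $\overline{\tA}_{\lambda\mu}$ and that they span it; this short computation is the entire content of the paper's one-line proof.

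By contrast, your fallback — a leading term plus terms at $g'<g$ with an ``invertible'' change $b\mapsto b'$ resting on \eqref{eq:C3} — would not close the argument even after the bookkeeping. Condition \eqref{eq:C3} only makes $P$ (hence the $P_{ij}$ and the $m_\nu$) non-zero-divisors; they are not units of $T=B^{\otimes d}$, and avoiding their inversion is the raison d'\^etre of the laurel Schur algebra. The analogy with \cref{prop:basis-div} does not transfer: there the invertible triangular change of basis lives in the convolution algebra over the fraction field $\cR$ and is used only for linear independence, while the spanning step of that proof uses $m_\nu^{-1}$, i.e.\ exactly the hypothesis being dropped here. If your rewriting really produced $K_\lambda bH_gK_\mu^{\delta(g)}=\psi(M_{\lambda\nu}b'\widetilde{H}_gS_{\delta\mu})+\text{lower terms}$ with $b'$ equal to $b$ times a non-unit of $T$, you could only reach those generators whose coefficient lies in the image of that multiplication map, and the spanning conclusion would fail; you would also still need to show the lower terms remain in the proposed family, which you do not address. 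Your step (i) is correct in substance, but the clean justification is again the displayed identity, since $K_\lambda\,b\,H_g\,K_\mu^{\delta}$ visibly lies in $K_\lambda\bfH_d^T\cap\bfH_d^TK_\mu$ by \cref{decomposeKlamb}.
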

\begin{proof}
  Applying $\psi$, this follows from \cref{prop:Mackey}(a).
\end{proof}

\begin{lem}\label{lm:dumb-vs-smart-crossing}
  Let $\lambda = (d_1,d_2)$, $\mu = (d_2,d_1)$.
  For any $0\leq i\leq \min(d_1,d_2)$ denote $\nu_i = (i,d_1-i,d_2-i,i)$, $\delta_i = (i,d_2-i,d_1-i,i)$, and $w_i\in\fkS_d$ the shuffle sending $\nu_i'$ to $\nu_i$.
  Denote $c_i = \prod_{1\leq i',j'\leq i} \alpha_{i',d-i+j'}$.
  Then we have $S_{\lambda,(d)}M_{(d),\mu} = \sum_{i=0}^{\min(d_1,d_2)} c_i M_{\lambda,\nu_i}\widetilde{H}_{w_i} S_{\delta_i,\mu}$.
\end{lem}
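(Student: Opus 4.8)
The plan is to reduce the asserted identity to the Mackey decomposition \eqref{eq:thickest-bialg}. Recall from \cref{lm:inclusions-projections} the injective map $\psi=\psi^L_\lambda\circ\psi^R_\mu\colon\bfS_{\lambda\mu}\hookrightarrow\bfH_d$, $x\mapsto S_\lambda*x*M_\mu$; since it is injective, it suffices to prove the identity after applying $\psi$. Applying associativity of (partial) splits and merges (\cref{lm:SM-assoc}) to the chains $\omega\vDash\lambda\vDash(d)$ and $\omega\vDash\mu\vDash(d)$, one gets $S_\lambda*S_{\lambda,(d)}=S_{(d)}$ and $M_{(d),\mu}*M_\mu=M_{(d)}$, hence
\[
\psi\bigl(S_{\lambda,(d)}*M_{(d),\mu}\bigr)=S_{(d)}*M_{(d)}=K_{(d)},
\]
the last equality being the identity $S_\nu*M_\nu=K_\nu$ specialised to $\nu=(d)$. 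Thus it remains to show that $\psi$ sends the right-hand side to $K_{(d)}$ as well.

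For the right-hand side I would treat each summand separately. Fix $0\le i\le\min(d_1,d_2)$ and write $g=w_i$, $\nu=\nu_i$, $\delta=\delta_i$; note that $\nu=\delta^r(\lambda,g,\mu)$ and $\delta=\delta^c(\lambda,g,\mu)$, so that $\widetilde H_{w_i}=H_{w_i}K_\nu=K_\delta H_{w_i}$ in the notation of \eqref{eq:smart-crossing}. Pushing $M_{\lambda\nu_i}*\widetilde H_{w_i}*S_{\delta_i\mu}$ through $\psi$ and repeatedly collapsing compositions of full and partial splits and merges via \cref{lm:SM-assoc}, together with the factorisations $K_\lambda=\widetilde K^\nu_\lambda K_\nu$ and $K_\mu=K_\delta K^\delta_\mu$ of \cref{decomposeKlamb}, one identifies the image with the $g$-term $K^\nu_\lambda H_g K_\delta K^\delta_\mu$ appearing in \eqref{eq:thickest-bialg}, up to the scalar $\alpha_A$. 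It then remains to check the combinatorial identity $c_i=\alpha_{A_i}$: using the matrix $A_i=\bigl(\begin{smallmatrix}i & d_1-i\\ d_2-i & i\end{smallmatrix}\bigr)$ and the characterisation of $w_i$ as the distinguished double-coset representative, one verifies that $(N_\lambda\cap w_i(N_\mu))\setminus\mathrm{Inv}(w_i)=\{(i',j'):1\le i'\le i,\ d-i+1\le j'\le d\}$, whose associated product of $\alpha_{i'j'}$'s is exactly $c_i$. Since, for $\lambda=(d_1,d_2)$ and $\mu=(d_2,d_1)$, the double cosets ${}^\lambda\fkS^\mu$ are indexed precisely by $i=0,\dots,\min(d_1,d_2)$, summing over $i$ and invoking \eqref{eq:thickest-bialg} gives $\psi(\mathrm{RHS})=K_{(d)}=\psi(\mathrm{LHS})$, and injectivity of $\psi$ concludes.

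The hard part will be the bookkeeping in the second step: reshuffling partial splits and merges so as to recognise $\psi\bigl(M_{\lambda\nu_i}*\widetilde H_{w_i}*S_{\delta_i\mu}\bigr)$ exactly as the corresponding summand of \eqref{eq:thickest-bialg}, keeping careful track of the relative $K$-elements $K^\nu_\lambda$, $\widetilde K^\nu_\lambda$, $K^\delta_\mu$ and of the scalars. If those convolution-algebra manipulations get out of hand, a safe alternative is to verify the identity directly on the faithful polynomial representation $\bfT_d$ of \cref{prop:poly-faithful}: there $M_{(d),\mu}$ acts as a weighted symmetrisation onto $\cR^{\fkS_d}$, $S_{\lambda,(d)}$ as the inclusion $\cR^{\fkS_d}\hookrightarrow\cR^{\fkS_\lambda}$, and $\widetilde H_{w_i}$ as a composition of twisted-Demazure operators (cf.\ \eqref{eq:merge-Demazure}), after which the two sides are compared by clearing denominators, which is legitimate by \eqref{eq:C3}. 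In either case the essential input is \cref{prop:Mackey}, of which this lemma is the diagrammatic avatar.
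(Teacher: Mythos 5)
Your proposal is correct and follows essentially the same route as the paper's proof: apply $\psi$, collapse the (partial) splits and merges via \cref{lm:SM-assoc} and the factorizations of \cref{decomposeKlamb} so that each summand becomes the corresponding term $\alpha_A K_\lambda^{(\nu_i)}H_{w_i}K_{\delta_i}K_\mu^{(\delta_i)}$, and conclude from \cref{prop:Mackey}(b) after checking $c_i=\alpha_{A_i}$. Your explicit identification of $(N_\lambda\cap w_i(N_\mu))\setminus\mathrm{Inv}(w_i)$ simply spells out the step the paper dismisses as ``one easily checks.''
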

\begin{proof}
  It suffices to check this equality after applying $\psi$.
  Note the following simple equalities:
  \begin{gather*}
    S_\nu S_{\nu\lambda}M_\lambda = K_\lambda = S_\nu M_\nu K_\lambda^{(\nu)} \quad\Rightarrow\quad S_{\nu\lambda}M_\lambda = M_\nu K_\lambda^{(\nu)}, \\
    S_\lambda M_{\lambda\nu}M_\nu = K_\lambda = K_\lambda^{(\nu)}S_\nu M_\nu  \quad\Rightarrow\quad S_\lambda M_{\lambda\nu} = K_\lambda^{(\nu)} S_\nu.
  \end{gather*}
  Using these and the associativity equations of \cref{lm:SM-assoc}, we get in the image of $\psi$
  \begin{align*}
    K_{(d)} &= \sum\nolimits_{i=0}^{\min(d_1,d_2)} c_iS_\lambda M_{\lambda,\nu_i}\widetilde{H}_{w_i} S_{\delta_i,\mu}M_\mu\\
     &= \sum\nolimits_{i=0}^{\min(d_1,d_2)} c_iK^{(\nu_i)}_{\lambda}S_{\nu_i} \widetilde{H}_{w_i} M_{\delta_i}K^{(\delta_i)}_{\mu}
     = \sum\nolimits_{i=0}^{\min(d_1,d_2)} c_iK^{(\nu_i)}_{\lambda}H_{w_i} K_{\delta}K^{(\delta_i)}_{\mu}.
  \end{align*}
  One easily checks that $c_i = \alpha_A$ for $A = (\lambda,w_i,\mu)$, so we can conclude by \cref{prop:Mackey}(b).
\end{proof}

\subsection{Proof of Schur duality}
\begin{proof}[Proof of \cref{thm:strong-DCP}]
  The actions of $B\wr \cH(d) = \bfH^\cT_d$ and $\overline{\bfS}^\cT_d$ on $\tC$ manifestly commute.
  Moreover, the action of $\overline{\bfS}^\cT_d$ descends to $\tC^\cT$. Indeed,
  \begin{gather*}
    t_\lambda (M_\lambda F) = M_\lambda (\widetilde{t}_\lambda F);\qquad M_{\lambda\nu}(M_\nu F) = M_\lambda F,\qquad S_{\nu\lambda}(M_\lambda F) = M_\nu (K_\lambda^{(\nu)} F).
  \end{gather*}
  It follows that $\overline{\bfS}^\cT_d\subset \End_{\bfH_d^\cT}(\tC^\cT)$.
  The first equality also immediately follows:
  \[
\bfH^\cT_d = \End_{\bfS^\cT_d}(\tC^\cT)\supseteq \End_{\overline{\bfS}^\cT_d}(\tC^\cT) \supseteq \bfH^\cT_d \quad \Rightarrow \quad \End_{\overline{\bfS}^\cT_d}(\tC^\cT) = \bfH_d^\cT.
  \]
  It remains to show the inclusion $\End_{\bfH_d^\cT}(\tC^\cT)\subseteq \overline{\bfS}^\cT_d$.
  As in the proof of \cref{thm:SW-invertible}, a map $P$ of $\bfH^\cT_d$-modules $\tC^\cT_\lambda\to \tC^\cT_\mu$ is completely determined by the element $P(M_\lambda)$.
  Moreover, it has to satisfy the conditions of \cref{decomposeKlamb}(2) by \cref{lem:SM-eat-Hw}:
  \begin{align*}
    P(M_\lambda)H_i = P(M_\lambda H_i)
     = P(M_\lambda(\sigma_i(\alpha_i)+S_i))
     = P(M_\lambda)(\sigma_i(\alpha_i)+S_i).
  \end{align*}
  Therefore, $\End_{\bfH_d^\cT}(\tC^\cT)\subseteq \overline{\tA}_{\lambda\mu}$.
  By \cref{lm:MHcapHS-spanning}, every element of $\overline{\tA}_{\lambda\mu}$ is written as a product of partial splits, partial merges, elements of $T^{\fkS_\nu}$ and $\widetilde{H}_w$. 
  Note that $\widetilde{H}_w$ belongs to $\overline{\bfS}^\cT_d$.
  Indeed, let us write $w$ as a reduced expression $s_{i_1}\ldots s_{i_l}$, where $s_{i_j}$ are elementary transpositions in $\fkS_{|\nu|}$.
  By definition, $\widetilde{H}_w = S_{\nu}(M_\nu H_w) = (H_w S_\delta)M_\delta$ as an element of $\tA_{\nu\delta}$.
  Given another $\widetilde{H}_{w'} = S_{\delta}(M_\delta H_{w'}) = (H_{w'} S_{\delta'})M_{\delta'}$, we can compute the product $\widetilde{H}_w\widetilde{H}_{w'}$ inside $\bfS_d$, but as an element of $\overline{\tA}_{\nu\delta}$, as follows:
  \[
    \widetilde{H}_{w}\widetilde{H}_{w'} = (H_w S_\delta)(M_\delta H_{w'}) = H_w K_\delta H_{w'} = H_w H_{w'} K_{\delta'} = H_{ww'}K_{\delta'} = \widetilde{H}_{ww'}.
  \]
  Reasoning by induction, we obtain $\widetilde{H}_w = \widetilde{H}_{s_{i_1}}\ldots \widetilde{H}_{s_{i_l}}$.  
  \cref{lm:dumb-vs-smart-crossing} implies that each $\widetilde{H}_{s_{i_j}}$ is expressed inductively in terms of partial splits and merges:
  \begin{align*}
    \widetilde{H}_{s_{i_j}} = \widetilde{H}_{w_0} = S_{\lambda,(d)}M_{(d),\mu} - \sum\nolimits_{i=1}^{\min(d_1,d_2)} c_i M_{\lambda,\nu_i}\widetilde{H}_{w_i} S_{\delta_i,\mu}.
  \end{align*}
  Therefore, $\overline{\tA}_{\lambda\mu}\subseteq \overline{\bfS}^\cT_d$, and we may conclude.
\end{proof}

\begin{cor}
Let $\lambda,\mu\in \Lambda$.
For each $g\in {}^\lambda \fkS^\mu$, let 
$\nu(g) \coloneqq \delta^r(\lambda, g, \mu)$,
and pick a $\bbk$-basis $\overline{\mathbb{B}}_g$ of $\cT^{\fkS_{\nu(g)}}$.
  Then, the following set forms a $\bbk$-basis of $\overline{\bfS}^\cT_{\lambda,\mu}$:
  \begin{equation}\label{eq:spanning-set2}
    \{ M_{\lambda\nu} b \widetilde{H}_g S_{\delta\mu} ~|~ g \in {}^\lambda \fkS^\mu, b\in \overline{\mathbb{B}}_g\}.
  \end{equation}
\end{cor}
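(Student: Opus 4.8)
The plan is to transport the basis of \cref{prop:Mackey}(a) along an isomorphism $\overline{\bfS}^T_{\lambda,\mu}\cong\overline{\tA}_{\lambda\mu}$, where $\overline{\tA}_{\lambda\mu}=K_\lambda\bfH^T_d\cap\bfH^T_d K_\mu$. First I would record this isomorphism, reusing the bookkeeping from the proof of \cref{thm:strong-DCP}. Namely, $\overline{\bfS}^T_{\lambda,\mu}=1_\lambda*\overline{\bfS}^T_d*1_\mu$ is canonically identified via \cref{thm:strong-DCP} with $\Hom_{\bfH^T_d}(\tC^T_\mu,\tC^T_\lambda)$; a homomorphism $P$ in this space is determined by its value at the cyclic generator $M_\mu$ of $\tC^T_\mu$, and by \cref{lem:SM-eat-Hw} the element $P(M_\mu)\in M_\lambda\bfH^T_d$ satisfies $P(M_\mu)H_i=P(M_\mu)\alphab_i$ for all $i$ with $s_i\in\fkS_\mu$. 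By \cref{lm:probing-with-H} this pins $P(M_\mu)$ down to $M_\lambda\bfH^T_d\cap\bfH^T_d K_\mu$, and composing with the injection $\psi^L_\lambda$ of \cref{lm:inclusions-projections} (left multiplication by $S_\lambda$, sending $M_\lambda\mapsto K_\lambda$) yields the desired isomorphism $\psi\colon\overline{\bfS}^T_{\lambda,\mu}\xrightarrow{\sim}\overline{\tA}_{\lambda\mu}$, $x\mapsto S_\lambda*x*M_\mu$.

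Spanning is then immediate: \cref{lm:MHcapHS-spanning} already asserts that $\overline{\tA}_{\lambda\mu}$ is spanned by the elements $M_{\lambda\nu(g)}\,b\,\widetilde H_g\,S_{\delta(g)\mu}$ with $g\in{}^\lambda\fkS^\mu$ and $b\in T^{\fkS_{\nu(g)}}$, and since $b\mapsto M_{\lambda\nu(g)}\,b\,\widetilde H_g\,S_{\delta(g)\mu}$ is $\bbk$-linear, letting $b$ run only over a basis $\overline{\mathbb{B}}_g$ of $T^{\fkS_{\nu(g)}}$ still spans; pulling back along $\psi$, the set~\eqref{eq:spanning-set2} spans $\overline{\bfS}^T_{\lambda,\mu}$. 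For linear independence I would compute $\psi$ explicitly on these elements. Using \cref{lm:SM-assoc}, the identities $S_\lambda M_\lambda=K_\lambda$, $K_\nu=S_\nu M_\nu$, the factorizations $K_\lambda=\widetilde{K}_\lambda^{\nu(g)}K_{\nu(g)}$ and $K_\mu=K_{\delta(g)}K_\mu^{\delta(g)}$ of \cref{decomposeKlamb}, and \cref{lm:poly-past-sm} to move $b$ past splits and merges, I expect to arrive at $\psi(M_{\lambda\nu(g)}\,b\,\widetilde H_g\,S_{\delta(g)\mu})=K_\lambda\,b\,H_g\,K_\mu^{\delta(g)}=K_{A,b}$ for $A=(\lambda,g,\mu)$. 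Then $\psi$ sends~\eqref{eq:spanning-set2} bijectively onto the basis $\{K_{A,b}\}$ of $\overline{\tA}_{\lambda\mu}$ from \cref{prop:Mackey}(a), and injectivity of $\psi$ forces~\eqref{eq:spanning-set2} to be linearly independent, hence a basis.

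The main obstacle I foresee is precisely that $\psi$-computation: keeping track of which of $\nu(g)=\delta^r(\lambda,g,\mu)$ and $\delta(g)=\delta^c(\lambda,g,\mu)$ sits on which side, and checking that collapsing the outer full splits and merges against the partial ones produces no spurious $\alpha_{ij}$ factors. If the clean identity above turns out awkward to establish head-on, a robust alternative is to argue by triangularity exactly as in \cref{prop:basis-div}: expanding in the basis $\{\xi_{g,f}\}$ of \cref{lm:dumb-basis} for the $(\lambda,\mu)$-block of $\bfS_d$, the element $M_{\lambda\nu(g)}\,b\,\widetilde H_g\,S_{\delta(g)\mu}$ should have highest term $\xi_{g,\,b\,u_g}$ with $u_g\in\cR$ a unit (by the computation parallel to~\cite[Theorem 4.10]{maksimauKLRSchurAlgebras2022a}), and these leading terms are independent as $(g,b)$ varies. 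Everything else follows directly from results already in place.
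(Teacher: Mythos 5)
Your proposal is correct, and its skeleton (spanning from \cref{lm:MHcapHS-spanning}, linear independence separately) is the paper's; the difference lies in how independence is certified. The paper simply reruns the leading-term argument of \cref{prop:basis-div}: expand $M_{\lambda\nu}\,b\,\widetilde H_g\,S_{\delta\mu}$ in the basis $\xi_{g,f}$ of \cref{lem:dumb-basis} and observe an upper-triangular (in $g$) change of basis with invertible leading coefficients --- which is exactly your fallback. Your preferred route instead pushes the candidate set through $\psi$ and matches it with the basis $\{K_{A,b}\}$ of \cref{prop:Mackey}(a); this is viable and arguably cleaner, since the independence statement is then already contained in \cref{prop:Mackey} rather than requiring the maksimau-style computation a second time. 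Two remarks on making it airtight. First, your preliminary identification $\overline{\bfS}^T_{\lambda,\mu}\cong\overline{\tA}_{\lambda\mu}$ is precisely what the paper leaves implicit: it is the pair of inclusions established inside the proof of \cref{thm:strong-DCP} (via \cref{lem:SM-eat-Hw}, \cref{lm:probing-with-H}, \cref{lm:MHcapHS-spanning}); note only that \cref{lm:probing-with-H} is stated for elements of $\bfH^T_d$, so one should first apply $\psi^L_\lambda$ to $P(M_\mu)\in M_\lambda\bfH^T_d$ (turning $M_\lambda$ into $K_\lambda$) and then probe, not the other way round. Second, the $\psi$-computation you flag does go through by the same manipulations as in \cref{lm:dumb-vs-smart-crossing} and \cref{lm:SM-assoc}: $S_\lambda M_{\lambda\nu}=\widetilde K^{\nu}_\lambda S_\nu$, $S_{\delta\mu}M_\mu=M_\delta K^{\delta}_\mu$ and $S_\nu\widetilde H_g M_\delta=\widetilde H_g$ yield
\[
\psi\bigl(M_{\lambda\nu}\,b\,\widetilde H_g\,S_{\delta\mu}\bigr)=\widetilde K^{\nu}_\lambda\, b\, K_{\nu} H_g\, K^{\delta}_\mu ,
\]
and to land on $K_{A,b}=\widetilde K^{\nu}_\lambda K_{\nu}\, b\, H_g K^{\delta}_\mu$ on the nose you still need $b\,K_{\nu}=K_{\nu}\,b$ for $b\in T^{\fkS_{\nu}}$; this is where the standing assumption \eqref{eq:C2} enters, since $\sigma_i(b)=b$ and $\rho_i(b)=\partial^\beta_i(b)=0$ for $s_i\in\fkS_{\nu}$ by centrality of $\beta$, and the coefficients $\alpha_{ij}$ of $K_{\nu}$ are central as well. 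With that one extra observation your route closes; otherwise, your triangularity fallback is verbatim the paper's argument.
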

\begin{proof}
  Thanks to \cref{lm:MHcapHS-spanning}, it suffices to check linear independence. 
  This is done in the same way as in \cref{prop:basis-div}.
\end{proof}

In order to better explain the difference between coil Schur $\bfS_d^\cT$ and laurel Schur $\overline{\bfS}_d^\cT$, let us represent their bases diagrammatically.
We read algebra elements from right to left, and diagrams from bottom to top.
We represent the idempotents $1_{\lambda}$, see~\eqref{def:1rKmrt}, by drawing strands of thicknesses $\lambda_1,\ldots,\lambda_k$, elements $t\in \cT$ by coupons on strands, splits and merges by splits and merges, and the elements $\widetilde{H}_w$ by crossings of thick strands.
Our definition~\eqref{eq:smart-crossing} translates to ``splits/merges go past crossings''. For example:
\[
  \tikz[xscale=.3,yscale=.3]{
      \ntxt{-4}{3}{$S_{(3,3)}\widetilde{H}_w M_{(3,3)}=$}

      \dmerge{0}{0}{2}{1}
      \dmerge{4}{0}{6}{1}
      \draw [very thick] (1,0.8) -- (1,1);
      \draw [very thick] (5,0.8) -- (5,1);
      \draw (1,0) -- (1,1);
      \draw (5,0) -- (5,1);
      \curlineth{1}{1}{5}{5}
      \curlineth{5}{1}{1}{5}
      \dsplit{0}{5}{2}{6}
      \dsplit{4}{5}{6}{6}
      \draw [very thick] (1,5) -- (1,5.2);
      \draw [very thick] (5,5) -- (5,5.2);
      \draw (1,5) -- (1,6);
      \draw (5,5) -- (5,6);

      \ntxt{6.7}{3}{$=$}

      \dmerge{8}{0}{10}{1}
      \dmerge{12}{0}{14}{1}
      \draw [very thick] (9,0.8) -- (9,1.2);
      \draw [very thick] (13,0.8) -- (13,1.2);
      \dsplit{8}{1}{10}{2}
      \dsplit{12}{1}{14}{2}
      \draw (9,0) -- (9,2);
      \draw (13,0) -- (13,2);
      \crosin{8}{2}{12}{6}
      \crosin{9}{2}{13}{6}
      \crosin{10}{2}{14}{6}

      \ntxt{18}{3}{$=H_w K_{(3,3)}.$}
		}
\]
Below is a typical presentation of a basis element~\eqref{eq:spanning-set} in coil Schur, and its translation into a basis element~\eqref{eq:spanning-set2} in laurel Schur, using \cref{lm:SM-assoc} and the equation above.
In this example 
$A\coloneqq\left(\begin{smallmatrix} 1&2 \\  1&0 \end{smallmatrix}\right)$
with 
$\lambda = (3,1)$, $\mu = (2,2)$, $\delta = (1,1,2)$, $\nu = (1,2,1)$, $g = s_3s_2$, $b = b_1\otimes b_2\otimes b_3\otimes b_4 \in B^{\otimes 4}$, and we denote $b_2*b_3 \coloneqq M_{\lambda\nu}(b_2\otimes b_3)S_{\nu\lambda}$ for simplicity:
\[
  \tikz[xscale=.35,yscale=.35,font=\footnotesize]{
      \dsplit{0}{0}{1}{1}
      \dsplit{3}{0}{4}{1}
      \draw (0,1) -- (0,4);
      \curline{1}{1}{4}{4}
      \curline{3}{1}{1}{4}
      \curline{4}{1}{2}{4}
      \opbox{-0.4}{4}{0.4}{5}{$b_1$}
      \opbox{0.6}{4}{1.4}{5}{$b_2$}
      \opbox{1.6}{4}{2.4}{5}{$b_3$}
      \opbox{3.6}{4}{4.4}{5}{$b_4$}
      \dmerge{0}{5}{2}{6}
      \draw (1,5) -- (1,6);
      \draw (4,5) -- (4,6); 

      \ntxt{6}{3}{$=$}

      \dsplit{8}{0}{9}{1}
      \draw (8,1) -- (8,4);
      \curline{9}{1}{12}{4}
      \curlineth{12}{0}{9.5}{3}
      \dsplit{9}{3}{10}{3.5}
      \draw [very thick] (9.5,3) -- (9.5,3.1);
      \opbox{7.6}{4}{8.4}{5}{$b_1$}
      \opbox{8.6}{3.5}{9.4}{4.5}{$b_2$}
      \opbox{9.6}{3.5}{10.4}{4.5}{$b_3$}
      \opbox{11.6}{4}{12.4}{5}{$b_4$}
      \dmerge{9}{4.5}{10}{5}
      \draw [very thick] (9.5,4.9) -- (9.5,5);
      \curline{8}{5}{8.75}{6}
      \curlineth{8.75}{6}{9.5}{5}
      \draw (12,5) -- (12,6); 

      \ntxt{14}{3}{$=$}

      \dsplit{16}{0}{17}{0.5}
      \draw (16,0.5) -- (16,4);
      \curline{17}{0.5}{20}{4}
      \curlineth{20}{0}{18}{3.5}
      \opbox{15.6}{4}{16.4}{5}{$b_1$}
      \opbox{16.6}{3.5}{19.4}{4.5}{$b_2*b_3$}
      \opbox{19.6}{4}{20.4}{5}{$b_4$}
      \draw [very thick] (18,4.5) -- (18,5);
      \curline{16}{5}{17}{6}
      \curlineth{17}{6}{18}{5}
      \draw (20,5) -- (20,6); 
		}
\]
In particular, only coupons valued in $M_\nu \cT S_\nu$ can appear on thick strands for the elements in coil Schur, while in laurel Schur coupons belong to a slightly larger space $\cT^{\fkS_\nu}$.

\begin{rmk}
  We do not pursue a description of $\overline{\bfS}^\cT_d$ in generators and relations here, but we expect a result similar to~\cite{song2024aaffine, song2024baffine}.
  For a more precise comparison, for $F = \bbk$ we should have $\overline{\bfS}^\cT_d = \End_{\mathscr{W}eb^\bullet}(\bigoplus_{\lambda \vDash d} \lambda)$, $\bfS^\cT_d = \End_{{\mathscr{W}eb^\bullet}'}(\bigoplus_{\lambda \vDash d} \lambda)$ in the notations of~\cite{song2024aaffine}.
  We nevertheless end up proving analogs of most of the defining relations in~\emph{loc. cit.}; here is a schematic comparison (omitting indices and coefficients):
  \[
    \begin{array}{|c|c|c|c|c|c|}
      \hline
      (2.3) & (2.4) & (2.5) & (2.6) & (2.7) & (2.8)\\ \hline 
      \tikz[thick,xscale=.3,yscale=.3]{
      \dmerge{0}{0}{2}{1}
      \draw (3,0) -- (3,1);
      \dmerge{1}{1}{3}{2}
      \draw (2,2) -- (2,3);
      \ntxt{3.7}{1}{$=$}
      \dmerge{5.5}{0}{7.5}{1}
      \draw (4.5,0) -- (4.5,1);
      \dmerge{4.5}{1}{6.5}{2}
      \draw (5.5,2) -- (5.5,3);
      \draw [white] (1,3) -- (1,3.2);
		}
    &
    \tikz[thick,xscale=.3,yscale=.3]{
      \dmerge{-0.5}{0}{1.5}{1}
      \draw (0.5,1) -- (0.5,2);
      \dsplit{-0.5}{2}{1.5}{3}
      \ntxt{2.7}{1.5}{$=\sum$}
      \draw (4.5,0) -- (4.5,3);
      \draw (6.5,0) -- (6.5,3);
      \draw (4.5,1) -- (6.5,2);
      \draw (4.5,2) -- (6.5,1);
		}
    &
    \tikz[thick,xscale=.3,yscale=.3]{
      \draw (1,0) -- (1,0.5);
      \dsplit{0}{0.5}{2}{1.5}
      \dmerge{0}{1.5}{2}{2.5}
      \draw (1,3) -- (1,2.5);
      \ntxt{3.7}{1.5}{$=\qnom{d}{\lambda}$}
      \draw (5.5,0) -- (5.5,3);
		}
    &
    \tikz[thick,xscale=.3,yscale=.3]{
      \crosin{-0.5}{0}{1.5}{3}
      \draw [fill=white] (-0.3,1.7) rectangle (0.3,2.3);
      \ntxt{2.7}{1.5}{$=\sum$}
      \draw (4.5,0) -- (4.5,3);
      \draw (6.5,0) -- (6.5,3);
      \draw (4.5,0.5) -- (6.5,2.5);
      \draw (4.5,2.5) -- (6.5,0.5);
      \draw [fill=white] (5.7,0.7) rectangle (6.3,1.3);
		}
    &
    \tikz[thick,xscale=.3,yscale=.3]{
      \draw (1,0) -- (1,1);
      \dsplit{0}{1}{2}{3}
      \draw [fill=white] (0.7,0.3) rectangle (1.3,0.9);
      \ntxt{3}{1.5}{$=$}
      \dsplit{4.5}{0}{6.5}{2}
      \draw (4.5,2) -- (4.5,3);
      \draw (6.5,2) -- (6.5,3);
      \draw [fill=white] (4.2,1.9) rectangle (4.8,2.5);
      \draw [fill=white] (6.2,1.9) rectangle (6.8,2.5);
		}
    &
    \tikz[thick,xscale=.3,yscale=.3]{
      \draw (1,0) -- (1,0.2);
      \dsplit{0}{0.2}{2}{1.5}
      \dsplit{0.7}{0.2}{1.3}{1.5}
      \dmerge{0.7}{1.5}{1.3}{2.8}
      \dmerge{0}{1.5}{2}{2.8}
      \draw [fill=white] (-0.2,1.3) rectangle (0.2,1.7);
      \draw [fill=white] (0.5,1.3) rectangle (0.9,1.7);
      \draw [fill=white] (1.1,1.3) rectangle (1.5,1.7);
      \draw [fill=white] (1.8,1.3) rectangle (2.2,1.7);
      \draw (1,3) -- (1,2.8);
      \ntxt{4}{1.5}{$=[d]!$}
      \draw (6,0) -- (6,3);
      \draw [fill=white] (5.7,1.2) rectangle (6.3,1.8);
		}
    \\ \hline
    \text{\cref{lm:SM-assoc}}&\text{\cref{lm:dumb-vs-smart-crossing}}&\text{\cref{lm:SM-assoc}}&\text{???}&\text{\cref{lm:poly-past-sm}}&\text{\cref{cor:m-lam}}\\
    \hline
    \end{array}
  \]
  It is not very hard to see that $\bfS^\cT_d$ admits a ``reduced'' presentation akin to~\cite[(5.3)--(5.6)]{song2024aaffine} under the invertibility conditions on $m_\lambda$.
  However, the relation $(2.6)$ is significantly more complicated in our context, because of the presence of algebra $F$ in the coefficients and greater freedom of choice of $\beta$.
  We expect to be able to compute an analog of $(2.6)$ using polynomial representation in \cref{app:polyrep}, and thus to obtain a presentation of $\overline{\bfS}^\cT_d$ by generators and relations.
\end{rmk}

\subsection{Relaxing conditions~\eqref{eq:C1}--\eqref{eq:C3}}\label{subs:further-generalizations}
When the elements $\alpha$, $\beta$ are not central, \cref{prop:PQWP-to-TCA} immediately fails:
\[
  Hb-\sigma(b)H  = \xi_{1,\frac{\beta b-\sigma(b)\beta}{x_1-x_2}} + \xi_{\sigma,\frac{P\sigma(b)-\sigma(b)P}{x_1-x_2}} =\rho(b) + \xi_{\sigma,\frac{P\sigma(b)-\sigma(b)P}{x_1-x_2}}\neq \rho(b).
\]
This suggests that our approach via convolution algebras is not viable in general.
Instead, one should take a version of \cref{prop:merge-is-in}, with all products \emph{taken in correct order}, as the definition of quasi-idempotents $K_\lambda$.
We expect that with enough bookkeeping of product orderings one can show that $K_\lambda^2 = m_\lambda K_\lambda$, which would imply the analog of \cref{cor:DCP}.
However, we wanted to highlight the very general \cref{thm:SW-invertible} as a result of independent interest.

\begin{conj}\label{conj:noncomm-SW}
  Assume that $B\wr \cH_d$ is a PQWP satisfying~\eqref{eq:C1} and~\eqref{eq:C3}.
  Let us write $\bfH_d = B\wr \cH_d$.
  For any composition $\lambda\vDash d$, define $K_\lambda$ by the formula~\eqref{eq:Klambda}, where the product is taken in lexicographic order.
  Consider the right $\bfH_d$-module $\bfC^\cT \coloneqq \bigoplus_{\lambda} K_\lambda \bfH_d$.
  Furthermore, let $\overline{\bfS}^\cT_d \coloneqq \bigoplus_{\lambda,\mu}\overline{\bfS}^\cT_{\lambda,\mu}$, $\overline{\bfS}^\cT\coloneqq K_\lambda \bfH_d \cap \bfH_d K_\mu$, equipped with the product 
  \[
    (xK_\mu) \cdot (K_\mu y) \coloneqq x K_\mu y, \qquad xK_\mu\in \overline{\bfS}^\cT_{\lambda,\mu},\quad  K_\mu y\in \overline{\bfS}^\cT_{\mu,\nu}.
  \]
  Then the statement of \cref{thm:strong-DCP} holds.
\end{conj}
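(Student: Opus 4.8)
Because $\alpha$ and $\beta$ are no longer central, the convolution-algebra realization of~\cref{prop:PQWP-to-TCA} breaks down, so the plan is to carry out the double centralizer entirely inside $\bfH_d = B\wr\cH_d$, taking the quasi-idempotents $K_\lambda$ (and the symbols $\alpha_w$, $\alpha^*_w$ of~\cref{subs:multinom}) to be defined by the formula~\eqref{eq:Klambda} with every product of the $\alpha_{ij}$'s in lexicographic order. I claim the whole proof reduces to two facts: \textbf{(i)} the eigenvalue identity $K_\lambda H_i = K_\lambda\alphab_i$ for every $i$ with $s_i\in\fkS_\lambda$, and \textbf{(ii)} the probing lemma, that $\theta\in\bfH_d K_\lambda$ iff $\theta H_i = \theta\alphab_i$ for all $i$ with $s_i\in\fkS_\lambda$ --- the non-central analogue of~\cref{lm:probing-with-H}.

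\emph{From \textbf{(i)}--\textbf{(ii)} to the double centralizer.} One checks directly that $\bfC^T = \bigoplus_\lambda K_\lambda\bfH_d$ is an $(\overline{\bfS}^T_d,\bfH_d)$-bimodule, with $\overline{\bfS}^T_{\lambda,\mu} = K_\lambda\bfH_d\cap\bfH_d K_\mu$ acting on $\bfC^T_\mu = K_\mu\bfH_d$ by $\theta\cdot(K_\mu h) := \theta h$ (well defined, since writing $\theta = uK_\mu$ gives $\theta h = uK_\mu h$). As $\fkS_\omega$ is trivial, $K_\omega = 1$ and $\overline{\bfS}^T_{\lambda,\omega} = K_\lambda\bfH_d = \bfC^T_\lambda$, so $\bfC^T$ is a cyclic $\overline{\bfS}^T_d$-module generated by $1\in\bfC^T_\omega$; thus any $P\in\End_{\overline{\bfS}^T_d}(\bfC^T)$ equals right multiplication by the $\omega$-component of $P(1)$, giving $\End_{\overline{\bfS}^T_d}(\bfC^T) = \bfH_d$ (faithful as $\bfC^T\supseteq\bfH_d$). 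Conversely, $\overline{\bfS}^T_d\subseteq\End_{\bfH_d}(\bfC^T)$ by the commuting actions; and given right $\bfH_d$-linear $P$, write $P = \sum_{\lambda,\mu}P_{\lambda\mu}$ with $P_{\lambda\mu}\colon K_\mu\bfH_d\to K_\lambda\bfH_d$. Each $P_{\lambda\mu}$ is determined by $\theta := P_{\lambda\mu}(K_\mu)\in K_\lambda\bfH_d$, and by \textbf{(i)} and right linearity $\theta H_i = P_{\lambda\mu}(K_\mu H_i) = P_{\lambda\mu}(K_\mu\alphab_i) = \theta\alphab_i$ for all $s_i\in\fkS_\mu$; so $\theta\in\bfH_d K_\mu$ by \textbf{(ii)}, whence $\theta\in K_\lambda\bfH_d\cap\bfH_d K_\mu = \overline{\bfS}^T_{\lambda,\mu}$ and $P_{\lambda\mu}$ coincides with the action of $\theta$. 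Since the $\overline{\bfS}^T_d$-action on $\bfC^T$ is faithful, $\End_{\bfH_d}(\bfC^T) = \overline{\bfS}^T_d$. The remaining claims of~\cref{thm:strong-DCP} are then formal, reading $S_{\omega\omega}$ as the component idempotent $1_\omega = K_\omega$.

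\emph{From a factorization to \textbf{(i)}--\textbf{(ii)}.} Both facts rest on the non-central analogue of~\eqref{eq:KlHi}: for each $s_i\in\fkS_\lambda$, with $\fkS_I = \langle s_i\rangle$,
\[
  K_\lambda = \Big(\sum\nolimits_{w\in\fkS_\lambda^I}\alpha_{w_\circ^l w}H_w\Big)(H_i + \alpha_i) = (H_i + \alpha_i)\Big(\sum\nolimits_{w\in{}^I\fkS_\lambda}H_w\alpha^*_{ww_\circ^r}\Big),
\]
where $w_\circ^l, w_\circ^r$ are the longest elements of $\fkS_\lambda^I, {}^I\fkS_\lambda$ and all $\alpha$-products are again lexicographically ordered. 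Granting this, \textbf{(i)} follows from~\eqref{eq:C1}: one has $(H_i+\alpha_i)(H_i-\alphab_i) = R_i - \alpha_i\alphab_i = 0$, using $H_i\alphab_i = (\sigma(\alphab))_iH_i = (\alpha_i+S_i)H_i$ (as $\rho(\alphab) = \partial^\beta(\alphab) = 0$ since $\alphab\in F^{\otimes 2}$) and $H_i^2 = S_iH_i + R_i$; hence $K_\lambda H_i = (\cdots)(H_i+\alpha_i)\alphab_i = K_\lambda\alphab_i$. For \textbf{(ii)}, necessity is \textbf{(i)}; sufficiency goes as in~\cref{lm:probing-with-H}, writing $F = F_1 + F_2H_i$ in PBW form relative to $\fkS^I$, deducing $F_1 = F_2\alpha_i$ from $F(H_i-\alphab_i)=0$ and the quadratic relation, so $F = F_2(H_i+\alpha_i)$, iterating over all $s_i\in\fkS_\lambda$, and re-assembling via the factorization to get $F\in\bfH_d K_\lambda$. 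Along the way one also records the analogues of~\cref{decomposeKlamb} (the factorizations $K_\lambda = \widetilde{K}_\lambda^\nu K_\nu = K_\delta K_\lambda^\delta$, hence well-definedness of $K_A$), of~\cref{prop:Mackey}, and the quasi-idempotency $K_\lambda^2 = m_\lambda K_\lambda$ with $m_\lambda$ as in~\cref{cor:m-lam} --- needed for the analogues of~\cref{cor:DCP} and for a basis of $\overline{\bfS}^T_d$, but not for the double centralizer above.

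\textbf{The main obstacle} is this factorization, and more generally the recursive structure of the $K_\lambda$'s, in the non-central setting. When $\alpha$ is not central, pushing a factor $\alpha_{jk}$ across an $H_w$ via the wreath relation $H_ib = \sigma_i(b)H_i + \rho_i(b)$ both permutes its support and creates $\partial^\beta$-correction terms; checking that the \emph{single} lexicographic ordering in the definition of $K_\lambda$ is compatible with peeling off each $s_i\in\fkS_\lambda$, and that all corrections cancel, is exactly the "bookkeeping of product orderings" anticipated in~\cref{subs:further-generalizations}. I expect this to work by induction on $\lvert\fkS_\lambda\rvert$, commuting $\alpha$-factors past one another with the weak-Frobenius identities~\eqref{eq:two-Frobs-commute} whenever their supports overlap in at most one index and invoking~\eqref{eq:C3} for divisibility of the $\partial^\beta$-terms; the delicate point is to verify the resulting identity coefficient-by-coefficient against the PBW basis of~\cref{prop:basis}.
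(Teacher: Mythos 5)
This statement is \cref{conj:noncomm-SW}: the paper offers no proof of it, and explicitly leaves it open in \cref{subs:further-generalizations}, so there is no argument of the authors to compare yours against beyond their own sketch there. Your proposal is, in essence, a careful elaboration of that sketch: you reduce the double centralizer property to (i) the right-eigenvalue identity $K_\lambda H_i = K_\lambda\alphab_i$ for $s_i\in\fkS_\lambda$ and (ii) the probing criterion, i.e.\ the non-central analogue of \cref{lm:probing-with-H}, and both of these you correctly trace back to a left/right factorization of the lexicographically ordered $K_\lambda$ through $(H_i+\alpha_i)$, the analogue of \eqref{eq:KlHi} and \cref{decomposeKlamb}. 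The formal part of your reduction is sound: the bimodule structure on $\bfC^T$ and the product on the intersections are well defined, cyclicity over the component $\omega$ gives $\End_{\overline{\bfS}^T_d}(\bfC^T)=\bfH_d$, the converse inclusion follows from (i)--(ii) exactly as you say, and your verification that $(H_i+\alpha_i)(H_i-\alphab_i)=0$ uses only \eqref{eq:C1}, the symmetry of $S$, and $\partial^\beta(F\otimes F)=0$, none of which requires \eqref{eq:C2}. Defining $\overline{\bfS}^T_{\lambda,\mu}$ as the intersection (as the conjecture does) also lets you bypass the splits-and-merges analysis that occupies most of the paper's proof of \cref{thm:strong-DCP} in the central case, which is a legitimate simplification.

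The genuine gap is the one you yourself flag as ``the main obstacle'': the factorization of the lexicographically ordered $K_\lambda$ (and with it (i), (ii), the analogues of \cref{decomposeKlamb} and \cref{prop:Mackey}, and the reassembly step ``$F=F'K_\lambda$'' after iterating the probing argument over all $s_i\in\fkS_\lambda$) is never proved; you only assert that you ``expect this to work by induction''. This is not a routine omission. It is precisely the point where centrality of $\alpha$ and $\beta$ is used in the paper's proofs of \eqref{eq:KlHi} and \cref{lm:probing-with-H}: without \eqref{eq:C2}, moving an $\alpha_{ij}$ past $H_w$ both permutes its support and produces $\rho$-correction terms, and it is not known that a single fixed ordering in \eqref{eq:Klambda} is simultaneously compatible with peeling off $(H_i+\alpha_i)$ on the appropriate side for every $s_i\in\fkS_\lambda$, nor that the corrections cancel; the interaction between the orderings peeled off for different $i$ is exactly the ``bookkeeping of product orderings'' the authors name as the reason the statement remains a conjecture. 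Until that combinatorial core is established (coefficient by coefficient against the PBW basis, as you propose, or otherwise), the proposal is a correct strategy outline rather than a proof, and it does not settle the conjecture.
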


On the other hand, the existence of solutions of~\eqref{eq:C1} seems to be crucial to get the theory going.
Indeed, suppose we want to extend the left action of $B^{\otimes d}$ on itself to the whole $B\wr \cH(d)$, that is to construct a polynomial representation.
By wreath relation, this action is completely determined by $\gamma_i \coloneqq H_i(1)$.
However, using the quadratic relation
\[ 
  0 = (H_i^2 - SH_i - R)(1) = H_i(\gamma_i) - S\gamma_i - R = (\sigma_i(\gamma_i)-S_i)\gamma_i - R_i,
\] 
and so $(-\gamma_i)$ satisfies the condition~\eqref{eq:C1}.

Similarly, when the condition~\eqref{eq:C3} fails the polynomial representation ceases to be faithful.
Indeed, let us fix $\gamma = -\alpha$, and assume that $\zeta P = 0$ for some $\zeta\in B\otimes B$.
Then 
\[
  (\zeta(x_2-x_1)H+\zeta\beta)(f) = \zeta (x_1-x_2)\sigma(f)\alpha - \zeta\beta(f - \sigma(f)) + \zeta\beta f = \zeta P \sigma(f) = 0
\] 
for all $f\in B\otimes B$.

%=============================================
\medskip
\section{Schurification \`a la Dipper--James}\label{sec:LNX-compare}
%=============================================
In this section, we translate our Schurifications into a different flavor, which is closer to~\cite{dipper1989q}.
We will define the wreath Schur algebra $\bfS^{\mathrm{DJ}}=\bfS_{n,d} \coloneqq \End_{\bfH_d}(V_n^{\otimes d})$ algebraically,
and use the Schur duality on the convolution side to prove the Schur duality on the algebraic side when $n = d$.
Finally, we prove the case $n>d$ by explicitly constructing idempotents.
The aforementioned algebras are related via the following diagram:
\begin{equation*}
  \begin{tikzcd}[row sep=small]
    \overline{\bfS}_d^\cT\ar[r, phantom, "\curvearrowright" marking]\ar[d,hook] 
    & \bigoplus_{\lambda\in\Lambda} M_\lambda \bfH^\cT_d \ar[d,hook] 
    &  \bfH^\cT_d \ar[l, phantom, "\curvearrowleft" marking]\ar[d,equal] 
    \\
    \bfS_{d,d}\ar[d,hook]\ar[r, phantom, "\curvearrowright" marking] 
    & V_d^{\otimes d}\simeq \bigoplus_{\lambda\in \Lambda_{d,d}} M^\lambda \ar[d,hook] 
    & B\wr \cH(d)\ar[l, phantom, "\curvearrowleft" marking]\ar[d,equal] 
    \\
    \bfS_{n,d}\ar[r, phantom, "\curvearrowright" marking] 
    & V_n^{\otimes d} 
    & B\wr \cH(d)\ar[l, phantom, "\curvearrowleft" marking]
  \end{tikzcd}
\end{equation*}

%=============================================
\subsection{A tensor module over affine Hecke algebras} 
%=============================================

Let us recall the action of the affine Hecke algebra of type A on a tensor space appearing in~\cite{Kashiwara1995DecompositionOF}.
Renormalizing \cref{ex:Heckes} by $\nu = q^{1/2}$, the affine Hecke algebra is a PQWP with $B = \bbk[x^{\pm 1}]$, $S = (\nu-\nu^{-1})(1\otimes 1)$, and $R=1\otimes 1$.
Consider the set $I_n = \{1,\ldots,n\}$ together with its natural total order, and let $V_n = \bigoplus_{i\in I_n} v_i B$ be a free right $B$-module with basis $\{v_i\}_{i\in I_n}$.
We further consider the right $B^{\otimes d}$-module $V_n^{\otimes d}$. It has an obvious $\bbk$-basis, given by elements
\[
  v_{\underline{i}} x_{\underline{j}},\qquad \underline{i} = (i_1,\dots,i_d)\in I_n^d,\quad \underline{j} = (j_1,\dots,j_d)\in \bbZ^d,
\]
where $v_{\underline{i}} = v_{i_1}\otimes\dots\otimes v_{i_d}$, $x_{\underline{j}} = x^{j_1}\otimes\dots\otimes x^{j_d}$.
We have a natural right $\fkS_d$-action on both $I_n^d$ and $B^{\otimes d}$ and $V_n^{\otimes d}$ by permuting factors; we will denote it by $-\cdot \sigma$.
The action of each $H_k \in B \wr \cH(d)$ on $V_n^{\otimes d}$ in~\cite[(32)]{Kashiwara1995DecompositionOF} can be rephrased as follows:
\begin{equation*}
  (v_{\underline{i}} x_{\underline{j}}) H_k \coloneqq 
\begin{cases}
  v_{\underline{i}\cdot s_k} (x_{\underline{j}}\cdot s_k)
    -(\nu-\nu^{-1})v_{\underline{i}} \partial_k (x_{\underline{j}})x_{k+1}
    & \tif {i}_{k} < {i}_{k+1};
\\
\nu v_{\underline{i}} (x_{\underline{j}}\cdot s_k)
    -(\nu-\nu^{-1})v_{\underline{i}} \partial_k (x_{\underline{j}})x_{k+1}
    &\tif {i}_{k} = {i}_{k+1};
\\
v_{\underline{i}\cdot s_k} (x_{\underline{j}}\cdot s_k)
    -(\nu-\nu^{-1}) v_{\underline{i}} \partial_k (x_{\underline{j}}x_{k+1})
    & \tif {i}_{k} > {i}_{k+1}.
\end{cases}
\end{equation*}
This suggests a uniform construction for other PQWPs.

%=============================================
\subsection{A tensor module over polynomial quantum wreath products}
%=============================================
Let $B \wr \cH(d)$ be a PQWP satisfying \eqref{eq:C1}--\eqref{eq:C2}, where $B$ is the ring of (Laurent) polynomials over a unitary algebra $F$. 
As before, consider the free right $B$-module $V_n = \bigoplus_{i\in I_n} v_i B$, and the right $B^{\otimes d}$-module $V_n^{\otimes d}$.
Given a $\bbk$-basis $\mathbb{B}$ of $B^{\otimes d}$, we have an obvious $\bbk$-basis of $V_n^{\otimes d}$:
\[
  \{ v_{\underline{i}}b : \underline{i}\in I_n^d, b\in\mathbb{B} \}.
\]
Let $\alphab \coloneqq \sigma(\alpha) + S$ as in \cref{subs:multinom}; note that $-\alphab$ satisfies the equation~\eqref{eq:C1}.
We define the right action of each Hecke-like generator $H_k \in B \wr \cH(d)$ on $V_n^{\otimes d}$ by
\begin{equation}\label{eq:mHi}
  (v_{\underline{i}}b) H_k = \begin{cases}
    v_{\underline{i}\cdot s_k}(b\cdot s_k) + v_{\underline{i}} \partial^{\beta}_k(b) &\tif i_k < i_{k+1};
    \\
     v_{\underline{i}}\alphab_k(b\cdot s_k) + v_{\underline{i}} \partial^{\beta}_k(b) &\tif  i_k = i_{k+1};
    \\
    v_{\underline{i}\cdot s_k}R_k(b\cdot s_k) +v_{\underline{i}} (\partial^\beta_k(b) + S_k(b\cdot s_k)) &\tif i_k > i_{k+1}.
    \end{cases}
\end{equation}

Note that $\partial(b)\beta = \partial^\beta(b)$, $\partial(b\beta) = \partial^\beta(b) + S\sigma(b)$ by~\eqref{eq:C2} and Leibniz rule; therefore for affine Hecke algebras, \eqref{eq:mHi} specializes to the formula from~\cite{Kashiwara1995DecompositionOF}.
The following is proved in \cref{app:comps}.

%--------------------------------------------------
\begin{prop}\label{prop:actionA}
Let $B \wr \cH(d)$ be a PQWP satisfying \eqref{eq:C1}--\eqref{eq:C2}.
The formulas~\eqref{eq:mHi} define a $B\wr\cH(d)$-action on $V_n^{\otimes d}$.
\end{prop}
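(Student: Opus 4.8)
The plan is to check directly that the $\bbk$-linear maps defined by~\eqref{eq:mHi}, together with the given right $B^{\otimes d}$-module structure on $V_n^{\otimes d}$, satisfy the defining relations of $B\wr\cH(d)$ from \cref{def:QWP}. First note that the right-hand side of~\eqref{eq:mHi} is $\bbk$-linear in $b$, and every element of $V_n^{\otimes d}$ is uniquely a finite sum $\sum_{\underline i} v_{\underline i} b_{\underline i}$ with $b_{\underline i}\in B^{\otimes d}$; hence each $H_k$ is a well-defined $\bbk$-linear endomorphism, and only the braid, quadratic, and wreath relations remain to be verified.

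All of these relations are local. The braid relation $H_kH_{k+1}H_k=H_{k+1}H_kH_{k+1}$ involves only the entries $i_k,i_{k+1},i_{k+2}$ of $\underline i$ and the tensor factors $k,k+1,k+2$ of $b$; the quadratic and wreath relations involve only $i_k,i_{k+1}$ and factors $k,k+1$; and $H_kH_\ell=H_\ell H_k$ for $|k-\ell|\ge 2$ is immediate because the two operators act through disjoint tensor slots and neither changes the order type of the pair of indices relevant to the other. So it suffices to treat $d=2$ (wreath and quadratic relations) and $d=3$ (the length-$3$ braid relation). In each of these one splits according to the order type of the relevant indices --- for $d=2$ the three cases $i_1<i_2$, $i_1=i_2$, $i_1>i_2$, and for $d=3$ the finitely many order types of $(i_1,i_2,i_3)$ --- then expands both sides of the relation by iterating~\eqref{eq:mHi}, groups the resulting terms by the permuted index vector $v_{\underline i\cdot w}$ they carry, and matches coefficients in $B^{\otimes d}$.

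The ingredients for the coefficient computations are already available: the twisted Leibniz rule $\partial^\beta_k(ab)=\sigma_k(a)\partial^\beta_k(b)+\partial^\beta_k(a)b$ of \cref{lem:partialbeta}(c), the vanishing $\partial^\beta_k|_{F^{\otimes d}}=0$ and the antisymmetry $\partial^\beta_k\sigma_k=-\partial^\beta_k$ on polynomials of \cref{lem:partialbeta}(a)--(b), the identities $(\partial^\beta_k)^2=S_k\partial^\beta_k$ and $\partial^\beta_k(\sigma_k(b))+\sigma_k(\partial^\beta_k(b))=S_k\sigma_k(b)-S_kb$ (both consequences of \cref{lem:partialbeta} and \cref{cor:PQWPid}), the higher relations \eqref{def:br1}--\eqref{def:br5} which hold by \cref{prop:basis}, and \cref{lem:side-deriving-beta}; together with the standing hypotheses $\sigma(S)=S$, centrality of $R$ (\eqref{eq:A1}) and of $\alpha,\beta,\alphab$ (\eqref{eq:C2}), the fact that $-\alphab$ solves \eqref{eq:C1}, and the factorization constraint $\Delta^{00}_1\Delta^{11}_2=\Delta^{01}_1\Delta^{10}_2$ of \eqref{eq:A2}. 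A useful organizing remark is that on the ``diagonal'' block where all relevant indices coincide, $H_k$ acts by $b\mapsto \alphab_k\sigma_k(b)+\rho_k(b)$, a right-module avatar of the polynomial representation of $B\wr\cH(d)$; consistency of~\eqref{eq:mHi} there is exactly the centrality of $\alphab$ combined with the relations already verified, and the remaining blocks interleave this with the permutation of the index vector and the coefficients $R_k,S_k$, in the same pattern as the proof of \cref{prop:PQWP-to-TCA}(a).

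The main obstacle is the braid relation in the cases where $i_1,i_2,i_3$ are not all equal --- in particular the generic case of three distinct values and the three ``two equal, one different'' configurations. There the index vector is genuinely permuted by both sides of the relation, the number of terms is largest, and reconciling them requires the full force of \eqref{def:br1}--\eqref{def:br5} and, exactly as in the braid-relation computation of \cref{prop:PQWP-to-TCA}(a), the constraint \eqref{eq:A2}, which is precisely what makes the mismatched numerators cancel. These computations are lengthy but mechanical, so I would present only the most involved case in detail and indicate that the others are entirely analogous.
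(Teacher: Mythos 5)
Your overall strategy coincides with the paper's: verify the defining relations of \cref{def:QWP} directly, reduce the wreath and quadratic relations to $d=2$ and the braid relation to $d=3$, split into cases by the order type of the relevant indices, and close the computations using \cref{lem:partialbeta}, \eqref{def:qu1}, \eqref{def:br1}--\eqref{def:br5} and the centrality hypotheses. Where you diverge is in how the $d=3$ braid case is organized, and this is exactly where the paper saves most of the labor. The paper first uses the wreath relations to write $(v_{\underline{i}}P)H_1H_2H_1$ as $v_{\underline{i}}H_1H_2H_1\,(s_1s_2s_1)(P)$ plus lower-order terms, and observes that the lower terms on the two sides agree on the nose thanks to \eqref{def:br1}--\eqref{def:br3}; this reduces the braid relation for arbitrary coefficients $b$ to the relation on the bare vectors $v_{\underline{i}}$, where \eqref{eq:mHi} collapses to the much simpler \eqref{eq:mHiFundamental}. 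It then invokes a diamond-lemma argument (Elias) to shrink the thirteen order types of $(i_1,i_2,i_3)$ to a minimal set of ambiguities, namely $\underline{i}\in\{(1,3,2),(2,3,1)\}$ together with the degenerate cases $1\leq i_k\leq 2$. Your plan --- expand both sides with general $b$ for every order type and match coefficients --- is correct in principle, but considerably heavier; the two reductions above are what make the case analysis tractable.

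Two caveats on your write-up plan. First, the claim that the remaining braid cases are ``entirely analogous'' to the most involved one is not accurate: in the paper the case $(1,3,2)$ is equivalent to \eqref{def:br4}--\eqref{def:br5}, the case $(2,3,1)$ to \eqref{eq:H12121} (deduced from \eqref{def:qu1}, \eqref{def:br4} and centrality of $R$), the all-equal case needs centrality of $\alphab$ from \eqref{eq:C2}, and the case $(2,2,1)$ needs the weak Frobenius property of $S$ together with \eqref{def:wr2}; these are genuinely different verifications, so a complete proof must either treat them separately or perform a reduction as the paper does. Second, \eqref{eq:A2} does not enter this proof directly: once \eqref{def:br1}--\eqref{def:br5} are granted via \cref{prop:basis} (whose proof, through \cref{lem:side-deriving-beta}, is where \eqref{eq:A2} is actually consumed), the braid check needs only the identities above, so the analogy with the numerator cancellations in \cref{prop:PQWP-to-TCA}(a) is only relevant if you insist on expanding everything in the $\Delta^{ij}$'s.
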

%--------------------------------------------------

\begin{defn}
We call the centralizing algebra 
\begin{equation}
\bfS^{\mathrm{DJ}}= \bfS_{n,d} \coloneqq \End_{B\wr \cH(d)}(V_n^{\otimes d})
\end{equation}
 the {\em wreath Schur} algebra.
\end{defn}
	\nmcl[SDJ]{$\bfS^{\mathrm{DJ}}$}{Wreath Schur algebra.}%	
%--------------------------------------------------
%=============================================
\subsection{An analog of permutation modules}
%=============================================
Suppose that $M$ is a $B^{\otimes d}$-module, $N$ is a $\bbk$-vector space. The conditions on $Q$ such that $N \otimes M$ has a module structure over an arbitrary quantum wreath product $B \wr \cH(d)$ have been developed~\cite{laiQuantumWreathProducts2025}.
Here, we provide a special case of the theory therein.

Let $B\wr\cH(d)$ be a PQWP such that \eqref{eq:C1}--\eqref{eq:C2} holds.
Recall from \eqref{eq:eigenv} that $(H_i +\alpha_i)$ is an eigenvector with respect to the right multiplication by $H_i$, and the corresponding eigenvalue is $\alphab_i = S_i + \sigma_i(\alpha_i) \in B^{\otimes d}$.
We want to construct an analog $N^\lambda$ of the permutation module $x_\lambda \cH_q(\fkS_d)$ using eigenvectors of this form.

For $A \equiv (\lambda, g, \mu)$, we further write $\delta \coloneqq \delta^c(A)$, $G(A) \coloneqq {}^{\delta} \fkS_\mu$, and recall the elements
\[
  K_\lambda = \sum\nolimits_{w\in \fkS_\lambda} \alpha_{w} H_{ww_\circ^\lambda}, 
  \quad
  K_\mu^\delta = \sum\nolimits_{w\in G(A)} H_w \alpha_{w w_\circ^{\delta} w_\circ^\mu },
  \quad
  K_A = K_\lambda H_g K_\mu^\delta
\]
defined by~\eqref{eq:Klml}, \eqref{def:KK} and \cref{decomposeKlamb}(1) respectively.
In order to make the notation closer to Dipper--James construction, we will write
\[
  y_\lambda\coloneqq K_\lambda,\quad y_\mu^\delta \coloneqq K_\mu^\delta,\quad y_A \coloneqq K_A
\]
until the end of this section.
	\nmcl[ylambda]{$y_\lambda$}{The symmetrizer in the polynomial QWP with respect to $\fkS_\lambda$.}%	
For any $\lambda \in \Lambda_{n,d}$, 
we define a subspace 
\[
  N^\lambda \coloneqq \textup{Span}_\bbk\{y_\lambda H_g \in B \wr \cH(d) ~|~g\in{}^\lambda\fkS \},
\]
on which the right multiplication of $B \wr \cH(d)$ induces a structure map 
\begin{equation}\label{eq:Nstrucmap}
\tau^\lambda: N^\lambda  \otimes \bbk\fkS_d \to N^\lambda \otimes F^{\otimes d}, 
\quad
y_\lambda H_\eta\otimes w \mapsto \sum\nolimits_{g\in{}^\lambda\fkS} (y_\lambda H_g)\otimes b^g_{\eta, w},
\end{equation}
where $b^g_{\eta,w}$ are the coefficients appearing in $y_\lambda H_{\eta} H_{w} = \sum_{g\in{}^\lambda\fkS} y_\lambda H_g b^g_{\eta,w}$.
\begin{expl}\label{ex:Nlambda}
Let $d=2$.
Then, $\fkS_{(1,1)} = 1 = {}^{(2)}\fkS$ and ${}^{(1,1)}\fkS = \fkS_2 = \fkS_{(2)}$.
Hence, $y_{(1,1)} = 1$, $y_{(2)} = H_1 - \alphab$.
Note that right multiplication by $H_1$ does not preserve $N^{(1,1)} = \textup{Span}_\bbk\{y_{(1,1)}, y_{(1,1)}H_1 \}$.
More precisely, the structure map~\eqref{eq:Nstrucmap} for $\lambda = (1,1)$ is given explicitly by
\[
y_{(1,1)}\otimes s_1 \mapsto y_{(1,1)}H_1 \otimes 1,
\quad
y_{(1,1)}H_1\otimes s_1 \mapsto y_{(1,1)}H_1 \otimes S_1 + y_{(1,1)} \otimes R_1.
\]
Similarly, the structure map for $N^{(2)} = \bbk y_{(2)}$ is given by $v\otimes s_1 \mapsto  v  \otimes \alphab$.
\end{expl}

%=============================================
%\subsection{A Decomposition of the Tensor Space}
%=============================================
For any $\ui \in I_n^d$, denote by $\ui^+ \in I_n^d$ the non-decreasing rearrangement of $\ui$,
and let $w(\ui) \in {}^\lambda\fkS$ be such that $\ui^+ \cdot w(\ui) = \ui$.
Then, $\ui^+$ is of the form $(1^{\lambda_1}, \dots, n^{\lambda_n})$ for some  $\lambda = \lambda(\ui) \in \Lambda_{n,d}$.
Write $v_\lambda^+ \coloneqq v_{\ui^+}$.

Note that $V_n$ decomposes into a direct sum $U_1\oplus \dots\oplus U_n$, where $U_i \coloneqq v_i B$. %= \bigoplus_{j\in \bbZ} F m_{i+nj} \subseteq V_n$.
For $\lambda \in \Lambda_{n,d}$, let $U^\lambda \coloneqq v_\lambda^+ B^{\otimes d} = U_{1}^{\otimes \lambda_1} \otimes \dots \otimes U_{n}^{\otimes \lambda_n}$ be a free right $B^{\otimes d}$-module by factorwise multiplication.
Define a vector space $M^\lambda \coloneqq N^\lambda \otimes U^\lambda$.
	\nmcl[Muplambda]{$M^\lambda$}{The analog of the permutation module for PQWP.}%	
It inherits a right $B^{\otimes d}$-action from $U^\lambda$.
Furthermore, using the structure map~\eqref{eq:Nstrucmap}, define a right action of $H_k$, $1\leq k\leq d-1$ on $M^\lambda$ by
\[
  (y_\lambda H_\eta\otimes P)H_k = y_\lambda H_{\eta}\otimes \partial^\beta_k(P) + \sum\nolimits_{g\in{}^\lambda\fkS} (y_\lambda H_g)\otimes b^g_{\eta, s_k}(P\cdot s_k).
\]
We have a vector space isomorphism
\begin{equation}\label{eq:VU}
\bigoplus\nolimits_{\lambda\in \Lambda_{n,d}} M^\lambda\simeq V_n^{\otimes d};
\qquad
y_\lambda H_g \otimes v_\lambda^+P \mapsto v_{\ui^+ \cdot g} P, \quad
y_\lambda H_{g(\ui)} \otimes v_{\ui^+} P\mapsfrom v_{\ui} P.
\end{equation}
%-------------------------------------------------------------
\begin{prop}\label{cor:VU}
Let $B \wr \cH(d)$ be a PQWP satisfying \eqref{eq:C1}--\eqref{eq:C2}.
\begin{enumerate}[(a)]
\item The map~\eqref{eq:VU} is compatible with the right action of $H_k$, $1\leq k\leq d-1$. In particular, $M^\lambda$ is a $B\wr \cH(d)$-submodule of $V_n^{\otimes d}$;
\item Let $\lambda'$ be the strict composition obtained from $\lambda \in \Lambda_{n,d}$ by removing zeroes. Then $M^\lambda \equiv \tC^\cT_{\lambda'}$, where $\tC^\cT_{\lambda'} = M_{\lambda'}\bfH_d^\cT$ is the direct factor of the bimodule in \cref{cor:DCP}, via 
\[
  y_\lambda H_g \otimes v_\lambda^+P \mapsto M_{\lambda'}H_g P,\qquad P \in B^{\otimes d}, g \in {}^\lambda\fkS.
\]
\end{enumerate}
\end{prop}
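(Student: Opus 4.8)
The plan is to deduce (a) from \cref{prop:actionA} together with a compatibility check that reduces to ``fundamental'' tensors, and to prove (b) directly by exhibiting the map $\Psi$ and verifying it is an isomorphism of right $B\wr\cH(d)$-modules.

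For (a): the $H_k$-action on $\bigoplus_{\lambda}M^\lambda$ and the action \eqref{eq:mHi} on $V_n^{\otimes d}$ both have the shape ``(unchanged basis vector)$\,\otimes\,\partial^\beta_k(\text{polynomial part})$ $+$ (basis-changing term)$\,\otimes\,(\text{polynomial part}\cdot s_k)$'', and \eqref{eq:VU} is right $B^{\otimes d}$-linear; hence it suffices to check compatibility on the fundamental vectors $v_{\uj}$ with trivial polynomial part. Writing $\uj=\ui^+\cdot\eta$ with $\lambda=\lambda(\uj)$ and $\eta=w(\uj)\in{}^\lambda\fkS$, this amounts to showing that expanding $y_\lambda H_\eta H_k=\sum_{g\in{}^\lambda\fkS}y_\lambda H_g\,b^g_{\eta,s_k}$ in $B\wr\cH(d)$ and then replacing each $y_\lambda H_g$ by $v_{\ui^+\cdot g}$ reproduces $v_{\uj}H_k$ as given by \eqref{eq:mHiFundamental}. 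I would run this as a case analysis on $(j_k,j_{k+1})$, the input being the clean dichotomy coming from \cref{lem:doublecoset}: the positions $\eta(k),\eta(k+1)$ lie in the same block of $\lambda$ precisely when $j_k=j_{k+1}$, in which case $\eta(k+1)=\eta(k)+1$ (blocks are intervals and $\eta^{-1}$ is increasing on them), $\ell(\eta s_k)=\ell(\eta)+1$, and $\eta s_k=s_{\eta(k)}\eta\notin{}^\lambda\fkS$; otherwise $\eta s_k\in{}^\lambda\fkS$. Combining this with $y_\lambda H_i=y_\lambda\alphab_i$ for $s_i\in\fkS_\lambda$ (which is \eqref{eq:KlHi}), with $H_ib=\sigma_i(b)H_i$ for $b\in F^{\otimes d}$ (since $\rho=\partial^\beta$ annihilates $F^{\otimes d}$ by \cref{lem:partialbeta}), and with the quadratic relation, the three cases yield $y_\lambda H_\eta H_k$ equal to $y_\lambda H_{\eta s_k}$, to $y_\lambda H_\eta\alphab_k$, and to $y_\lambda H_\eta S_k+y_\lambda H_{\eta s_k}R_k$ respectively, which are exactly the three branches of \eqref{eq:mHiFundamental} read off through \eqref{eq:VU}. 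Once \eqref{eq:VU} intertwines each $H_k$, the formula defining the $H_k$-action on $M^\lambda$ visibly stays inside $M^\lambda$, so $M^\lambda\hookrightarrow V_n^{\otimes d}$ is a submodule.

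For (b): I would take $\Psi\colon M^\lambda\to\tC^T_{\lambda'}=M_{\lambda'}\bfH^T_d$ defined on the $\bbk$-basis $\{y_\lambda H_g\otimes v_\lambda^+b : g\in{}^\lambda\fkS,\ b\in\mathbb{B}\}$ by $\Psi(y_\lambda H_g\otimes v_\lambda^+P)=M_{\lambda'}H_gP$. Compatibility with the $B^{\otimes d}$-action is immediate. For compatibility with $H_k$ I would use $PH_k=H_k(P\cdot s_k)+\partial^\beta_k(P)$ (wreath relation, with $\sigma_k$ the place permutation and $\rho_k=\partial^\beta_k$) together with the identity $M_{\lambda'}H_\eta H_k=\sum_gM_{\lambda'}H_g\,b^g_{\eta,s_k}$; this last identity follows from the defining relation $y_\lambda H_\eta H_k=\sum_gy_\lambda H_g\,b^g_{\eta,s_k}$ by writing $y_\lambda=K_{\lambda'}=S_{\lambda'}*M_{\lambda'}$ and cancelling $S_{\lambda'}$ on the left, which is legitimate because $(S_{\lambda'}*C)(x,x')=C(p_{\lambda'}(x),x')$ vanishes only for $C=0$ (note this avoids any invertibility assumption on $m_\lambda$). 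Surjectivity of $\Psi$ comes from \cref{lem:SM-eat-Hw}: absorbing the $\fkS_{\lambda'}$-part of a Weyl group element into $M_{\lambda'}$ and pushing polynomials past $H_g$, one sees $M_{\lambda'}\bfH^T_d$ is spanned by the $M_{\lambda'}H_gP$ with $g\in{}^\lambda\fkS$. Injectivity follows once $\{M_{\lambda'}H_gb : g\in{}^\lambda\fkS,\ b\in\mathbb{B}\}$ is known to be a $\bbk$-basis of $\tC^T_{\lambda'}$, which is \cref{prop:basis-div} with $\mu=\omega$ (where $S_\omega$ is the unit and $M_\nu TS_\nu$ becomes $B^{\otimes d}$), together with the observation that moving a polynomial from the left of $H_g$ to its right is unitriangular in $g$, hence preserves the basis property.

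The step I expect to be the main obstacle is the fundamental-vector case analysis in (a): one must carry the coefficients $b^g_{\eta,s_k}$ through the structure map $\tau^\lambda$ and the identification \eqref{eq:VU} and check they agree on the nose with \eqref{eq:mHiFundamental}. What makes this go through cleanly is precisely the equivalence $\eta s_k\in{}^\lambda\fkS\Leftrightarrow j_k\ne j_{k+1}$ and the vanishing $\partial^\beta|_{F^{\otimes d}}=0$, which together ensure that no unexpected lower-order terms enter in any of the three cases.
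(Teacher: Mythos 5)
Your proposal is correct and takes essentially the same route as the paper: part (a) is exactly the rewriting of \eqref{eq:mHi}--\eqref{eq:mHiFundamental} under the identification \eqref{eq:VU} (your three-case analysis on $(j_k,j_{k+1})$ spells out what the paper's ``easy to see'' leaves implicit), and part (b) is the paper's direct comparison, your ``cancelling $S_{\lambda'}$ on the left'' being precisely the injectivity of $\psi^L_{\lambda'}$ from \cref{lm:inclusions-projections} that the paper cites via the isomorphism $M_{\lambda'}\bfH^T_d\to K_{\lambda'}\bfH^T_d$. The additional bijectivity checks you supply (surjectivity via \cref{lem:SM-eat-Hw}, linear independence via \cref{prop:basis-div} with $\mu=\omega$) are consistent with, and merely flesh out, the paper's identifications.
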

%-------------------------------------------------------------
\begin{proof}
It is easy to see that the action~\eqref{eq:VU} is obtained by rewriting \eqref{eq:mHi}--\eqref{eq:mHiFundamental} under the isomorphism~\eqref{eq:VU}, hence the first claim.
The second claim follows by direct comparison, recalling that the map $\psi^L_\lambda$ from \cref{lm:inclusions-projections} induces an isomorphism of right modules $M_{\lambda'}\bfH_d^\cT\to K_{\lambda'}\bfH_d^\cT$.
\end{proof}
%-------------------------------------------------------------

%=============================================
\subsection{Schur duality and a basis of wreath Schur}
%=============================================
Let us describe a basis of the wreath Schur algebra $\bfS_{n,d}$ in terms of homomorphisms between permutation modules $M^\lambda$.
For $A \equiv (\lambda, g, \mu)$ and $P \in B^{\otimes d}$, let $\theta_{A,P}\in \Hom_\bbk(M^\mu, M^\lambda)$ be the following right $B \wr \cH(d)$-linear map:
\begin{equation}\label{def:thetaAP}
\theta_{A,P}: M^\mu  \to M^\lambda ,
\quad
(y_\mu \otimes v_\mu^+) \mapsto  \sum\nolimits_{w\in {}^\lambda\fkS} (y_\lambda H_{w})\otimes (v_\lambda^+ b_w),
%(y_\mu \otimes v_\mu^+)\cdot h \mapsto \left( \sum\nolimits_{w\in \fkS_\mu, w'\in {}^\lambda\fkS} y_\lambda H_{w'}\otimes v_\lambda^+ b_{g,w}^{w'}P_w \right)\cdot h,
\end{equation}
where $b_w = b_w(P,A)$'s are obtained from $P H_g y_\mu^{\delta} = \sum\nolimits_{w\in {}^\lambda\fkS} b_w H_w$.
In other words, $\theta_{A,P}$ is determined by $\theta_{A,P}: y_\mu \mapsto y_\lambda P H_g y_\mu^\delta$ under the following identification:
\[
N^\lambda (B\wr \cH(d)) \equiv M^\lambda, 
\quad
\sum\nolimits_{w \in {}^\lambda \fkS} y_\lambda b_w H_w \mapsto \sum\nolimits_{w\in {}^\lambda\fkS}(y_\lambda H_{w})\otimes (v_\lambda^+ b_w).
\]
The statement below follows immediately from \cref{prop:Mackey} in view of identifications in \cref{cor:VU}.
\begin{prop}\label{prop:Thetabasis}
For each $\nu \vDash d$, fix a $\bbk$-basis $\overline{\mathbb{B}}_\nu$ of $(B^{\otimes d})^{\fkS_\nu}$. Then, $\bfS_{n,d}$ has the following basis:
\[
\pushQED{\qed} 
\{\theta_{A,P} ~|~ A\in \Theta_{n,d}, P\in \overline{\mathbb{B}}_{\delta^c(A)}\}.\qedhere
\popQED
\]
\end{prop}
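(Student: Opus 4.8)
The plan is to elaborate the one-line reduction indicated above: once the identifications of \cref{cor:VU} are in place, the asserted basis of $\bfS_{n,d}$ is precisely the laurel Schur algebra basis produced by \cref{prop:Mackey}, read through those identifications.

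\emph{Step 1: passage to the bimodule $\tC^T$.} By \cref{cor:VU}, $V_n^{\otimes d}=\bigoplus_{\lambda\in\Lambda_{n,d}}M^\lambda$ is a decomposition of right $B\wr\cH(d)$-modules and $M^\lambda\cong\tC^T_{\lambda'}$ as right $B\wr\cH(d)$-modules, where $\lambda'$ is $\lambda$ with its zero parts removed. Hence
\[
  \bfS_{n,d}=\End_{B\wr\cH(d)}(V_n^{\otimes d})=\bigoplus_{\lambda,\mu\in\Lambda_{n,d}}\Hom_{B\wr\cH(d)}(\tC^T_{\mu'},\tC^T_{\lambda'}).
\]
By the double centralizer property \cref{thm:strong-DCP}, $\End_{B\wr\cH(d)}(\tC^T)=\overline{\bfS}^T_d$; truncating by the idempotents $1_{\lambda'}$ identifies $\Hom_{B\wr\cH(d)}(\tC^T_{\mu'},\tC^T_{\lambda'})$ with the block $\overline{\bfS}^T_{\lambda',\mu'}$, and through the embedding $\psi$ of \cref{sec:SSWD} with the subspace $\overline{\tA}_{\lambda'\mu'}=K_{\lambda'}\bfH_d^T\cap\bfH_d^T K_{\mu'}$ of $B\wr\cH(d)$; concretely, such a homomorphism is determined by the image of the cyclic generator $M_{\mu'}$ of $\tC^T_{\mu'}$, and corresponds to left multiplication by the matching element of $\overline{\tA}_{\lambda'\mu'}$.

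\emph{Step 2: re-indexing and the laurel basis.} Deleting zero parts does not change Young subgroups, so $\fkS_\lambda=\fkS_{\lambda'}$, $\fkS_\mu=\fkS_{\mu'}$; thus ${}^\lambda\fkS^\mu={}^{\lambda'}\fkS^{\mu'}$ and $\delta^c(\lambda,g,\mu)=\delta^c(\lambda',g,\mu')$. Recall from \cref{lem:doublecoset} and its surrounding discussion the bijection $A\leftrightarrow(\lambda,g,\mu)$, where $\lambda,\mu\in\Lambda_{n,d}$ are the column and row sum vectors of $A$ and $g\in{}^\lambda\fkS^\mu$ is the distinguished double coset representative. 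For each $(\lambda,\mu)$, \cref{prop:Mackey}(a) --- in the form $\{\widetilde{K}_{\lambda'}^{\nu(g)}H_g\,P\,K_{\mu'}:g\in{}^\lambda\fkS^\mu,\ P\in\overline{\mathbb{B}}_{\delta^c(A)}\}$, with $\nu(g)=\delta^r(\lambda,g,\mu)$ and $P$ running over a $\bbk$-basis of $T^{\fkS_{\delta^c(A)}}$ --- gives a $\bbk$-basis of $\overline{\tA}_{\lambda'\mu'}$. Summing over $(\lambda,\mu)\in\Lambda_{n,d}^2$ and applying the bijection yields a $\bbk$-basis of $\bfS_{n,d}$ indexed exactly by $\{(A,P):A\in\Theta_{n,d},\ P\in\overline{\mathbb{B}}_{\delta^c(A)}\}$.

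\emph{Step 3: matching with $\theta_{A,P}$, and the main obstacle.} It remains to identify the basis element $\widetilde{K}_{\lambda'}^{\nu(g)}H_g P K_{\mu'}$ with $\theta_{A,P}$ under the identification of Step 1, which I would carry out by evaluating on the generator. Composing \cref{cor:VU}(b) with the right-module isomorphism $M_{\mu'}\bfH_d^T\cong K_{\mu'}\bfH_d^T$, $M_{\mu'}f\mapsto S_{\mu'}M_{\mu'}f=K_{\mu'}f$, the generator $y_\mu\otimes v_\mu^+$ is sent to $K_{\mu'}=y_\mu$; unwinding \eqref{def:thetaAP} through the structure relation $K_{\lambda'}H_gH_w=\sum_{w'}K_{\lambda'}H_{w'}b^{w'}_{g,w}$ and the defining expansion $P\,y_\mu^{\delta}=\sum_{w\in\fkS_\mu}H_wP_w$, the image $\theta_{A,P}(y_\mu\otimes v_\mu^+)$ becomes left multiplication by $K_{\lambda'}H_g P K_\mu^{\delta}$. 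One then invokes the factorisations of \cref{decomposeKlamb}, together with $\fkS_{\delta^r(A)}=g\,\fkS_{\delta^c(A)}\,g^{-1}$ and the commutation of a $\fkS_{\delta^c(A)}$-symmetric $P$ past $K_{\delta^c(A)}$ (which uses \eqref{eq:C2} to annihilate the relevant $\rho_i(P)$), to rewrite $K_{\lambda'}H_g P K_\mu^{\delta}=\widetilde{K}_{\lambda'}^{\nu(g)}H_g P K_{\mu'}$. The main obstacle is entirely organisational: keeping straight which side the quasi-idempotents $K_\lambda$, $\widetilde{K}_\lambda^{\nu}$, $K_\mu^{\delta}$ sit on, matching the row-$\delta^r$ and column-$\delta^c$ incarnations of the laurel basis of \cref{prop:Mackey}(a), and reconciling the permutation-module structure constants $b^{w'}_{g,w},P_w$ with the convolution-algebra splits and merges. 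Nothing conceptually new beyond transporting \cref{prop:Mackey} through \cref{cor:VU} is required, which is why the result can be recorded as immediate.
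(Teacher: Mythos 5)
Your proposal is correct and follows essentially the same route as the paper: it transports the basis of $K_\lambda\bfH_d^T\cap\bfH_d^T K_\mu$ from \cref{prop:Mackey}(a) through the identifications $M^\lambda\simeq \tC^T_{\lambda'}$ of \cref{cor:VU} (using the evaluation-at-$M_{\mu'}$ description of Hom spaces from the proof of \cref{thm:strong-DCP}), and matches $\theta_{A,P}$ with $K_{\lambda'}H_gPK_{\mu'}^{\delta}=\widetilde{K}_{\lambda'}^{\nu(g)}H_gPK_{\mu'}$ exactly as the paper's ``follows immediately'' intends. The extra bookkeeping you supply in Steps 1--3 (removal of zero parts, commuting the $\fkS_{\delta^c(A)}$-symmetric $P$ past $K_{\delta^c(A)}$ via \eqref{eq:C2}, and \cref{decomposeKlamb}) is accurate and fills in the same details the authors leave implicit.
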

Note that when $P = 1$, we recover the Dipper--James elements $\theta_{A,1} = \theta_A: x_\mu \mapsto x_A$ in \eqref{eq:DJelt}.
%In general, the situation is more complicated.
%-----------------------------------------
\begin{expl} \label{ex:yA}
Suppose that $A\coloneqq\left(\begin{smallmatrix} 1&1 \\  2&0 \end{smallmatrix}\right)$ with $\lambda = (2,2)$, $\mu = (3,1)$, $\delta \coloneqq \delta^c(A) = (1,2,1)$, and $g = s_2s_3$, and $G(A) = {}^{\delta^c}\fkS_\mu$.
Then,
\[
\fkS_\lambda = \langle s_1, s_3\rangle, 
\quad 
G(A) = \{ g\in \langle s_1, s_2\rangle ~|~ s_2g > g\} = \{e, s_1, s_1s_2\},
\]
where the longest element in $G(A)$ is $s_1s_2 = w_\circ^{\delta}w_\circ^\mu$.
Pick $w \coloneqq \kappa^{-1}(s_1, s_1) = s_1 g s_1$. Then, $w_\circ^A w = s_1 s_2$, 
$H_g = H_2H_3$, and
$y_\mu^\delta = (H_1H_2 + H_1 \alpha_2 + \alpha_{s_1 s_2})$.
Let $f\in F$, $P \coloneqq f_1x_2x_3 = f\otimes (x_1x_2) \otimes 1$; then 
\begin{align*}
  \theta_{A,P}(y_\mu \otimes v_\mu^+) & = y_\lambda H_2H_3H_1H_2 \otimes v_\lambda^+ x_1x_2f_3 + y_\lambda H_2H_3H_1 \otimes v_\lambda^+ x_1f_2(x_3\alpha_2 - \beta_{23}) \\
  &+ y_\lambda H_2H_3 \otimes v_\lambda^+ f_1(\beta_{13}\beta_{2} + x_2x_3\alpha_{s_1s_2} - \beta_{1}\alpha_2 x_3 - x_2\alphab_2 \beta_{13} - \rho_2(\beta_1)x_3).
\end{align*}
\end{expl}

%--------------------------------------------------
\begin{thm}\label{thm:SW2}
Assume that $n\geq d$.
Then, $B \wr \cH(d) \cong \End_{\bfS_{n,d}}(V_n^{\otimes d})$, and so the Schur duality holds between $\bfS_{n,d}$ and  $B \wr \cH(d)$.
\end{thm}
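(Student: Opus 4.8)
The strategy is to reduce the general case $n \geq d$ to the already-available case $n = d$ by means of an idempotent truncation. The starting point is \cref{thm:strong-DCP}, which gives the double centralizer property for the pair $(B\wr\cH(d), \overline{\bfS}^T_d)$ acting on $\tC^T = \bigoplus_\lambda M_\lambda \bfH^T_d$. Combining this with \cref{cor:VU}(b), which identifies each $M^\lambda$ with the summand $\tC^T_{\lambda'}$ of $\tC^T$, we obtain that $\bigoplus_{\lambda\in\Lambda_{d,d}} M^\lambda$ carries commuting faithful actions of $B\wr\cH(d)$ and $\overline{\bfS}^T_d$ with full mutual centralizers. Via the isomorphism~\eqref{eq:VU}, this says precisely that $V_d^{\otimes d}$ realizes the double centralizer property, so $\End_{\bfS_{d,d}}(V_d^{\otimes d}) = B\wr\cH(d)$; this settles the case $n = d$.

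For $n > d$, the plan is as follows. Observe that when $n \geq d$, every composition $\lambda \in \Lambda_{n,d}$ with at most $d$ nonzero parts has the same $M^\lambda$ (up to canonical isomorphism) as its rearrangement with zeros spread out, and that all distinct modules $M^\lambda$ appearing as summands of $V_n^{\otimes d}$ already appear as summands of $V_d^{\otimes d}$. Concretely, $V_d^{\otimes d} \cong \bigoplus_{\lambda\in\Lambda_{d,d}} M^\lambda$ contains, as a direct summand, a copy of every $M^{\mu}$ for $\mu\in\Lambda_{n,d}$: indeed $M^\mu$ depends only on the strict composition $\mu'$ obtained by deleting zeros, and since $n \geq d$ every strict composition of $d$ of length $\leq d$ occurs among the $\mu'$ with $\mu\in\Lambda_{d,d}$. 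Hence there is an idempotent $e \in \bfS_{n,d}$ (a sum of identity maps on the appropriate $M^\mu$-summands) such that $e V_n^{\otimes d}$, as a $B\wr\cH(d)$-module, is isomorphic to $V_d^{\otimes d}$, and moreover $V_n^{\otimes d}$ and $V_d^{\otimes d}$ generate the same category of $B\wr\cH(d)$-modules up to the relevant Morita-type data; in particular $\End_{B\wr\cH(d)}(eV_n^{\otimes d}) = e\bfS_{n,d}e \cong \bfS_{d,d}$.

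The key point is then that $eV_n^{\otimes d}$ is a \emph{full} idempotent truncation in the sense needed: the natural map $V_n^{\otimes d} \to \Hom_{\bfS_{d,d}}(eV_n^{\otimes d}, V_n^{\otimes d})$ — or rather, the statement that $\bfS_{n,d}V_n^{\otimes d} = \bfS_{n,d}(eV_n^{\otimes d})$ as $B\wr\cH(d)$-modules — holds because every basis vector $v_{\underline i}b$ of $V_n^{\otimes d}$ with $\underline i \in I_n^d$ uses at most $d$ distinct values from $I_n$, hence lies in the image of a suitable split map from $eV_n^{\otimes d}$; equivalently, $e$ acts as identity on a set of generators. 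Granting this, a standard argument (cf.\ the idempotent-truncation lemma underlying recalibrations of $q$-Schur algebras, e.g.\ as in~\cite{dipper1989q}) gives
\[
  \End_{\bfS_{n,d}}(V_n^{\otimes d}) \;=\; \End_{e\bfS_{n,d}e}(eV_n^{\otimes d}) \;=\; \End_{\bfS_{d,d}}(V_d^{\otimes d}) \;=\; B\wr\cH(d),
\]
where the first equality uses fullness of $e$ on $V_n^{\otimes d}$, the second uses $e\bfS_{n,d}e\cong\bfS_{d,d}$ and $eV_n^{\otimes d}\cong V_d^{\otimes d}$, and the third is the case $n=d$ established above. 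Together with $\bfS_{n,d} = \End_{B\wr\cH(d)}(V_n^{\otimes d})$ by definition, this yields the asserted Schur duality for $(\bfS_{n,d}, B\wr\cH(d))$.

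\textbf{Main obstacle.} The delicate step is verifying the fullness of the idempotent $e$, i.e.\ that $\bfS_{n,d}\cdot eV_n^{\otimes d} = V_n^{\otimes d}$, and that the truncation functor $\tC \mapsto e\tC$ is fully faithful on the relevant module subcategory. This requires knowing that split maps $S_{\lambda}$ (now between partial flag varieties / permutation modules for $\Lambda_{n,d}$) realize every $v_{\underline i}$ from the ``small'' summands, which in turn relies on the explicit formula~\eqref{eq:merge-Demazure} and on \cref{prop:merge-is-in} to control how $B\wr\cH(d)$ moves between $M^\lambda$'s of different shapes. One must also check that no new relations among the $\theta_{A,P}$ are forced by enlarging $n$ beyond $d$ — this is where \cref{prop:Thetabasis} is invoked: the basis $\{\theta_{A,P} : A\in\Theta_{n,d},\,P\in\overline{\mathbb B}_{\delta^c(A)}\}$ is already known to be a basis for all $n$, so the dimension count is automatic and one only needs the module-theoretic surjectivity. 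I expect the bookkeeping of which compositions contribute, and the compatibility of the isomorphism~\eqref{eq:VU} with the truncation, to be the bulk of the work, but no genuinely new computation beyond what is already in \cref{sec:SSWD} and \cref{cor:VU}.
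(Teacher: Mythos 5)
Your overall strategy is, in substance, the paper's: identify a piece of $V_n^{\otimes d}$ with $\tC^T$ via \cref{cor:VU}(b) and feed it into \cref{thm:strong-DCP}. The paper does this in a single step for all $n\geq d$, by restricting any $\bfS_{n,d}$-endomorphism to the multiplicity-free submodule $V'=\bigoplus_\lambda M^\lambda$ (one summand for each strict composition of $d$), observing $V'\simeq \tC^T$ and invoking \cref{thm:strong-DCP}; your detour through the case $n=d$ plus an idempotent-truncation lemma is a more roundabout packaging of the same idea. The step you must repair is your base case: it is not true that \cref{thm:strong-DCP} ``says precisely'' that $V_d^{\otimes d}$ realizes the double centralizer property. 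Since $M^\lambda$ depends only on the strict composition $\lambda'$ obtained by deleting zeros, the sum $\bigoplus_{\lambda\in\Lambda_{d,d}}M^\lambda$ contains each $\tC^T_{\lambda'}$ with multiplicity $\binom{d}{\ell}$ ($\ell$ the number of parts of $\lambda'$), so the centralizer of $B\wr\cH(d)$ on $V_d^{\otimes d}$ is $\bfS_{d,d}$, strictly larger than $\overline{\bfS}^T_d$; your claim of ``commuting faithful actions of $B\wr\cH(d)$ and $\overline{\bfS}^T_d$ with full mutual centralizers'' on $V_d^{\otimes d}$ is literally false. The conclusion $\End_{\bfS_{d,d}}(V_d^{\otimes d})=B\wr\cH(d)$ is still correct, but proving it requires exactly the argument you postpone to the passage $n>d$ (equivalently, the paper's restriction argument): cut down to a multiplicity-free copy of $\tC^T$ inside $V_d^{\otimes d}$, use that every summand is reached from it by $B\wr\cH(d)$-linear isomorphisms lying in $\bfS_{d,d}$, and then apply \cref{thm:strong-DCP}. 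Once that is done, your two-step reduction becomes redundant: the same argument applied directly to $V'\subset V_n^{\otimes d}$ gives the theorem for all $n\geq d$ at once.

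On the other hand, the ``main obstacle'' you flag is lighter than you fear. Fullness, $\bfS_{n,d}\cdot eV_n^{\otimes d}=V_n^{\otimes d}$, is immediate from \cref{cor:VU}(b): each $M^\mu$, $\mu\in\Lambda_{n,d}$, is isomorphic as a right $B\wr\cH(d)$-module to a summand of $eV_n^{\otimes d}$, and any such isomorphism, extended by zero, is an element of $\bfS_{n,d}=\End_{B\wr\cH(d)}(V_n^{\otimes d})$; no appeal to \eqref{eq:merge-Demazure}, \cref{prop:merge-is-in} or \cref{prop:Thetabasis} is needed. Moreover, for the inclusion $\End_{\bfS_{n,d}}(V_n^{\otimes d})\subseteq B\wr\cH(d)$ you only need generation together with the observation that if $\phi$ restricts on $eV_n^{\otimes d}$ to right multiplication by $h\in B\wr\cH(d)$, then $\phi(s\cdot ev)=s\cdot(ev\cdot h)=(s\cdot ev)\cdot h$ for all $s\in\bfS_{n,d}$; surjectivity of the restriction map $\End_{\bfS_{n,d}}(V_n^{\otimes d})\to\End_{e\bfS_{n,d}e}(eV_n^{\otimes d})$, and hence the ``fully faithful truncation'' bookkeeping, can be dispensed with.
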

%--------------------------------------------------
\begin{proof}
We have an obvious inclusion $B \wr \cH(d) \subseteq \End_{\bfS_{n,d}}(V_n^{\otimes d})$.
For the opposite inclusion, let $\Lambda'_{n,d}\subset \Lambda_{n,d}$ be the subset of non-decreasing compositions, and consider the following $B \wr \cH(d)$-submodule $V'\subset V_n^{\otimes d}$:
\[
  V' = \bigoplus\nolimits_{\lambda\in \Lambda'_{n,d}} M^\lambda.
\]
Note that $\Lambda'_{n,d}$ is in bijection with the set of strong compositions of $d$.
Applying \cref{cor:VU}(b), we get an isomorphism of $B \wr \cH(d)$-modules $V' \simeq \tC^\cT$.
In particular, by \cref{thm:strong-DCP} we have 
$\End_{\bfS_{n,d}}(V_n^{\otimes d}) \subseteq \End_{\overline{\bfS}^\cT_{d}}(V') = B \wr \cH(d)$, 
and so we may conclude.
\end{proof}

%-------------------------------------------------------------
\begin{cor}\label{cor:Morita}
Suppose that $B \wr \cH(d)$ is a PQWP satisfying \eqref{eq:C1}--\eqref{eq:C3}. The wreath and laurel Schur algebras are Morita-equivalent, provided that $n \geq d$.
\end{cor}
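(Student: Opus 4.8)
The plan is to show that, as right $B\wr\cH(d)$-modules, $V_n^{\otimes d}$ and $\tC^T$ are \emph{additively equivalent} — each is a direct summand of a finite direct sum of copies of the other — and then to deduce that their endomorphism algebras $\bfS_{n,d}$ and $\overline{\bfS}^T_d$ are Morita equivalent by realizing $\overline{\bfS}^T_d$ as the corner $e\bfS_{n,d}e$ for a \emph{full} idempotent $e\in\bfS_{n,d}$.

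First I would set up the comparison of modules. For each strict composition $\mu\vDash d$ write $\widehat\mu\coloneqq(\mu_1,\dots,\mu_{\ell(\mu)},0,\dots,0)\in\Lambda_{n,d}$, which makes sense precisely because $\ell(\mu)\leq d\leq n$, and note that deleting the trailing zeros of $\widehat\mu$ returns $\mu$. By \cref{cor:VU}(b) there are isomorphisms of right $B\wr\cH(d)$-modules $M^{\widehat\mu}\cong\tC^T_\mu$, so that
\[
  W\coloneqq\bigoplus\nolimits_{\mu\vDash d}M^{\widehat\mu}\;\cong\;\bigoplus\nolimits_{\mu\vDash d}\tC^T_\mu\;=\;\tC^T,
\]
and $W$ is a genuine direct summand of $V_n^{\otimes d}=\bigoplus_{\lambda\in\Lambda_{n,d}}M^\lambda$ since the $\widehat\mu$ are distinct elements of $\Lambda_{n,d}$. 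Conversely, each $M^\lambda$ is, again by \cref{cor:VU}(b), isomorphic to $\tC^T_{\lambda'}$, which is a direct summand of $\tC^T\cong W$; hence $V_n^{\otimes d}$ is a direct summand of $W^{\oplus N}$ for $N=|\Lambda_{n,d}|$. Let $e\in\bfS_{n,d}=\End_{B\wr\cH(d)}(V_n^{\otimes d})$ be the projection onto the summand $W$. Then $e\bfS_{n,d}e=\End_{B\wr\cH(d)}(W)\cong\End_{B\wr\cH(d)}(\tC^T)=\overline{\bfS}^T_d$, the last equality being \cref{thm:strong-DCP}.

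It then remains to check that $e$ is full, i.e. $\bfS_{n,d}\,e\,\bfS_{n,d}=\bfS_{n,d}$. Since $V_n^{\otimes d}$ is a direct summand of $W^{\oplus N}$, the identity of $V_n^{\otimes d}$ factors through a finite direct sum of copies of $W=e\,V_n^{\otimes d}$; writing such a factorization componentwise and inserting $e=e^2$ shows that $\mathrm{id}_{V_n^{\otimes d}}$ lies in the two-sided ideal $\bfS_{n,d}\,e\,\bfS_{n,d}$, which is therefore all of $\bfS_{n,d}$. By the standard fact that truncation by a full idempotent induces a Morita equivalence (with equivalence bimodule $\bfS_{n,d}\,e$), we conclude that $\bfS_{n,d}$ and $\overline{\bfS}^T_d$ are Morita equivalent, and hence so are $\bfS^{\mathrm{DJ}}$ and $\overline{\bfS}^{\mathrm{BLM}}$. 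I do not anticipate a serious obstacle here: everything rests on \cref{cor:VU,thm:strong-DCP}, and the only genuine input is the padding-with-zeros step, which is exactly where the hypothesis $n\geq d$ enters — note that for $n<d$ the same argument instead identifies $\bfS_{n,d}$ with the idempotent truncation of $\overline{\bfS}^T_d$ cut out by the compositions of $d$ with at most $n$ parts.
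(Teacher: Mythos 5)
Your proof is correct and follows essentially the same route as the paper: identify $\tC^T$ with the summand $V'$ of $V_n^{\otimes d}$ obtained by padding strict compositions with zeros (this is where $n\geq d$ enters), take the corresponding idempotent, identify the corner algebra with $\overline{\bfS}^T_d$ via \cref{cor:VU}(b) and \cref{thm:strong-DCP}, and invoke Morita theory. The only difference is that you spell out explicitly the fullness of the idempotent (via factoring $\mathrm{id}_{V_n^{\otimes d}}$ through copies of $W$), which the paper leaves implicit in the phrase that the two modules have the same direct summands.
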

%-------------------------------------------------------------
\begin{proof}
By \cref{cor:VU}(b), both the tensor space $V_n^{\otimes d}$ and the polynomial representation $\tC^\cT$ have the same direct summands as right $B\wr \cH(d)$-modules.
Let $\epsilon\in \bfS_{n,d}$ be the idempotent corresponding to the split inclusion $V'\simeq \tC^\cT\subset V_n^{\otimes d}$.
Then $\overline{\bfS}^\cT_{d} \simeq \epsilon \bfS_{n,d} \epsilon$, and Morita theory implies that $\bfS_{n,d}\mathrm{-mod} \to \overline{\bfS}^\cT_{d}\mathrm{-mod}$, $M\mapsto M\epsilon$ is an equivalence of categories.
\end{proof}
%=================================
\medskip
\section{Examples and applications}\label{sec:Appl}
%=================================
%--------------------------------------------------
\subsection{Affine Hecke algebras}\label{ssec:AHA}
%--------------------------------------------------
The (extended) affine Hecke algebra $\cH^{\mathrm{ext}}_q(\fkS_d)$ of type A is a PQWP with the following parameters:
\[
B = \bbk[x^{\pm1}],
\quad
S =(q-1)(1\otimes1),
\quad
R = q(1\otimes 1),
\quad
\alpha = 1\otimes 1,
\quad
\beta =(1-q)x_2.
\]
It is well-known \cite{ginzburg1994quantum} that the corresponding Schur algebra $\widehat{S}_q(n,d)$ can be realized as a convolution algebra for affine partial flags (such a realization corresponds to the Coxeter presentation).
$\widehat{S}_q(n,d)$ has a Morita equivalent version given by the $K\!$-theoretic convolution algebra
\[
  \widehat{S}^K_q(d)\coloneqq \sum\nolimits_{\lambda,\mu \vDash d} K^{\mathrm{GL}_d\times \bbC^*}(T^*(\mathrm{GL}_d/P_\lambda)\times_{\mathcal{N}_d}T^*(\mathrm{GL}_d/P_\mu)),
\]
defined as in~\cite[Ch.~5]{chriss1997representation}, where $P_\lambda\subseteq \mathrm{GL}_d$ are standard parabolic subgroups, and $\mathcal{N}_d\subseteq \mathfrak{gl}_d$ the nilcone.
The Schur duality is known for both $\widehat{S}_q(n,d) \equiv \bfS_{n,d}$ and  $\widehat{S}^K_q(d) \equiv \overline{\bfS}^\cT_d$.
Our work only produces a new twisted convolution algebra construction for $\widehat{S}_q(n,d)$, which can be related to the $K\!$-theoretic convolution via equivariant localization, as in~\cite{maksimauKLRSchurAlgebras2022a}.
The basis $\theta_{A,P}$ is different from Dipper--James basis, and rather recovers the basis in~\cite[Prop.~4.17]{miemietz2019affine}.

%--------------------------------------------------
\subsection{Degenerate affine Hecke algebras}
%--------------------------------------------------
The degenerate affine Hecke algebra $\cH^{\textup{deg}}_d$ of type A is a PQWP with the following parameters:
\[
B = \bbk[x],
\quad
S =0,
\quad
R =  1\otimes 1
=\alpha = \beta.
\]
To our knowledge, these algebras were explicitly introduced and studied in~\cite{brundan2025yangians}; however, their definition might be considered folkloric, see e.g. Remark 3.2 in {\it loc. cit.}
Our theorems recover the Schur duality of~\cite[Thm.~3.3]{brundan2025yangians}:

\begin{cor}
There is a Schur duality between $\cH^{\textup{deg}}_d$ and the corresponding wreath Schur algebra $\bfS_{n,d}$ on the tensor space $V_n^{\otimes d}$ for $n \geq d$ over a field $\bbk$ of any characteristic.
\end{cor}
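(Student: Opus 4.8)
The plan is to observe that $\cH^{\textup{deg}}_d$ is a PQWP satisfying the standing hypotheses \eqref{eq:C1}--\eqref{eq:C3}, and then to invoke \cref{thm:SW2} directly; the only thing worth spelling out is \emph{why} no invertibility assumption intervenes, so that the conclusion is genuinely characteristic-free.

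\textbf{Step 1: the PQWP data.} As recorded in \cref{ex:Heckes} and \cref{tab:Rbeta}, the degenerate affine Hecke algebra of type $A$ is the PQWP with $F = \bbk$, $B = \bbk[x]$, $S = 0$, and $R = \alpha = \beta = 1\otimes 1$. Condition \eqref{eq:C1} is then immediate: $\alpha(\sigma(\alpha)+S) = (1\otimes 1)(1\otimes 1) = 1\otimes 1 = R$. Condition \eqref{eq:C2} is vacuous, since $F\otimes F = \bbk$ is central in itself. Condition \eqref{eq:C3} holds because $P = \alpha(x_1-x_2)+\beta = x_1 - x_2 + 1$ is a nonzero element of the integral domain $\bbk[x_1,x_2]$, hence not a zero divisor. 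All three verifications are valid over a field $\bbk$ of \emph{any} characteristic.

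\textbf{Step 2: apply the Dipper--James Schur duality.} Since $n\geq d$ and \eqref{eq:C1}--\eqref{eq:C3} hold, \cref{thm:SW2} applies verbatim and yields $\cH^{\textup{deg}}_d \cong \End_{\bfS_{n,d}}(V_n^{\otimes d})$. Combined with the defining equality $\bfS_{n,d} = \End_{\cH^{\textup{deg}}_d}(V_n^{\otimes d})$, this is exactly the asserted double centralizer property on $V_n^{\otimes d}$.

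\textbf{The point to watch.} The main (indeed only) subtlety is that the argument must be routed through the \emph{laurel} Schur algebra: \cref{cor:VU}(b) identifies the $M^\lambda$ with the bimodule summands $\tC^T_{\lambda'}$, and then the duality $\End_{\overline{\bfS}^T_d}(\tC^T) = \cH^{\textup{deg}}_d$ of \cref{thm:strong-DCP} is used, rather than \cref{cor:DCP}. This is essential, because by \cref{expl:nilHecke} one has $m_\lambda = \prod_i \lambda_i!$ for $\cH^{\textup{deg}}_d$, so $m_{(d)} = d!$ is a non-unit as soon as $\operatorname{char}\bbk \leq d$; the coil Schur algebra $\bfS^T_d$ is then too small and \cref{cor:DCP} fails. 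Since neither \cref{thm:strong-DCP} nor \cref{thm:SW2} carries an invertibility hypothesis, there is no obstacle, and the corollary follows from \cref{thm:SW2} once \eqref{eq:C1}--\eqref{eq:C3} are checked as above.
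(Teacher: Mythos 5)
Your proposal is correct and follows exactly the route the paper intends: verify that $\cH^{\textup{deg}}_d$ is a PQWP with $S=0$, $R=\alpha=\beta=1\otimes 1$ satisfying \eqref{eq:C1}--\eqref{eq:C3}, then apply \cref{thm:SW2} (which rests on \cref{thm:strong-DCP} for the laurel Schur algebra and hence carries no invertibility hypothesis). Your remark that $m_{(d)}=d!$ makes \cref{cor:DCP} unusable in small characteristic is precisely the point the paper makes in \cref{expl:nilHecke}, so nothing is missing.
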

There exists a natural map $\mathrm{Y}(\mathfrak{gl}_n)\to \bfS_{n,d}$ for any $d$, which is a surjection in characteristic $0$~\cite[Cor.~9.3]{brundan2025yangians}.
In positive characteristic, we expect that a similar statement holds for a ``divided powers'' version of $\mathrm{Y}(\mathfrak{gl}_n)$.
In contrast, while the laurel Schur algebra $\overline{\bfS}^\cT_d$ also enjoys a Schur duality (on a submodule of $V_n^{\otimes d}$), we suspect that its connection with Yangians is less transparent.

%--------------------------------------------------
\subsection{Pro-$p$ Iwahori Hecke algebras}\label{ssec:pro-p}
%--------------------------------------------------

Denote by $\cH(q_s, c_s)$ the generic pro-$p$ Iwahori Hecke algebras with respect a $p$-adic group $G$ and choice of parameters $q_s$ and $c_s$. 
Consider the case $G =\mathrm{GL}_d(\bbQ_p)$, $q_s = 1$, and $c_{s_i} = (q-q^{-1})e_i$ for some idempotent $e \in (\frac{\bbk[t]}{(t^{p-1}-1)})^{\otimes 2}$ for all $i$. 
The quadratic relation does split since
\begin{equation}
H^2 -(q-q^{-1})e H -1 = (H+(q^{-1}+1)e-1)(H-(q+1)e+1).
\end{equation}
Assume that $e$ is Frobenius.
Then, $\cH(q_s, c_s)$  is a PQWP with the following parameters:
\[
B = \frac{\bbk[t]}{(t^{p-1}-1)}[x^{\pm1}],
\quad
S =(q-q^{-1})e,
\quad
R = 1\otimes 1,
\quad
\alpha = (1+q^{-1})e - 1\otimes 1,
\quad
\beta = (q^{-1}-q)e x_2.
\]
When $e = \frac{1}{p-1}\sum_{j=1}^{p-1} t^j \otimes t^{-j}$, such an algebra is isomorphic to the affine Yokonuma algebra \cite{chlouveraki2016affine}, a quantization of the group algebra of $(C_m\times \bbZ)\wr \fkS_d$.

\begin{cor}\label{cor:pro-p}
Consider the pro-$p$ Iwahori Hecke algebras $\cH(e)$ for $\mathrm{GL}_d(\bbQ_p)$ at the specialization $q_s = 1$, and $c_{s_i} = (q-q^{-1})e_i$ for a Frobenius idempotent $e$. Then,
there is a Schur duality between $\cH(e)$ and the corresponding wreath Schur algebra $\bfS_{n,d}$ on the tensor space $V_n^{\otimes d}$, if $n \geq d$.
\end{cor}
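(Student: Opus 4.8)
The plan is to reduce the statement to \cref{thm:SW2} by checking that $\cH(e)$ is a polynomial quantum wreath product satisfying \eqref{eq:C1}--\eqref{eq:C3}; the hypothesis $n\geq d$ is then exactly what \cref{thm:SW2} requires. First I would recall from \cref{ssec:pro-p} that, at the specialization $q_s=1$ and $c_{s_i}=(q-q^{-1})e_i$ with $e$ a symmetric weak Frobenius idempotent, the algebra $\cH(e)$ is a PQWP in the sense of \cref{def:PQWP}, with base $B=\frac{\bbk[t]}{(t^{p-1}-1)}[x^{\pm1}]$ and parameters
\[
S=(q-q^{-1})e,\qquad R=1\otimes1,\qquad \alpha=(1+q^{-1})e-1\otimes1,\qquad \beta=(q^{-1}-q)ex_2 .
\]
Here $F=\frac{\bbk[t]}{(t^{p-1}-1)}$ is commutative; $\beta$ lies in $W(F)^{\fkS_2}[x_1,x_2]$ because $e$ is a $\fkS_2$-symmetric weak Frobenius element, and the secondary constraint of \eqref{eq:A2} holds trivially since the only nonzero coefficient of $\beta$ is $\Delta^{01}=(q^{-1}-q)e$.

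Next I would verify \eqref{eq:C1}--\eqref{eq:C3} one by one. Condition \eqref{eq:C2} is immediate, as $F\otimes F$ is commutative and hence $\alpha,\beta$ are central. For \eqref{eq:C1}, commutativity of $F$ together with symmetry of $e$ gives $\sigma(\alpha)=\alpha$, and a one-line computation using $e^2=e$ and the identity $(1+q^{-1})(1+q)=(1+q^{-1})+(1+q)$ yields $\alpha(\sigma(\alpha)+S)=\bigl((1+q^{-1})e-1\otimes1\bigr)\bigl((1+q)e-1\otimes1\bigr)=1\otimes1=R$; this is exactly the splitting of the quadratic relation displayed in \cref{ssec:pro-p}. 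For \eqref{eq:C3}, I would use that $e$ is idempotent: $F\otimes F$ decomposes as the product of rings $e(F\otimes F)\times(1\otimes1-e)(F\otimes F)$, so $(F\otimes F)[x_1^{\pm1},x_2^{\pm1}]$ is a product of two Laurent polynomial rings. Rewriting $P=\alpha(x_1-x_2)+\beta=q^{-1}ex_1-qex_2-(1\otimes1-e)(x_1-x_2)$, its components in these two factors are $q^{-1}x_1-qx_2$ and $-(x_1-x_2)$, each of which has an invertible coefficient and is therefore a non-zero divisor in the corresponding Laurent polynomial ring; hence $P$ is a non-zero divisor.

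Having checked \eqref{eq:C1}--\eqref{eq:C3}, I would invoke \cref{thm:SW2}: since $n\geq d$, it gives $\cH(e)\cong\End_{\bfS_{n,d}}(V_n^{\otimes d})$, which together with the definition $\bfS_{n,d}=\End_{\cH(e)}(V_n^{\otimes d})$ is the asserted double centralizer property. I do not expect any real obstacle here. The computations above are routine; the only point needing a little care is \eqref{eq:C3}, handled by the idempotent decomposition, and the structural input — that Vign\'eras' Bernstein--Lusztig presentation of $\cH(q_s,c_s)$ specializes to the wreath relation $H_ib=\sigma_i(b)H_i+\partial^\beta_i(b)$ with the $\beta$ above — is already established in \cref{ssec:pro-p}.
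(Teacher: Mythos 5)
Your proposal is correct and follows essentially the same route as the paper, which proves this corollary implicitly by exhibiting the splitting $\alpha=(1+q^{-1})e-1\otimes1$ of the quadratic relation and the PQWP data of \cref{ssec:pro-p}, and then invoking \cref{thm:SW2}. Your explicit verifications of \eqref{eq:C1}--\eqref{eq:C3} (in particular the check of \eqref{eq:C3} via the decomposition $F\otimes F\simeq e(F\otimes F)\times(1\otimes1-e)(F\otimes F)$) are sound and simply fill in details the paper leaves to the reader.
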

In particular, \cref{cor:pro-p} holds for the affine Yokonuma algebras, where $e = \frac{1}{p-1}\sum_j t^j \otimes t^{-j}$.
Since these algebras can be used to construct invariants of framed knots in the solid torus~\cite{10.1093/imrn/rnv257}, we are curious whether there can be any applications in this direction.
We also remark that Schur algebras for finite Yokonuma algebras with respect to a different module were studied in an unpublished manuscript of Cui~\cite{cui2014yokonuma}.

Next, recall that, when working at the Iwahori level instead of the pro-$p$ level, the Schur duality present in \cref{ssec:AHA} involves a $\cH^{\mathrm{ext}}_q(\fkS_d)$-module which is the endomorphism ring of the Gelfand--Graev representation.
The study of this module goes back to Bushnell and Henniart~\cite{bushnell2003generalized}, and is refined by Chan and Savin in~\cite{chan2018iwahori}.
Vign\'eras proved in \cite{vigneras2003schur} that the corresponding Schur algebra is Morita equivalent to the so-called first layer of the unipotent block for the category of smooth representations of $\mathrm{GL}_d(\bbQ_p)$ over a field $\bbk = \overline{\bbk}$ of positive characteristic not equal to $p$.

At the pro-$p$ level, the structure of the Gelfand--Graev representation gets more complicated~\cite{gao2024genuine}. 
One reason is that the multiplicity one theorem for Whittaker models fails for metaplectic groups.
We expect our results to be useful to understand a Schur duality involving the pro-$p$ Iwahori Hecke algebra and its Gelfand--Graev representation, essentially computing the endomorphism ring of the Gelfand--Graev representation over $\cH(e)$.
We are curious if our ``pro-$p$ Schur algebra'' gives  representation theoretic information about the metaplectic cover of $\mathrm{GL}_d(\bbQ_p)$.

%--------------------------------------------------
\subsection{Affine zigzag algebras and curve Schur algebras}\label{ssec:MM-ex}
%--------------------------------------------------
Let $Q$ be a Dynkin quiver.
The affine zigzag algebras $\cZ_d(Q)$ were studied in~\cite{KM_AZAI2019} in relation to (imaginary) semicuspidal categories for quiver Hecke algebras for the associated affine quiver $Q^{(1)}$. 
These algebras are particular cases of Savage algebras, and as such satisfy conditions~\eqref{eq:C1} and \eqref{eq:C3}, as explained in \cref{expl:alphas}.
Since the zigzag algebra of $Q$ is commutative only in type $A_1$ (namely, $Z_{A_1}\simeq H^*(\bbP^1) = \bbk[c]/c^2$), the condition \eqref{eq:C2} only holds in this case.

For the affine zigzag algebra $\cZ_d(A_1)$, 
coil and laurel Schur algebras appeared in~\cite{maksimauKLRSchurAlgebras2022a}.
There, curve Hecke algebra $\cH_d^C$ and Schur algebra $\cS^C_d$ were defined for any smooth projective curve $C$.
The latter admitted two distinct $\bbZ$-forms, in the notations of {\it loc. cit.} $\widetilde{\cS}^{\bbP^1}_d \subsetneq \cS^{\bbP^1}_d$, which gave rise to very different reductions modulo $p$. 
Here, $\overline{\bfS}^\cT_d \equiv \cS^{\bbP^1}_d$, and $\widetilde{\cS}^{\bbP^1}_d \supsetneq \bfS_d^\cT$ is a ``partially divided powers'' version of Schur algebra.
The reduction of $\widetilde{\cS}^{\bbP^1}_d$ controls the semicuspidal category of type $A_1^{(1)}$ in small characteristic.
Our \cref{thm:strong-DCP} establishes Schur duality between $\cS^C_d$ and $\cH^C_d$, but suggests that one should not expect a double centralizer description of $\widetilde{\cS}^{\bbP^1}_d$ in small characteristic.

Let us momentarily admit \cref{conj:noncomm-SW}.
The combinatorics of Gelfand--Graev idempotents in the semicuspidal algebra suggest that the latter should be described by a certain idempotent truncation of the ``partially divided powers'' version of laurel Schur algebra $\overline{\bfS}_{d}^\cT$.
Diagrammatically, this means we only allow thick strands of ``pure color'':

\begin{conj}
  Let $Q$ be a Dynkin quiver, $Q^{(1)}$ the corresponding affine quiver, $R(d\delta)$ the quiver Hecke algebra, and $C(d\delta)$ its semicuspidal quotient, where $d\delta$ is an imaginary root of $\widehat{\mathfrak{g}}_Q$.
  Let $F$ be the zigzag algebra of $Q$ over $\bbZ$, $\Delta\in F\otimes F$ its Frobenius element, $\{e_i\}_{i\in I}$ the complete set of primitive idempotents in $F$, and $F_{ij} = e_iFe_j$.
  Consider the laurel Schur algebra $\overline{\bfS}_{d}^\cT$ with parameters $\alpha = 1$, $\beta = \Delta$.
  For any $\lambda\vDash d$, denote 
  \[
    T_\lambda \coloneqq \bigotimes_{k}\bigoplus_{i,j\in I} (F_{ij}[x])^{\otimes \lambda_k},\quad B_\lambda \coloneqq \bbZ[x_1,\ldots, x_d]^{\fkS_\nu}(M_{\nu} T_\nu S_{\nu}).
  \]
  Let $\bfS'_d\subset \overline{\bfS}_{d}^\cT$ be the subalgebra with basis~\eqref{eq:spanning-set2}, where $\overline{\mathbb{B}}_g$ runs over a $\bbZ$-basis of $B_\lambda$.
  Then $C(d\delta)$ is Morita equivalent to the reduction of $\bfS'_d$ modulo $p$ over $\overline{\bbF}_p$, for all $p>0$.
\end{conj}

One of our reasons to leave \cref{conj:noncomm-SW} unproven is the hope that we can still realize these smaller subalgebras as twisted convolution algebras, despite \cref{subs:further-generalizations}.
This was recently achieved in~\cite{MM25} in the smallest non-trivial case, where the authors relate the semicuspidal category for quiver \textit{Schur} algebra of type $A_1^{(1)}$ to ``pure color'' idempotent truncation of coil Schur algebra associated to the \textit{extended} zigzag algebra of type $A_1$.

%--------------------------------------------------

\appendix
\section{Polynomial representation}\label{app:polyrep}
\subsection{Faithfulness}
We prove \cref{prop:merge-is-in} by checking the action of the two sides of~\eqref{eq:Klambda} on a certain faithful representation.
Namely, by our setup in \cref{sec:conv-alg}, 
the algebra $\bfS_d$ acts via twisted convolutions on $\bfT_d \coloneqq \cR_G(Y) = \bigoplus_{\lambda\in \Lambda} \cR^{\fkS_\lambda}$.

\begin{defn}
  We call $\bfT_d$ the \textit{polynomial representation} of $\bfS_d$.
\end{defn}

\begin{lem}\label{prop:poly-faithful}
  The action of $\bfS_d$ on the polynomial representation $\bfT_d$ is faithful.
\end{lem}

\begin{proof}
  Assume we have a nonzero element $\phi\in \bfS_d$, such that $\phi v = 0$ for any $v\in \bfT_d$.
  Truncating by idempotents, we can assume that $\phi \in \cR_G(Y_\lambda\times Y_\mu)$.
  Recall that for $v\in \cR_G(Y_\mu)$ and $[g]\in \fkS_d/\fkS_\lambda$, we have
  \begin{equation}\label{eq:action-poly}
      \phi v([g]) = \sum_{h\in \fkS_d/\fkS_\mu} \phi([g],[h])e([h])^{-1}v([h])
    = \sum_{h\in \fkS_d/\fkS_\mu} \phi([g],[h])h(e_\mu)^{-1}h(v([1])).
  \end{equation}
  Let us assume that $\mu = \omega$ for simplicity; the general case is analogous.
  Consider the monomial basis $B = \{x_1^{i_1}\ldots x_{d-1}^{i_{d-1}} : 0\leq i_j \leq d-j\}$ of the ring of coinvariants
  \[
    \Co_d = \bbk[x_1,\ldots,x_d]/(\bbk[x_1,\ldots,x_d]^{\fkS_d}).
  \]
  It is well known that $\Co_d$ is isomorphic to the regular $\fkS_d$-module.
  In particular, the $d!\times d!$ matrix $(h(b))_{h\in\fkS_d, b\in B}$ is invertible.
  According to the formula~\eqref{eq:action-poly} and the assumption $\phi v=0$, this implies that the vector $(\phi([1],[h])h(e)^{-1})_{h\in \fkS_d}$ vanishes.
  Since $\phi$ is $\fkS_d$-equivariant, this means that $\phi=0$, and so we arrived at a contradiction.
\end{proof}
\subsection{Merges and splits}
Let us write out the action of our favorite elements of $\bfS_d$ on $\bfT_d\equiv  \bigoplus_\lambda \cR^{\fkS_\lambda}$.
Each split $S_\lambda$ fixes $b \in \cR^{\fkS_\lambda}$ since
$
S_\lambda b  = \sum_{g\in \fkS_d/\fkS_\lambda}\delta_{[g], [1]} e(g) g(b) e(g)^{-1} = b$.
For $b\in \cR$,
\begin{equation}\label{eq:merge-Demazure}
\begin{split}
  M_\lambda b  
  &= \sum\nolimits_{g\in \fkS_d} \delta_{[1], [g]} e_\lambda g(b) g(e)^{-1}
  = \sum\nolimits_{g\in \fkS_\lambda} g(be_\lambda/e) 
  = \sum\nolimits_{g\in \fkS_\lambda} g \big(b {\textstyle \prod_{(i,j)\in L_\lambda} \frac{P_{ij}}{x_i-x_j} }\big) 
  \\
  &= \partial_{\lambda} \big( {\textstyle b \prod_{(i,j)\in L_\lambda}P_{ij}}\big),
\end{split}
\end{equation}
where $\partial_{\lambda}$ is the Demazure operator associated to the longest element in $\fkS_\lambda$, and the last equality is standard (see e.g.~\cite[Lemma 5.5]{maksimauKLRSchurAlgebras2022a}).
An analogous computation gives a formula for the action of partial merges on the polynomial representation:
  \[
    M_{\lambda\nu}(f) = \partial_{\lambda\nu} \left( f \prod\nolimits_{(i,j)\in L_\lambda\setminus L_\nu}P_{ij}\right),
  \]
where $\partial_{\lambda\nu}$ is the Demazure operator corresponding to the longest element $w_\circ \in\fkS_\lambda^\nu$.

\subsection{Proof of \cref{prop:merge-is-in}}
\begin{proof}%[{Proof of \cref{prop:merge-is-in}}]
  Both sides clearly factor into a product over the components of $\lambda$, therefore it suffices to prove the claim for $\lambda = (d)$.
  Thanks to \cref{prop:poly-faithful}, it suffices to prove it on the polynomial representation.
  We thus identify $K_{(d)}$ with the difference operator $\partial_{w_0}\prod_{1\leq i<j\leq n}P_{ij}$.
  This operator can be further factorized as $M_{d-1}M_{d-2}\ldots M_1$, where
  \begin{equation}
    M_k =  \partial_{(k-1,1)}\prod\nolimits_{i=1}^{k-1}P_{ik}, \qquad \partial_{(k-1,1)}\coloneqq \partial_{1}\ldots\partial_{k-1}.
  \end{equation}
  Analogously, we can factor the right-hand side:
  \begin{equation}
    \sum\nolimits_{w\in\fkS_\lambda} H_w = H'_{d-1}H'_{d-2}\ldots H'_1,\qquad H'_k = H_{(1\;2\;\cdots\; k)} + \alpha_{12} H_{(2\;\cdots\; k)} + \ldots + \prod\nolimits_{i=1}^{k-1}\alpha_{i,k}.
  \end{equation}
  Therefore, it suffices to show that $M_k = H'_k$ for all $k$.
  For $k=1$, this follows from the definition of $H_i$ in \cref{prop:PQWP-to-TCA}. 
  Furthermore, note that $H'_{k+1} = H'_kH_{(k\;k+1)} + \prod_{i=1}^{k}\alpha_{i,k+1}$.
  Therefore, reasoning by recurrence, we are reduced to proving that
  \begin{equation}\label{eq:merge-recurrence}
    M_{k+1} = \prod\nolimits_{i=1}^{k}\alpha_{i,k+1} + M_kH_k
  \end{equation}
  as operators on $\cR^{\fkS_k} = (F^{\otimes k}[x_1,\ldots,x_k])^{\fkS_k}\otimes F^{\otimes (d-k)}[x_{k+1},\ldots,x_d]$.
For $j<k$, write
\begin{equation}
    \Pi_{j,k} = \prod\nolimits_{i=1}^j P_{j,k}.
\end{equation}
  We have:
  \begin{align*}
    & M_{k+1} - M_kH_k = \partial_{(k,1)} \Pi_{k,k+1} - \partial_{(k-1,1)} \Pi_{k-1,k}(\partial_kP_{k,k+1} - \alpha_{k}) \\
    & = \partial_{(k-1,1)}\left(\Pi_{k-1,k}\partial_kP_{k,k+1} + \partial_k(\Pi_{k-1,k+1})P_{k,k+1} - \Pi_{k-1,k}\partial_kP_{k,k+1} + \alpha_{k}\Pi_{k-1,k}\right) \\
    & = \partial_{(k-1,1)}\left(\alpha_k(\Pi_{k-1,k+1} - \Pi_{k-1,k}) + \beta_k\partial_k(\Pi_{k-1,k+1}) + \alpha_{k}\Pi_{k-1,k}\right)\\
    & = \partial_{(k-1,1)}\left(\alpha_k\Pi_{k-1,k+1} + \rho_k(\Pi_{k-1,k+1}) \right).
  \end{align*}
  Note that this operator commutes with multiplication by any element in $\cR^{\fkS_k}$.
  Thus, it remains to prove the following equality of elements in $\cR$:
  \begin{equation}\label{eq:idempotents-induction}
    \partial_{(k-1,1)}\left(\alpha_k\Pi_{k-1,k+1} + \rho_k(\Pi_{k-1,k+1}) \right) = \prod\nolimits_{i=1}^{k}\alpha_{i,k+1}.
 \end{equation}
  We proceed by induction on $k$, with the base case $k=1$ being trivial.
  Using the condition \eqref{eq:A2} on $\beta$, we get the following:
  \begin{align*}
    & \partial_{k-1}\left( \alpha_{k} P_{k-1,k+1} + \rho_k(P_{k-1,k+1}) \right) \\
    & = \partial_{k-1}\left( \alpha_{k}\alpha_{k-1,k+1}(x_{k-1}- x_{k+1}) + \alpha_{k}\beta_{k-1,k+1} + \alpha_{k-1,k+1}\beta_{k} + \beta_k(\Delta^{01}_{k-1,k+1}+ \Delta^{11}_{k-1,k+1}x_{k-1}) \right)\\
    & = \alpha_{k}\alpha_{k-1,k+1} + \partial_{k-1}\left( \beta_k(\Delta^{01}_{k-1,k+1}+ \Delta^{11}_{k-1,k+1}x_{k-1}) \right)\\
    & = \alpha_{k}\alpha_{k-1,k+1} + \left( \Delta^{00}_{k}\Delta^{11}_{k-1,k+1}-\Delta^{10}_{k}\Delta^{01}_{k-1,k+1}\right) + \left(\Delta^{01}_{k}\Delta^{11}_{k-1,k+1}-\Delta^{11}_{k}\Delta^{01}_{k-1,k+1}\right)x_{k+1}\\
    & = \alpha_{k}\alpha_{k-1,k+1}.
  \end{align*}
  As a consequence, using Leibniz rule for $\partial_{k-1}$ and $\rho_k$ (see \eqref{def:wr2}), we get
  \begin{align*}
    & \partial_{(k-1,1)}\left(\alpha_k\Pi_{k-1,k+1} + \rho_k(\Pi_{k-1,k+1}) \right)\\
    & =\partial_{(k-2,1)}\left( \Pi_{k-2,k+1}\partial_{k-1}(\alpha_k P_{k-1,k+1}) + \partial_{k-1}(\Pi_{k-2,k+1}\rho_k(P_{k-1,k+1}) + \rho_k(\Pi_{k-2,k+1})P_{k-1,k}) \right)\\
    & = \partial_{(k-2,1)}\left( \Pi_{k-2,k+1}\partial_{k-1}(\alpha_k P_{k-1,k+1} + \rho_k(P_{k-1,k+1})) + \partial_{k-1}(\rho_k(\Pi_{k-2,k+1})P_{k-1,k}) \right)\\
    & = \partial_{(k-2,1)}(\alpha_{k}\alpha_{k-1,k+1} + \alpha_{k-1}\rho_k + (\alpha_{k-1}+ S_{k-1})\sigma_{k-1}\rho_k + \rho_{k-1}\rho_k)(\Pi_{k-2,k+1})\\
    & = \partial_{(k-2,1)}\left( (\alpha_{k-1}\rho_k + (\alpha_{k-1}+ S_{k-1})\sigma_{k-1}\rho_k + \rho_{k-1}\rho_k)(\Pi_{k-2,k+1}) - \alpha_k\sigma_k\rho_{k-1}(\Pi_{k-2,k})\right)
    \\
    &\quad+\prod\nolimits_{i=1}^k \alpha_{i,k+1},
  \end{align*}
  where we applied the inductive assumption to $\partial_{(k-2,1)}(\alpha_{k-1,k+1}\Pi_{k-2,k+1})$ in the last line.
  Since $\Pi_{k-2,k}$ is independent of $(k-1)$-st factor in $\cR$, and $\rho_{k-1} = \beta_{k-1}\partial_{k-1}$, we have
  \[
    \alpha_k\sigma_k\rho_{k-1}(\Pi_{k-2,k}) = \alpha_{k-1}\sigma_k\rho_{k-1}(\Pi_{k-2,k}) = \alpha_{k-1}\sigma_{k-1}\rho_{k}(\Pi_{k-2,k+1}).
  \]
  Continuing our chain of equalities, we have
  \begin{align*}
    & \partial_{(k-1,1)}\left(\alpha_k\Pi_{k-1,k+1} + \rho_k(\Pi_{k-1,k+1}) \right)\\
    & = \prod\nolimits_{i=1}^k \alpha_{i,k+1} + \partial_{(k-2,1)} (\alpha_{k-1}\rho_k + S_{k-1}\sigma_{k-1}\rho_k + \rho_{k-1}\rho_k)(\Pi_{k-2,k+1})\\
    & = \prod\nolimits_{i=1}^k \alpha_{i,k+1} + \partial_{(k-2,1)} (\alpha_{k-1}\rho_k + S_{k-1}\sigma_{k-1}\rho_k\sigma_{k-1} + \rho_{k-1}\rho_k\sigma_{k-1})(\Pi_{k-2,k+1})\\
    & = \prod\nolimits_{i=1}^k \alpha_{i,k+1} + \partial_{(k-2,1)}\alpha_{k-1}\rho_k(\Pi_{k-2,k+1}) + \partial_{(k-2,1)}\rho_k\sigma_{k-1}\rho_k(\Pi_{k-2,k+1}),
  \end{align*}
  where we used \eqref{def:br2} and the fact that $\rho_{k-1}(\Pi_{k-2,k+1}) = 0$.
  
  Finally, consider the last term in the expression above, and use the inductive assumption:
  \begin{align*}
    & \partial_{(k-2,1)}\rho_k\sigma_{k-1}\rho_k(\Pi_{k-2,k+1})
     = \partial_{(k-2,1)}\rho_k\sigma_k\rho_{k-1}(\Pi_{k-2,k})
     = \rho_k\sigma_k\partial_{(k-2,1)}\rho_{k-1}(\Pi_{k-2,k})
     \\
    & = \rho_k\sigma_k\left( -\partial_{(k-2,1)}(\alpha_{k-1}\Pi_{k-2,k}) + \prod\nolimits_{i=1}^{k-1} \alpha_{i,k} \right)
     = -\partial_{(k-2,1)}\rho_k\sigma_k(\alpha_{k-1}\Pi_{k-2,k})
     \\
    & = -\partial_{(k-2,1)}\rho_k(\alpha_{k-1,k+1}\Pi_{k-2,k+1})
     = -\partial_{(k-2,1)}\alpha_{k-1}\rho_k(\Pi_{k-2,k+1}).
  \end{align*}
  This concludes the proof of~\eqref{eq:idempotents-induction}, which implies~\eqref{eq:merge-recurrence}, and thus the proposition is proved.
\end{proof}

\section{Some computations}\label{app:comps}
\subsection{Useful equations in PQWP}
\begin{lem}\label{lem:side-deriving-beta}
  The following equations hold in a PQWP:
  \[
    \rho(\beta) = S\beta,\qquad \rho_1(\beta_{13}) = \rho_2(\beta_1) + \beta_{13}S_2 = -\sigma_2\rho_1(\beta_2).
  \]
\end{lem}
\begin{proof}
  It follows from the definition of $\beta$ that $\beta - \sigma(\beta) = S(x_1 - x_2)$. The first equation follows:
  \[
  \rho(\beta) = \frac{\beta^2 - \sigma(\beta)\beta}{x_1 - x_2} = \frac{\beta - \sigma(\beta)}{x_1 - x_2} \beta = S\beta.
  \]
  For the second equation, we will only prove the first equality, since the other one reduces to the same computation.
  First of all, we have
  \begin{align*}
    \rho_1(\beta_{13})
    = \beta_{12}(\Delta^{10}_{13} + \Delta^{11}_{13}x_3)
    &= \Delta^{00}_{12}\Delta^{10}_{13} + \Delta^{10}_{12}\Delta^{10}_{13}x_1 + \Delta^{01}_{12}\Delta^{10}_{13}x_2 + \Delta^{00}_{12}\Delta^{11}_{13}x_3 \\
    &+ \Delta^{11}_{12}\Delta^{10}_{13}x_1x_2 + \Delta^{10}_{12}\Delta^{11}_{13}x_1x_3 + \Delta^{01}_{12}\Delta^{11}_{13}x_2x_3 + \Delta^{11}_{12}\Delta^{11}_{13}x_1x_2x_3.
  \end{align*}
  On the other hand,
  \begin{align*}
    \rho_2(\beta_1)
    = \beta_{23}(\Delta^{01}_{12} + \Delta^{11}_{12}x_1)
    &= \Delta^{00}_{23}\Delta^{01}_{12} + \Delta^{00}_{23}\Delta^{11}_{12}x_1 + \Delta^{10}_{23}\Delta^{01}_{12}x_2 + \Delta^{01}_{23}\Delta^{01}_{12}x_3 \\
    &+ \Delta^{10}_{23}\Delta^{11}_{12}x_1x_2 + \Delta^{01}_{23}\Delta^{11}_{12}x_1x_3 + \Delta^{11}_{23}\Delta^{01}_{12}x_2x_3 + \Delta^{11}_{23}\Delta^{11}_{12}x_1x_2x_3,
  \end{align*}
  \begin{align*}
    \beta_{13}S_2
    = \Delta^{00}_{13}(\Delta^{10}_{23}-\Delta^{01}_{23}) + \Delta^{10}_{13}(\Delta^{10}_{23}-\Delta^{01}_{23})x_1 + \Delta^{01}_{13}(\Delta^{10}_{23}-\Delta^{01}_{23})x_3 + \Delta^{11}_{13}(\Delta^{10}_{23}-\Delta^{01}_{23})x_1x_3.
  \end{align*}
  Note that for any $\Delta, \Delta'\in W(F)^{\fkS_2}$, 
  \begin{equation}\label{eq:two-Frobs-commute}
    \Delta_{12}\Delta'_{13} = \Delta'_{23}\Delta_{12} = \Delta_{13}\Delta'_{23} = \Delta'_{12}\Delta_{13} = \Delta_{23}\Delta'_{12} = \Delta'_{13}\Delta_{23}.
  \end{equation}
  This simple observation takes care of comparing all coefficients except the two coefficients at $x_1$ and at $x_3$.
  For the remaining two coefficients to coincide, we need $\Delta^{00}_{23}\Delta^{11}_{12} = \Delta^{10}_{13}\Delta^{01}_{23}$ and $\Delta^{00}_{12}\Delta^{11}_{13} = \Delta^{01}_{13}\Delta^{10}_{23}$.
  This is precisely the condition \eqref{eq:A2} we imposed on $\beta$ in \cref{def:PQWP}.
\end{proof}

\subsection{Proof of \cref{prop:actionA}}
\begin{proof}%[{Proof of \cref{prop:actionA}}]
  For quadratic and wreath relations, the verification reduces to the case $d=n=2$.
  If $i_1=i_2$, the action of $H$ preserves $v_{\underline{i}}$.
  Dropping it from the notation, we have
  \begin{align*}
    P(Hb) & = \alphab\sigma(P)b + \rho(P)b
    \stackrel{\text{Leibniz}}{=} \alphab\sigma(P)b + \rho(\sigma(b)P) + P\rho(b)
    = P(\sigma(b)H+\rho(b));\\
    PH^2 & = (\alphab\sigma(P)+\rho(P))H = \alphab\sigma(\alphab) + \alphab(\sigma\rho+\rho\sigma)(P) + \rho^2(P)\\
    &\stackrel{\eqref{def:qu1}}{=} (\alphab\sigma(\alphab)-\alphab S)P + \alphab S \sigma(P) + S\rho(P)
    = PR + P(SH).
  \end{align*}
  If $i_1<i_2$, then $v_{\underline{i}} = v_{12}$, and so we have
  \begin{align*}
    v_{12}P(Hb) & = v_{21}\sigma(P)b + v_{12}\rho(P)(b)
    = v_{21}\sigma(P)b + v_{12}\rho(\sigma(b)P) + v_{12}P\rho(b)
    = v_{12}(\sigma(b)H+\rho(b));\\
    v_{12}PH^2 & = (v_{21}\sigma(P)+v_{12}\rho(P))H = v_{12}RP + v_{21}\rho\sigma(P) + v_{21} SP + v_{21}\sigma\rho(P) + v_{12}\rho^2(b)\\
    &= v_{12}RP + v_{12}S\rho(P) + v_{21} S\sigma(b) = v_{12}PR + v_{12}P(SH).
  \end{align*} 
  The case $i_2>i_1$ is checked in an analogous fashion.
  
  It remains to check the braid relations.
  It is clear from definition that $v_{\underline{i}}P H_i H_j = v_{\underline{i}}P H_j H_i$ when $|i-j|>1$. 
  Thus, it remains to check the cubic braid relation, for which we can assume that $d=n=3$.
  Using wreath relations, we can write
  \[
    (v_{\underline{i}}P)H_1H_2H_1 = v_{\underline{i}}H_1H_2H_1 (s_1s_2s_1)(P) + \sum\nolimits_{w < s_1s_2s_1} v_{\underline{i}} H_w P_w
  \]
  for some $P_w\in B^{\otimes 3}$ expressed in terms of $P$, $\sigma_i$ and $\rho_i$.
  One can similarly rewrite $(v_{\underline{i}}P)H_2H_1H_2$ as a sum of $v_{\underline{i}}H_2H_1H_2 (s_2s_1s_2)(P)$ and lower terms.
  One checks directly that, thanks to relations \eqref{def:br1}--\eqref{def:br3}, the lower terms coincide on the nose.
  Thus, it suffices to check the braid relation on vectors $v_{\underline{i}}$.
  In this case, \eqref{eq:mHi} simplifies to
  \begin{equation}\label{eq:mHiFundamental}
  v_{\underline{i}} H_k = \begin{cases}
    v_{\underline{i}\cdot s_k} &\tif i_k < i_{k+1};
  \\
   v_{\underline{i}\cdot s_k} \alphab_k   &\tif  i_k = i_{k+1};
  \\
  v_{\underline{i}\cdot s_k}R_k +v_{\underline{i}} S_k &\tif i_k > i_{k+1}.
  \end{cases}
  \end{equation}
  
  By \cite[Lemma 5.1]{elias2022diamond}, a minimal set of the rank three ambiguities corresponds to the following:
  \[
  \begin{split}
  &v_{(1,2,3)}(H_1H_1)H_2H_1 = v_{(1,2,3)}H_1(H_2H_1H_2),
  \\
  &v_{(1,2,3)}H_1H_2(H_1H_2 H_1) = v_{(1,2,3)}H_1(H_2^2)H_1H_2 .
  \end{split}
  \]
  That is, checking $v_{\underline{i}} H_1 H_2 H_1 = v_{\underline{i}} H_2 H_1 H_2$ can be reduced to checking it for $\underline{i} \in \{(1,3,2), (2,3,1)\}$ and for the case $1\leq i_k \leq 2$.
  
  By a direct computation, the equality $v_{(1,3,2)}H_1H_2H_1 = v_{(1,3,2)}H_2 H_1 H_2$ holds if and only if the coefficients of $v_{\underline{i}}$, $\underline{i}\in (1,2,3)\cdot \fkS_3$ on both sides coincide.
  This is equivalent to conditions \eqref{def:br4}--\eqref{def:br5}.
  By a similar computation, the equality $v_{(2,3,1)}H_1H_2H_1 = v_{(2,3,1)}H_2 H_1 H_2$ holds if and only the following hold, if $Y = S$ or $R$:
  \begin{equation}\label{eq:H12121}
  S_1\sigma_1\sigma_2(Y_1)+ \rho_1\sigma_2(Y_1) = \sigma_1(Y_2)S_1 + \rho_1(Y_2),
  \quad 
  R_1\sigma_2(Y_1) = \sigma_1(Y_2)R_1.
  \end{equation}
  The first equality of \eqref{eq:H12121} follows from~\eqref{def:qu1} and~\eqref{def:br4}.
  Namely, we have $\rho_1\sigma_2(Y_1) = 0$ and
  \[
  \begin{split}
  S_1\sigma_1\sigma_2(Y_1) &= l_{S_1}\sigma_1( \sigma_2(Y_1) ) = (r_{S_1}\sigma_1^2 + \rho_1\sigma_1 + \sigma_1\rho_1)(\sigma_2(Y_1))
  \\
  &= \sigma_2(Y_1)S_1 + \rho_1(\sigma_1\sigma_2(Y_1)) + \sigma_1(\rho_1\sigma_2(Y_1))
  = \sigma_2(Y_1)S_1 + \rho_1(Y_2) .
  \end{split}
  \]
  The second equality in~\eqref{eq:H12121} follows from centrality of $R$. 
  Finally, if $1\leq i_k \leq 2$, then any such case is a degenerate case of the corresponding rank three calculation since at least two of the tensor factors in $v_{\underline{i}}$ agree. 
  If all $i_k$'s are the same, then the braid relation holds if and only if 
  $\alphab_1 \alphab_{13} \alphab_{2} = \alphab_2 \alphab_{13} \alphab_1$, which holds thanks to~\eqref{eq:C2}. 
  For the most complicated degenerate case, it requires verifying that $v_{(2,2,1)} H_1 H_2 H_1 = v_{(2,2,1)} H_2 H_1 H_2$,
  Equivalently, it suffices to check that
  \begin{equation}
  R_1R_{13}\alphab_{2} = \alphab_2R_{13}R_{1},
  \quad
  S_1R_{13}\alphab_{2} = R_2\alphab_{13}S_{1},
  \quad
  \alphab_1 S_{13}\alphab_{2} = S_{13}R_{1} + S_2\alphab_{13}S_{1},
  \end{equation}
  which follows from combining the fact that $S$ is weak Frobenius and~\eqref{def:wr2}.
  This concludes the verification of braid relations.
\end{proof}

\printnomenclature[0.8in] 
\begingroup
\setstretch{0.88}
\bibliography{zot}{}
\bibliographystyle{alphaabbr}
\endgroup

\end{document}